\newtheorem{Thm}{Theorem}[section]
\newtheorem{Cor}[Thm]{Corollary}
\newtheorem{Prop}[Thm]{Proposition}
\newtheorem{Qu}[Thm]{Question}
\newtheorem{Lem}[Thm]{Lemma}
\newtheorem{Hyp}[Thm]{Hypotheses}
\newtheorem*{thma}{Theorem A}
\newtheorem*{thmb}{Theorem B}
\newtheorem*{thmc}{Theorem C}
\theoremstyle{definition}
\newtheorem{Def}[Thm]{Definition}
\newtheorem{Ex}[Thm]{Example}
\theoremstyle{remark}
\numberwithin{equation}{section}
\newcommand{\Aut}{\operatorname{Aut}}
\newcommand{\stab}{\operatorname{stab}}
\newcommand{\Id}{\operatorname{Id}}
\renewcommand{\dim}{\operatorname{dim}}
\newcommand{\D}{\mathcal{D}}
\newcommand{\De}{\mathcal{D}}
\newcommand{\Sym}{\operatorname{Sym}}
\newcommand{\Fi}{\operatorname{Fi}}
\newcommand{\supp}{\operatorname{supp}}
\newcommand{\Alt}{\operatorname{Alt}}
\newcommand{\PSL}{\operatorname{PSL}}
\newcommand{\PSU}{\operatorname{PSU}}
\newcommand{\GL}{\operatorname{GL}}
\newcommand{\Sp}{\operatorname{Sp}}
\renewcommand{\Gamma}{\varGamma}
\renewcommand{\epsilon}{\varepsilon}
\renewcommand{\leq}{\leqslant}
\renewcommand{\geq}{\geqslant}
\newcommand{\I}{\mathcal{I} }
\newcommand{\B}{\mathcal{B} }
\newcommand{\ep}{\epsilon}
\newcommand{\Z}{\mathbb{Z} }
\renewcommand{\B}{\mathcal{B}}
\renewcommand{\L}{\mathcal{L}}
\newcommand{\G}{\mathcal{G}}
\newcommand{\E}{\mathcal{E}}
\renewcommand{\L}{\mathcal{L}}
\newcommand{\C}{\mathcal{C}}
\newcommand{\Fb}{\mathbf{F}}
\newcommand{\mF}{\mathbb{F}}
\begin{document}


\title{Conway groupoids, regular two-graphs and supersimple designs}
 

\author{Nick Gill}
\address{Department of Mathematics, University of South Wales, Treforest, CF37 1DL, U.K.}
\email{nicholas.gill@southwales.ac.uk}
\author{Neil I. Gillespie}
\address{Heilbronn Institute for Mathematical Research, Department of Mathematics, University of Bristol, U.K.}
\email{neil.gillespie@bristol.ac.uk}
\author{Cheryl E. Praeger}
\address{Centre for the Mathematics of Symmetry and Computation, University of Western Australia, Australia}
\email{cheryl.praeger@uwa.edu.au}

\author{Jason Semeraro}
\address{Heilbronn Institute for Mathematical Research, Department of Mathematics, University of Bristol, U.K.}
\email{js13525@bristol.ac.uk}



\begin{abstract}
A $2-(n,4,\lambda)$ design $(\Omega, \B)$ is said to be supersimple if distinct lines intersect in at most two points. From such a design, one can construct a certain subset of $\Sym(\Omega)$ called a ``Conway groupoid''. The construction generalizes Conway's construction of the groupoid $M_{13}$. It turns out that several infinite families of groupoids arise in this way, some associated with 3-transposition groups, which have two additional properties. Firstly the set 
of collinear point-triples forms a  regular two-graph, and secondly the symmetric difference of two intersecting lines is again a line. In this paper, we show each of these properties corresponds to a group-theoretic property on the groupoid and we classify the Conway groupoids and the supersimple designs for which both of these two additional properties hold. 
\end{abstract}

\keywords{}

\subjclass[2010]{20B15, 20B25, 05B05}

\maketitle 

\section{Introduction}
In his famous paper \cite{Co1}, John Conway used a ``game'' played on  the projective plane ${\mathbb P}_3$ of order $3$  to construct the sporadic Mathieu group $M_{12}$, as well as a special subset of $\Sym(13)$ which he called $M_{13}$, and which could be endowed with the structure of a groupoid.

In recent work (\cite{conwaygroupoids1, conwaygroupoids}), Conway's construction has been generalized to geometries other than ${\mathbb P}_3$, namely to supersimple $2-(n,4,\lambda)$ designs. In this more general context, the analogue of $M_{13}$ is a subset of $\Sym(n)$ that is known as a {\it Conway groupoid}. The aim of this paper is to classify an infinite family of Conway groupoids with the remarkable property that they are subgroups of $\Sym(n)$. They also have links to regular two-graphs and to 3-transposition groups. 

In order to state our main results, we briefly review the definition of a Conway groupoid (full definitions and background can be found in \S\ref{s: background}): we start with a $2-(n,4,\lambda)$ design $\De:=(\Omega,\B)$ in which any two lines intersect in at most two points. We call such designs {\it supersimple}, and note that distinct points $a, b$ of $\De$ lie in $\lambda$ lines $\{a,b,a_i,b_i\}$ such that the points $a_1,\dots, a_\lambda, b_1,\dots, b_\lambda, \in \Omega$ are pairwise distinct. We associate with $\{a,b\}$ the permutation 
\begin{equation}\label{eq:ss}
[a,b] = (a,b) \prod\limits_{i=1}^\lambda (a_i,b_i),
\end{equation} 
of $\Sym(\Omega)$, which we call an {\it elementary move}; we also set $[a,a]=1$, the identity permutation, for each $a\in\Omega$.  For an arbitrary sequence of points $a_0,a_1,\dots, a_k\in \Omega$, we extend this notation and define the permutation
\[
[a_0,a_1,a_2,\ldots,a_k]:=[a_0,a_1][a_1,a_2]\cdots[a_{k-1},a_k],
\]
called a {\it move sequence}. For a 
point $\infty\in \Omega$, the subset
\begin{equation}\label{eq:cg}
 \L_\infty(\De):= \{ [\infty,a_1,a_2,\ldots,a_k] \mid k \in \Z^+, \infty ,a_1,\dots, a_k \in \Omega \}\subset \Sym(\Omega)
\end{equation} 
is called the \textit{Conway groupoid} associated with the point $\infty$, and the \textit{hole-stabilizer} associated with $\infty$ is defined as 
\begin{equation}\label{eq:hs}
\pi_\infty(\De):=  \{ [\infty,a_1,a_2,\ldots,a_{k-1}, \infty] \mid k \in \Z^+, \infty ,a_1,\dots, a_{k-1} \in \Omega\}.
\end{equation} 
It is an easy matter to check that the set $\pi_\infty(\De)$ forms a subgroup of $\Sym(\Omega\setminus\{\infty\})$ and, moreover, its isomorphism class as a permutation group does not depend on the choice of the point $\infty$.

By way of example, if we consider, as Conway did, the design of points and lines of the projective plane ${\mathbb P}_3$, then we obtain $ \L_\infty({\mathbb P}_3)=M_{13}$ and $\pi_\infty({\mathbb P}_3)=M_{12}$. 
In the search for other interesting Conway groupoids, two particularly interesting phenomena have arisen: firstly, it turns out that the Conway groupoid $\L_\infty(\De)$ is sometimes not just a subset of $\Sym(\Omega)$, but a subgroup; secondly, by considering the set of collinear point-triples of $\De$, one can sometimes associate with $\De$ the structure of a {\it regular two-graph} (see Definition~\ref{d:cohdef}).\footnote{According to the first sentence of \cite{taylor}, ``regular two-graphs were introduced by G.~Higman in his Oxford lectures as a means of studying Conway's sporadic simple group $\cdot 3$ in its doubly transitive representation of degree $276$.''} 
It turns out that these two properties, both separately and together, correspond to certain additional properties of the Conway groupoid, as our first main result makes clear:

\begin{thma}\label{t:l8gp}
Let $\De=(\Omega,\B)$ be a supersimple $2-(n,4,\lambda)$ with $n > 2\lambda+2$, and let $\C$ denote the set of all collinear triples of elements in $\Omega$.  Let $\infty \in \Omega$ and define $G:=\L_\infty(\De)$. Then the following hold.
\begin{itemize}
\item[(a)] If $G$ is a group then $G$ is primitive on $\Omega$.
\item[(b)] If $(\Omega,\C)$ is a regular two-graph then $\pi_\infty(\De)$ is transitive on $\Omega\setminus\{\infty\}$.
\item[(c)] If $(\Omega,\C)$ is a regular two-graph and $G$ is a group then $\pi_\infty(\De)$ is primitive on $\Omega\setminus\{\infty\}$.
\end{itemize}
\end{thma}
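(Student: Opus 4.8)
\emph{Preliminaries.} Two facts are used throughout. First, $G=\L_\infty(\De)$ is transitive on $\Omega$: since $\De$ is a $2$-design it is connected, so some move sequence based at $\infty$ carries $\infty$ to any prescribed point. Second, if $G$ happens to be a group then it contains \emph{every} elementary move, because $[a,b]=[\infty,a]\,[\infty,a,b]$ with both factors in $G$, and in that case $\pi_\infty(\De)=G_\infty$; thus, when $G$ is a group, $\pi_\infty(\De)$ is transitive on $\Omega\setminus\{\infty\}$ exactly when $G$ is $2$-transitive on $\Omega$. In all three parts the engine is a block analysis powered by the fact that $[a,b]$ is an involution whose support has size $2(\lambda+1)<n$.

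\emph{Part (a).} Suppose $G$ is a group and let $\Gamma$ be a block of imprimitivity with $|\Gamma|\geq 2$. For $a\in\Gamma$ and $b\notin\Gamma$ the involution $[a,b]$ interchanges $a$ and $b$, so it carries $\Gamma$ to a block disjoint from it; hence $[a,b]$ moves every point of $\Gamma$, forcing all of $\Gamma$ onto lines through $a$ and $b$. Applying the same to the block of $b$ places $\Gamma$ together with that block inside the set of $2\lambda+2$ points lying on lines through $\{a,b\}$, so $2|\Gamma|\leq 2\lambda+2$; since $n>2\lambda+2$ this gives $|\Gamma|\leq\lambda+1$ and at least three blocks. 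The same style of argument with $a,b$ in the \emph{same} block shows that the number of lines through a pair of points of $\Gamma$ that lie wholly inside $\Gamma$ is a constant $k$, so $(\Gamma,\B_\Gamma)$ is a supersimple subdesign and $n=|\Gamma|+2(\lambda-k)$. One then determines the possible ways a line of $\De$ can meet the block system: because $[p,q]$ permutes the blocks and fixes every line through $p$ and $q$, an injectivity argument on the induced permutation of the blocks excludes the patterns $3\,|\,1$, $2\,|\,1\,|\,1$ and $1\,|\,1\,|\,1\,|\,1$, so every line either lies in a single block or meets exactly two blocks in two points each. A double count of point-pair/line incidences---carried out inside each block and across each pair of blocks---then pins the parameters down, forcing $|\Gamma|=\lambda+1$ and at most two blocks, contradicting that there are at least three. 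I expect the classification of line patterns together with the bookkeeping in the double count to be the most delicate step, and the small cases $k=0$ and $|\Gamma|$ small need separate checking.

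\emph{Part (b).} Here $G$ is not assumed to be a group, so one works directly inside $\pi_\infty(\De)$. The key observation is that whenever $\{\infty,a,b\}$ is \emph{not} a collinear triple, the move sequence $[\infty,a,b,\infty]=[\infty,a]\,[a,b]\,[b,\infty]$ lies in $\pi_\infty(\De)$ and sends $a$ to $b$: indeed $[a,b]$ fixes $\infty$, since $\infty$ lies on no line through $a$ and $b$. Hence every orbit of $\pi_\infty(\De)$ on $\Omega\setminus\{\infty\}$ is a union of connected components of $\overline{\Delta}$, the complement of the descendant graph $\Delta$ of the two-graph $(\Omega,\C)$ at $\infty$ (with $x\sim y$ in $\Delta$ iff $\{\infty,x,y\}\in\C$). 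As $(\Omega,\C)$ is a regular two-graph, $\Delta$ is strongly regular (see \cite{taylor}), and therefore $\overline{\Delta}$ is connected---whence $\pi_\infty(\De)$ is transitive---unless $\Delta$ is complete multipartite. The residual, degenerate case is the main obstacle: it should be eliminated either by showing it cannot arise from a supersimple $2$-design with $n>2\lambda+2$, or by splicing together longer move sequences that cross between the multipartite classes.

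\emph{Part (c).} Now assume $G$ is a group and $(\Omega,\C)$ a regular two-graph. By (a), $G$ is primitive on $\Omega$; by (b), $\pi_\infty(\De)=G_\infty$ is transitive on $\Omega\setminus\{\infty\}$; so $G$ is $2$-transitive on $\Omega$. I would first record that, since $\C$ is a two-graph, each elementary move---and hence every element of $G$---is an automorphism of $(\Omega,\C)$, so $G\leq\Aut(\Omega,\C)$ and $G_\infty$ preserves the strongly regular descendant $\Delta$ of part (b). Suppose, for a contradiction, that $\Sigma\subseteq\Omega\setminus\{\infty\}$ is a block of imprimitivity for $G_\infty$ with $2\leq|\Sigma|<n-1$. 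A standard argument shows the translates $\{\,g(\{\infty\}\cup\Sigma)\mid g\in G\,\}$ form the block set of a $G$-invariant $2$-design $\De'$ on $\Omega$ of block size $|\Sigma|+1$. Since each $G_\infty$-block is an equitable set in the strongly regular graph $\Delta$ and $G$ is $2$-transitive, one obtains numerical constraints on $|\Sigma|$, and feeding these into the relation between $\De$ and $\De'$ via the two-graph axiom---every four points of $\Omega$ carry an even number of collinear triples---forces $|\Sigma|\in\{1,n-1\}$, a contradiction. This last step, where primitivity of $G$, the two-graph axiom and the strong regularity of $\Delta$ are brought to bear together, is the crux of (c).
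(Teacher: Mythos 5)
Your proposal is a plan rather than a proof: each part stops short at precisely the step that does the work, and in parts (a) and (b) the missing steps are not routine. In part (a), your opening observation (a block $\Gamma$ and the block of any $b\notin\Gamma$ both lie inside $\overline{a,b}$, so $|\Gamma|\leq\lambda+1$) is correct, but the ensuing programme is both unexecuted and partly wrong: the ``injectivity argument'' does not exclude the pattern $1\,|\,1\,|\,1\,|\,1$. For a line $\{w,x,y,z\}$ meeting four distinct blocks, every elementary move $[p,q]$ with $\{p,q\}\subset\{w,x,y,z\}$ induces a product of two disjoint transpositions on those four blocks, which is entirely consistent; only $3\,|\,1$ and $2\,|\,1\,|\,1$ are killed by that argument. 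The alleged constancy of the number of interior lines, the subdesign structure, and the concluding double count are asserted, not proved, and you yourself flag them as needing checking. The paper's proof of (a) goes the opposite way: it shows the block $\Delta$ containing $\infty$ must absorb every point \emph{outside} $\overline{\infty,a_2}$ (because $[\infty,y]$ fixes $a_2\in\Delta$ and sends $\infty$ to $y$), so $|\Delta|\geq n-2\lambda$ exceeds the number of fixed points of any elementary move; a third block then yields two incompatible images of $\Delta$ under a single move $[\infty,b']$ built from a line $\{\infty,b,c',b'\}$. In part (b), your reduction to connected components of the complement of the descendant graph is sound, but the complete multipartite case --- which you correctly identify as the obstacle --- is left open, so transitivity is not established. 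The paper avoids this entirely: it shows each $\pi_\infty(\De)$-orbit contains a full set $\overline{a,b}$ of size $2\lambda+2$ (using the two-graph parity on $\{\infty,a,b,c\}$ to move $a$ to each $c\in\overline{a,b}$ via $[\infty,a,c,\infty]$ or $[\infty,b,c,\infty]$), so two orbits are impossible when $n\leq 4\lambda+4$, and for $n>4\lambda+4$ the explicit element $[\infty,a,w,\infty,w,b,\infty]$ with $w\notin\overline{a,\infty}\cup\overline{b,\infty}$ finishes.

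In part (c) you explicitly defer ``the crux'': no argument is given that the proposed numerical constraints force $|\Sigma|\in\{1,n-1\}$, and the auxiliary $2$-design $\De'$ and the equitable-partition constraints are never pinned down. The paper's argument is much more direct and uses no strongly-regular-graph machinery: for $a,b$ in a common block, it splits on whether $\infty\in\overline{a,b}$. If not, it shows the block swallows all of $\overline{a,b}$, so $k\geq2\lambda+2$; since $n$ is even (Corollary~\ref{c:sizen}) the number of blocks $m=(n-1)/k$ is odd, hence $m\geq3$, giving $6\lambda<km<n\leq6\lambda$, a contradiction. If $\infty\in\overline{a,b}$ for every such pair, a point $c\notin\overline{a,b}$ is forced (via $[a,c]\in\pi_\infty(\De)$ fixing $b$) to satisfy $\infty\in\overline{a,c}$ and $\infty\in\overline{b,c}$, so $\{\infty,a,b,c\}$ carries exactly three collinear triples, violating the two-graph axiom. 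As it stands your submission establishes none of (a), (b), (c); the preliminary reductions are fine, but every part needs its main step supplied, and in (a) the supplied sketch contains a false exclusion.
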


We remark that the condition $n>2\lambda+2$ is stated only for convenience. Any $2-(n,4,\lambda)$ design automatically satisfies $n\geq 2\lambda+2$; furthermore full information concerning the Conway groupoids corresponding to supersimple $2-(n,4,\lambda)$ designs with $n=2\lambda+2$ is given by Proposition~\ref{p: small lambda}.

Our other main results concern Conway groupoids satisfying the conditions in part (c), together with the following additional property on the supersimple design $\De=(\Omega, \B)$: 

\begin{equation}\label{e:symdiff}
\textrm{if }B_1,B_2 \in \B \textrm{ such that } |B_1 \cap B_2|=2, \textrm{ then } B_1 \triangle B_2 \in \B \tag{$\triangle$}
\end{equation}
where $B_1\triangle B_2$ denotes the {\it symmetric difference} of the lines $B_1$ and $B_2$. (Observe that the condition $ |B_1 \cap B_2|=2$ implies  that $|B_1 \triangle B_2|=4$.) Our next result gives a classification of such groupoids. Its statement refers to elementary moves defined in \eqref{eq:ss}, and also mentions 3-transposition groups, which are defined in Definition \ref{d:3trans}.

\begin{thmb}
Let $\De=(\Omega,\B)$ be a supersimple $2-(n,4,\lambda)$ design that satisfies  $\eqref{e:symdiff}$,
and let $\E$ be the set of elementary moves on $\De$. 
Let $\C$ be the set of collinear triples in $\B$ and suppose that $(\Omega, \C)$ is a regular two-graph.   
Then, for $\infty \in \Omega$, $(\L_\infty(\De), \E)$ is a $3$-transposition group and, for some positive integer $m$, one of the following holds:
\begin{itemize}
\item[(a)] $n=2^m$ and $\L_\infty(\De) \cong (\mathbb{F}_2)^m$;
\item[(b)] $n=2^{m-1}(2^m \pm 1)$ and $\L_\infty(\De) \cong \Sp_{2m}(2)$;
\item[(c)] $n=2^{2m}$ and $\L_\infty(\De) \cong 2^{2m}.\Sp_{2m}(2)$.
\end{itemize}
\end{thmb}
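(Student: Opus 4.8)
\emph{Stage 1: a linear model.} The plan is to route the problem through Fischer's structure theory of $3$-transposition groups. The first task is to realise $\De$ inside an $\mathbb{F}_2$-vector space: using \eqref{e:symdiff} together with the regular two-graph hypothesis, one produces an injective map $\iota\colon\Omega\hookrightarrow V$ (for instance starting from $V:=\mathbb{F}_2^{\Omega}$ modulo the span of the characteristic vectors $\sum_{x\in B}e_x$ of the lines) for which the lines of $\De$ are \emph{exactly} the zero-sum $4$-subsets of $\iota(\Omega)$. Identifying $\Omega$ with $\iota(\Omega)$, it follows from \eqref{eq:ss} that each elementary move $[a,b]$ is the \emph{partial translation} in direction $a+b$, sending $x\mapsto x+(a+b)$ whenever $x+(a+b)\in\Omega$ and fixing $x$ otherwise; in particular $[a,b]$ depends only on $a+b$. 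A move sequence is then a composite of such partial translations, and from this description one deduces three things: that $\L_\infty(\De)$ is closed under composition --- hence a subgroup of $\Sym(\Omega)$; that it preserves the line set of $\De$, i.e.\ $\L_\infty(\De)\leq\Aut(\De)$ (here the two-graph axiom is the key input: applied to $4$-sets of the form $\{p,q,r,p+w\}$ it forces the number of a line's points lying in the support of a given partial translation to be even); and finally that $[a,b]=[\infty,a][\infty,a,b]\in\L_\infty(\De)$, so $\E\subseteq\L_\infty(\De)$.

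\emph{Stage 2: the $3$-transposition axioms.} Each member of $\E$ is an involution by \eqref{eq:ss}; $\E$ generates $\L_\infty(\De)$ because every move sequence is a product of elementary moves; and $\E$ is invariant under conjugation by $\L_\infty(\De)$ since $\L_\infty(\De)\leq\Aut(\De)$ (Stage 1) gives $g[a,b]g^{-1}=[g(a),g(b)]$ --- transitivity of $\L_\infty(\De)$ on $\Omega$ (immediate from the definition of a move sequence) then shows $\E$ is a single conjugacy class in the non-abelian cases. It remains to bound $|d_1d_2|$ for $d_1=[a,b]$, $d_2=[c,d]$ in $\E$: using the partial-translation description, if the supports are disjoint the two commute; if $\{a,b,c,d\}$ contains a line (equivalently the directions $a+b$, $c+d$ span a $2$-flat inside $\Omega$) a short explicit computation gives order $2$ or $3$; and in the remaining configurations the regular two-graph axiom rules out order exceeding $3$. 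Hence $(\L_\infty(\De),\E)$ is a $3$-transposition group.

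\emph{Stage 3: pinning down the group.} If $n=2\lambda+2$ quote Proposition~\ref{p: small lambda}, which yields case~(a), with $\L_\infty(\De)\cong(\mathbb{F}_2)^m$ and $n=2^m$. Otherwise $n>2\lambda+2$, so Theorem~A(a),(c) give that $\L_\infty(\De)$ and $\pi_\infty(\De)$ act primitively, and each $d\in\E$ moves exactly $2\lambda+2<n$ points. Now apply the classification of $3$-transposition groups (Fischer; Cuypers--Hall): modulo its solvable radical, $\L_\infty(\De)$ is $\Sym(k)$, $\Sp_{2m}(2)$, $\Oo^\pm_{2m}(2)$, a unitary group $\PSU_k(2)$, an $\mathbb{F}_3$-type orthogonal group, or a Fischer group $\Fi_{22},\Fi_{23},\Fi_{24}$. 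Those whose natural $3$-transposition module lives over a field other than $\mathbb{F}_2$ are incompatible with the model of Stage~1; for the remaining, $\mathbb{F}_2$-defined possibilities transcribe the two-graph axiom into the model, where it says that for every nonzero $w\in V$ the function $x\mapsto\mathbf{1}_{\Omega}(x)+\mathbf{1}_{\Omega}(x+w)$ has even weight on every $4$-subset of $\Omega$ with vector sum $w$. Combined with the $2$-design condition, this ``quadratic-type'' regularity forces $(\Omega,\B)$ to be affinely equivalent to the configuration underlying (b) or (c), whence $\L_\infty(\De)\cong\Sp_{2m}(2)$ or $\L_\infty(\De)\cong 2^{2m}.\Sp_{2m}(2)$. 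In the former case the primitive action on $\Omega$ must be one of the two actions of $\Sp_{2m}(2)$ on the classes of quadratic forms of a fixed Arf invariant polarising to a given symplectic form, so $n=2^{m-1}(2^m\pm1)$; in the latter it is the action on the natural module, so $n=2^{2m}$. Counting the lines through a fixed pair of points then determines $\lambda$ in each case.

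\emph{The main obstacle.} Stages~1 and~2 are essentially bookkeeping once the model is in hand; the substance, and the hardest point, is the elimination step inside Stage~3 --- specifically ruling out the $\mathbb{F}_2$-orthogonal groups $\Oo^\pm_{2m}(2)$ and the exceptional small symmetric groups (beyond $\Sym(3)\cong\Sp_2(2)$ and $\Sym(6)\cong\Sp_4(2)$), which cannot be discarded on the grounds of the coefficient field. I would try to make the ``quadratic-type regularity'' reformulation of the two-graph axiom do the whole job, reconstructing the (possibly degenerate) symplectic form directly rather than checking Fischer's list group by group; carrying this out, and in particular identifying the normal $2$-subgroup that appears in case~(c), is where the real work lies. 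A further delicate point is the first step itself --- producing the faithful $\mathbb{F}_2$-model and deducing that $\L_\infty(\De)$ is a group --- which genuinely needs \eqref{e:symdiff} \emph{and} the two-graph hypothesis together: for the projective plane ${\mathbb P}_3$, condition \eqref{e:symdiff} holds vacuously yet $\L_\infty=M_{13}$ is not a group and the collinear triples do not form a two-graph.
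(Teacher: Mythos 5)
Your overall strategy (reduce to Fischer's classification of $3$-transposition groups, with Proposition~\ref{p: small lambda} handling $n=2\lambda+2$ and Theorem~A supplying $2$-primitivity) matches the paper's in outline, but the proposal has a genuine gap at its foundation. Stage~1 asserts that one can ``produce'' an injection $\iota\colon\Omega\hookrightarrow V$ into an $\mathbb{F}_2$-space under which the lines become \emph{exactly} the zero-sum $4$-subsets of $\iota(\Omega)$. This is not a preliminary normalisation: it is essentially the full strength of Theorem~C (note that for $\De^a$ the zero-sum condition in $V=\mathbb{F}_2^{2m}$ does \emph{not} characterise the lines --- one needs the extra coordinate $\theta(v)$ --- so even verifying your claim on the known examples requires choosing the embedding carefully). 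For your universal candidate $V=\mathbb{F}_2^{\Omega}$ modulo the span of the line indicators, neither injectivity of $\iota$ nor the converse implication (zero-sum $4$-set $\Rightarrow$ line) is established, and no argument is offered; everything downstream, including the ``partial translation'' description of elementary moves and the closure of $\L_\infty(\De)$ under composition, depends on it. The paper deliberately avoids any such model: the $3$-transposition property is proved purely combinatorially via Lemma~\ref{l:sympeq} (the conjugation formula $[y,z][x,y][y,z]\in\{[x,y],[x,z]\}$, whose case analysis is where \eqref{e:symdiff} and the two-graph axiom actually do the work), together with Lemmas~\ref{l:xrep}, \ref{l:braid} and \ref{l:conj}; that $G$ is a group then follows from Lemma~\ref{l:lgrp2}, not from a translation model.

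The second gap is the elimination step in Stage~3. Discarding the orthogonal, unitary, $\mathbb{F}_3$-type and Fischer possibilities on the grounds that their ``natural $3$-transposition module lives over the wrong field'' is not an argument: the group acts on $\Omega$, not on its natural module, and nothing a priori ties the two together. The paper does this work concretely: Lemma~\ref{l:ecount} gives $|\E|=n(n-1)/(2(\lambda+1))$, Corollary~\ref{c:sizen} gives $2\lambda+2<n\leq 6\lambda$ with $n$ even, and these are combined with Bannai's theorem, the classification of $2$-transitive actions of Chevalley groups, the minimal permutation degrees of the Fischer groups, and (in the affine case) primitive prime divisors together with lower bounds on $\mathbb{F}_2$-module dimensions, to leave only $\Sp_{2m}(2)$ and $2^{2m}.\Sp_{2m}(2)$. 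You correctly flag that identifying the normal $2$-subgroup in case~(c) is ``where the real work lies,'' but that work --- and the whole of Stage~1 --- is exactly what is missing from the proposal.
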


Our proof of Theorem B is independent of the Classification of Finite Simple Groups (CFSG); let us briefly explain our approach: we remarked above that when $n=2\lambda+2$, Proposition~\ref{p: small lambda} gives full information and, in particular, it implies that Theorem B, part (a) holds. In addition, one can prove fairly easily that the assumptions of Theorem B imply that $\L_\infty(\De)$ is a group (Lemma~\ref{l:lgrp}). For the situation where $n>2\lambda+2$ we now apply Theorem A, part (c) to conclude that $\L_\infty(\De)$ is 2-primitive on $\Omega$. 

At this point, if were we happy to use CFSG, we could invoke Taylor's classification~\cite{taylor2} of 2-transitive regular two-graphs. However we prefer to avoid reliance on the simple group classification, and instead we apply  Fischer's classification of finite 3-transposition groups; this is done in \S\ref{s: thmb}.

%

It is natural to ask at this point whether all of the possibilities for $\L_\infty(\De)$ that are listed in Theorem B can occur. The answer to this question is ``yes'' and we now present three families of designs that demostrate this. These families will also be of central importance in our final major result, Theorem C, below.

\begin{Ex}\label{exboolean}
 The \textit{Boolean quadruple system of order $2^m$}, where $m\geq2$, is the design $\De^b=(\Omega^b,\B^b)$ such that $\Omega^b$ is identified with the set of vectors in $\mathbb{F}_2^m$, and 
 \[
 \B^b:=\{\{v_1,v_2,v_3,v_4\} \mid v_i \in \Omega^b \mbox{ and } \sum_{i=1}^4 v_i = \textbf{0}\}.
 \]
Equivalently, we can define 
\[
\B^b=\{v+W \mid v\in \Omega^b, W\leq \mathbb{F}_2^m, \dim(W)=2\};
\]
that is, $\B^b$ is the set of all affine subplanes of $\Omega^b$.
It is easy to see that $\De$ is both a $3$-$(2^m,4,1)$ Steiner quadruple system and a supersimple $2$-$(2^m,4,2^{m-1}-1)$ design. Moreover, $\De^b$ satisfies property \eqref{e:symdiff} (see Lemma~\ref{l: boolean properties}). In what follows, we will often make statements like ``$\De$ is a Boolean system'' to mean that $\De$ is a Boolean quadruple system of order $2^m$ for some integer $m\geq 2$.
\end{Ex}

To describe the other two families, we need the following set-up: Let $m \geq 2$ and $V:=(\mathbb{F}_2)^{2m}$ 
be a vector space equipped with the standard basis. Define 
\begin{equation}\label{e:form}
e:=\begin{pmatrix}0_m&I_m\\0_m&0_m\end{pmatrix}, \qquad f:=\begin{pmatrix}0_m&I_m\\I_m&0_m\end{pmatrix}=e+e^T,
\end{equation}
where $I_m$ and $0_m$ represent the $m \times m$ identity and zero matrices respectively. Write elements of $V$ as 
row vectors and define $\varphi(u,v)$ as the alternating bilinear form  $\varphi(u,v):=ufv^T$. Also, 
write $\theta(u):=ueu^T \in \mathbb{F}_2$, so that 
$$
\theta(u+v)+\theta(u)+\theta(v)=\varphi(u,v).
$$ 
(Note that the right hand side equals $u e v^T + v e u^T$ while the left hand side is $u(e+e^T)v^T$.) 
Finally, for each $v \in V$ define $\theta_v(u):=\theta(u)+\varphi(u,v)$, and note that $\theta_0=\theta$.

\begin{Ex}\label{ex2}
Let $\De^a=(\Omega^a,\B^a)$, where $\Omega^a:=V$ and 
$$
\B^a:=\{\{v_1,v_2,v_3,v_4\} \mid v_1,v_2,v_3,v_4 \in \Omega^a, \sum_{i=1}^4 v_i=\textbf{0}, \sum_{i=1}^4 \theta(v_i)=0 \}.
$$ 
By \cite[Theorem B]{conwaygroupoids}, $\L_\infty(\De^a) \cong 2^{2m}.\Sp_{2m}(2)$, while $\pi_\infty(\De^a) \cong \Sp_{2m}(2)$. Indeed, taking $\infty$ to be the zero vector in $V$, it turns out that  $\pi_\infty(\De^a)={\rm Isom(V,\varphi)}$, the isometry group 
 of the formed space $(V, \varphi)$.
 \end{Ex}

\begin{Ex}\label{ex1}
For $\ep \in \mathbb{F}_2$, let $\De^\ep=(\Omega^\ep,\B^\ep)$, where $\Omega^\ep:=\{\theta_v  \mid v \in V, \theta(v)=\ep\}$ and  
$$
\B^\ep:=\{\{\theta_{v_1},\theta_{v_2},\theta_{v_3},\theta_{v_4}\} \mid \theta_{v_1},\theta_{v_2},\theta_{v_3},\theta_{v_4} \in \Omega^\ep, \sum_{i=1}^4 v_i= \textbf{0} \}.
$$
By \cite[Theorem B]{conwaygroupoids}, $\L_\infty(\De^\ep) \cong \Sp_{2m}(2)$, the isometry group of $\varphi$, while 
$\pi_\infty(\De^\ep) \cong \rm{O}_{2m}^{\ep'}(2)$, where $\ep'=\pm$ and $\ep=(1-\ep'1)/2$ (as an integer in $\{0,1\})$. \end{Ex}

Each of the designs in  Examples~\ref{exboolean}--\ref{ex1} satisfies property \eqref{e:symdiff} and their collinear triples form the lines of a regular two-graph (see Lemmas~\ref{l: boolean properties} and \ref{l: symp properties}), thereby proving that the three outcomes listed in Theorem B really occur.
Our final main result asserts that, in fact, the examples just given are the only designs that satisfy the assumptions of Theorem B.

\begin{thmc}
Let $\De=(\Omega,\B)$ be a supersimple $2-(n,4,\lambda)$ design that satisfies $\eqref{e:symdiff}$. 
Let $\C$ be the set of collinear triples in $\B$ and suppose that $(\Omega, \C)$ is a regular 
two-graph. Then there is an integer $m$ such that $\De$ is a ${2-(f(m),4,f(m-1)-1)}$ design, where 
one of the following holds:
\begin{itemize}
\item[(a)] $f(m)=2^m$ and $\De=\De^b$ as in Example~$\ref{exboolean}$;
\item[(b)] $f(m)=2^{2m}$ and $\De=\De^a$ as in Example $\ref{ex2}$;
\item[(c)] $f(m)=2^{m-1}(2^m + \ep'.1)$ and  $\De=\De^\ep$ is isomorphic to one of the designs in Example $\ref{ex1}$.
\end{itemize}
\end{thmc}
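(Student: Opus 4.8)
The plan is to bootstrap from Theorem B: under the stated hypotheses, Theorem B already identifies the isomorphism type of $L:=\L_\infty(\De)$ as one of $(\mathbb{F}_2)^m$, $\Sp_{2m}(2)$, or $2^{2m}.\Sp_{2m}(2)$, together with the value of $n=|\Omega|$. So the real content of Theorem C is a \emph{rigidity} statement: once we know the group and the degree, the design $\De$ is forced to be (isomorphic to) the corresponding member of Examples~\ref{exboolean}--\ref{ex1}. I would organise the argument around the action of $L$ on $\Omega$ and the combinatorial data that can be recovered from it. The key observation is that the line set $\B$ can be reconstructed group-theoretically: an elementary move $[a,b]\in\E$ is determined by the pair $\{a,b\}$, and the lines through $\{a,b\}$ are encoded in the cycle structure of $[a,b]$; moreover property \eqref{e:symdiff} says precisely that the set of transpositions appearing across all elementary moves is closed in a way that matches the root-involution geometry of the $3$-transposition group $(L,\E)$. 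Thus $\B$ is recoverable from the conjugacy class $\E$ of $3$-transpositions inside the abstract group $L$ acting on $\Omega$, and the problem reduces to understanding this action.

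The first step is to pin down the point set $\Omega$ as an $L$-set. In case (a), $L\cong(\mathbb{F}_2)^m$ is regular on $\Omega$ (this follows from Proposition~\ref{p: small lambda}, since here $n=2\lambda+2=2^m$), so $\Omega$ is identified with $\mathbb{F}_2^m$ canonically up to translation, and one checks directly that the elementary moves force $\B=\B^b$. In cases (b) and (c) we have $n>2\lambda+2$, and Theorem A(c) gives that $\pi_\infty(\De)$ is primitive on $\Omega\setminus\{\infty\}$, while $L$ is $2$-transitive on $\Omega$ (this is how Theorem B is proved). The strategy is then to identify the point stabiliser: in case (c), $L\cong 2^{2m}.\Sp_{2m}(2)$ has a unique conjugacy class of subgroups of index $2^{2m}$, realised as a point stabiliser only for the natural affine action on $V=\mathbb{F}_2^{2m}$, which identifies $\Omega$ with $V$; in case (b), $L\cong \Sp_{2m}(2)$ acting $2$-transitively on $n=2^{m-1}(2^m\pm1)$ points must be the action on one of the two classes of quadratic forms polarising to $\varphi$, i.e. on $\Omega^\ep$. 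These identifications are essentially the well-known classifications of the relevant $2$-transitive actions, but — to keep the proof CFSG-free, consistent with the stated philosophy — I would derive what is needed from Fischer's theorem and the structure of the $3$-transposition class $\E$ rather than quoting the list of $2$-transitive groups: the transpositions act on $\Omega$ as specific "reflections", and their fixed-point sets are affine hyperplanes (case (c)) or the two orbits correspond to $\theta(v)=0,1$ (case (b)), pinning the coordinatisation.

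With $\Omega$ identified as the appropriate $L$-set, the final step is to show the line set matches. Having coordinatised $\Omega$, each elementary move $[a,b]$ is a known permutation of $V$ (a product of transpositions supported on the points paired by the reflections associated with $\{a,b\}$), and its non-trivial $2$-cycles are exactly the pairs $\{a_i,b_i\}$ completing $\{a,b\}$ to a line. Reading off these pairs recovers $\B$ as the set of $4$-subsets summing to zero with the appropriate $\theta$-constraint, i.e. $\B=\B^a$ in case (b-type coordinates for (c)) or $\B=\B^\ep$; here one uses \eqref{e:symdiff} to guarantee that \emph{all} lines through $\{a,b\}$, not merely some, are accounted for, so that $\lambda$ has the claimed value $f(m-1)-1$ and the design is exactly, not merely contained in, the example. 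I expect the main obstacle to be the middle step: establishing, without invoking CFSG, that the abstract isomorphism type of $L$ from Theorem B together with the degree $n$ forces the action on $\Omega$ to be the specific natural one — in other words, ruling out exotic $2$-transitive actions of $\Sp_{2m}(2)$ and of $2^{2m}.\Sp_{2m}(2)$ of the given degree using only the internal $3$-transposition geometry $(L,\E)$ and the primitivity of $\pi_\infty(\De)$. This is where the bulk of the case-analysis will live, and it is plausible that for small $m$ a handful of degenerate coincidences (e.g. $\Sp_4(2)\cong\Sym_6$, or small-degree accidents) must be checked separately.
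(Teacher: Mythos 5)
Your route is genuinely different from the paper's, and it is worth saying up front that the paper deliberately avoids it: the authors state that their proof of Theorem C is \emph{entirely independent} of Theorem B, and they remark that while the two theorems are in fact equivalent, the implication ``Theorem B $\Rightarrow$ Theorem C'' is ``slightly more difficult and is not presented here.'' What the paper does instead is geometric: for $n>2\lambda+2$ it forms the derived graph $\G_{\De,\infty}$ on $\Omega\setminus\{\infty\}$ (vertices adjacent when collinear with $\infty$), proves in Proposition~\ref{p:red} that Hypotheses~\ref{hyp4.1} force this graph to have the strong triangle property (this is where \eqref{e:symdiff} and the regular two-graph condition do their work), invokes Seidel's classification (Theorem~\ref{t: seidel}) of graphs with the triangle property, and then uses Lemma~\ref{l: reconstruct} to show that the derived graph determines the design up to isomorphism, so that $\De$ must be $\De^a$ or $\De^\ep$; the case $n=2\lambda+2$ is handled separately by Proposition~\ref{p: small lambda}, giving $\De^b$. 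No group recognition is needed at any point.

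The problem with your proposal as written is that its central step is announced rather than proved. Everything hinges on showing, without CFSG, that the abstract isomorphism type of $L$ together with the degree $n$ forces the permutation action on $\Omega$ to be the natural one (point stabiliser ${\rm O}^{\pm}_{2m}(2)$ in $\Sp_{2m}(2)$, respectively the affine action of $2^{2m}.\Sp_{2m}(2)$), \emph{and} that the identification can be chosen so that the class $\E$ of elementary moves maps onto the transvection class compatibly with the coordinatisation of $\Omega$. You explicitly defer this (``this is where the bulk of the case-analysis will live''), but it is precisely the content of the theorem: ruling out exotic $2$-transitive actions of the right degree, handling the coincidences such as $\Sp_4(2)\cong\Sym(6)$ with its two classes of index-$6$ subgroups swapped by the outer automorphism, and then verifying that reading off the $2$-cycles of each transvection really reproduces \emph{all} of $\B^a$ or $\B^\ep$ and nothing else. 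None of these is carried out, and the first in particular is not obviously easier than the theorem itself if one insists on avoiding the classification of $2$-transitive groups. So while the strategy is viable in principle (the authors concede the implication holds), the proposal has a genuine gap where the paper instead substitutes Seidel's theorem and the reconstruction lemma.
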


Our proof of Theorem C is entirely independent of Theorem B; indeed the whole approach to the proof is different from that for Theorem B because we use the theory of \textit{polar spaces}. More precisely, we prove in Proposition~\ref{p:red} that, for any point $\infty \in \Omega$, the assumptions of Theorem B (along with the condition $n>2\lambda+2$) imply that $(\Omega\backslash\{\infty\}, \C_{\infty})$ is a polar space in the sense of Buekenhout--Shult. (Here $\C_\infty$ is the set of all triples of points in $\Omega \backslash \{\infty\}$ which occur in a line with $\infty$.) 
In fact, the polar space $(\Omega\backslash\{\infty\}, \C_\infty)$ has the extra property that all lines in the space contain exactly three points. Such polar spaces were characterized in a special case by Shult \cite{Sh} and then later, in full generality, by Seidel \cite{Seidel}.\footnote{Note that \cite{Seidel} is an internal university report; it can be accessed in the volume of Seidel's selected works \cite{Se}.} We use the result of Seidel in \S\ref{s: thmc} to give a fairly short proof of Theorem C; our presentation of Seidel's result in that section (Theorem~\ref{t: seidel}) is couched in graph-theoretic terminology.

It is natural to ask about the connection between Theorems B and C. Although the two proofs given in this paper are independent of one another, the two theorems are in fact equivalent. That Theorem C implies Theorem B is an easy consequence of \cite[Theorem B]{conwaygroupoids}; the reverse implication is slightly more difficult and is not presented here.

We have chosen to give two proofs because we believe that the different approaches (one algebraic, one geometric) shed complementary light on the set-up being studied here. What is more, while a proof of Theorem B that goes via Theorem C appears somewhat shorter, an approach which goes via Theorem A (and hence a \textit{group-theoretic} analysis of $\L_\infty(\De)$) is likely to be applicable in more general contexts (see Question \ref{p:prob2} below). 

\subsection{Context and open problems} 

As we mentioned above, the study of Conway groupoids was inspired by Conway's construction of $M_{13}$ in \cite{Co1}. This was generalized in \cite{conwaygroupoids1} to the context of supersimple $2-(n,4,\lambda)$ designs, and a classification programme for such groupoids was initiated in that paper and continued in \cite{conwaygroupoids}. Theorems~B and C may be regarded as contributions to this programme.
With this classification problem in mind, several questions arise. 

\begin{Qu}\label{p: extension}
Can Theorems B and C be extended to cover the situation where \eqref{e:symdiff} does not hold?
\end{Qu}

Our current state of knowledge about Conway groupoids suggests that $M_{13}$ is particularly special. To see this, we define a Conway groupoid $\L_\infty(\De)$ associated to a supersimple design $\De$ to be \textit{exotic} if $\L_\infty(\De)$ is not a group and $\pi_\infty(\De)$ is primitive. Thus $M_{13}$ is exotic, and \cite[Theorem D]{conwaygroupoids} gives strong bounds on the possible parameters of a design $\De$ for which $\L_\infty(\De)$ is exotic.

\begin{Qu}\label{p:prob1}
Is $M_{13}$ the only exotic Conway groupoid? 
\end{Qu}

More broadly, we ask whether structures other than supersimple designs could be used to construct Conway groupoids.

\begin{Qu}\label{p:prob2}
Are there alternative combinatorial structures which can be used to define interesting groupoids? 
\end{Qu}

In unpublished work, the authors address Question~\ref{p:prob2} as follows: we consider a more general framework under which to define Conway groupoids by replacing the supersimple design $\De$ with an abstract function $f: \Omega^{2} \rightarrow \Sym(\Omega)$ that sends an unordered pair of points $\{a,b\}$ to an involution $[a,b]$. Several interesting examples arise in this context, and under certain conditions on $f$ one can show that the structure of the resulting groupoid is tightly controlled.

\subsection{Structure of the paper}
In \S\ref{s: background} we present background definitions and results concerning blocks designs, two-graphs, Conway groupoids and 3-transposition groups. We also prove that the designs presented in Examples~\ref{exboolean}--\ref{ex1} satisfy the assumptions of Theorems~B and C.
In \S\ref{s: thma} we prove Theorem~A. The main result of \S\ref{s: thmc prelim} is Theorem~\ref{t:cgrp} which is a precursor to Theorem B; the proof of Theorem B is completed in \S\ref{s: thmb}. Finally, in \S\ref{s: thmc}, we prove Theorem C.

\subsection{Acknowledgments}
The authors would like to thank Dr Ben Fairbairn and Dr Justin McInroy for their helpful comments. The last author would especially like to thank the University of Western Australia for its hospitality and for helping to finance a three week visit in April 2015.

\section{Background}\label{s: background}

\subsection{Block designs and two-graphs}\label{ss:designs}  For positive integers $t, k, n, \lambda$ such that $t\leq k\leq n$, a   $t-(n,k,\lambda)$ design $(\Omega,\B)$ consists of a finite set $\Omega$ of size $n$, whose elements are called the points of the design, together with a finite multiset $\B$ of subsets 
of $\Omega$ each of size $k$ (called \textit{lines}), such that any subset of $\Omega$ of size $t$ is contained in exactly $\lambda$ lines. Recall:

\begin{Def}
A $2-(n,4,\lambda)$ design $(\Omega,\B)$ is \textit{supersimple} if any two lines intersect in at most two points.
\end{Def}

Note that a supersimple design is {\it simple}, that is, distinct liens correspond to distinct subsets of $\Omega$.

\begin{Def}\label{d:cohdef}
A $2-(n,3,\mu)$ design $(\Omega,\C)$ is a \textit{regular two-graph} if for any 4-subset $X$ of $\Omega$, either $0, 2$ or $4$ of the $3$-subsets of $X$ lie in $\C$. A subset $X$ of $\Omega$ is \textit{coherent} if every $3$-subset of $X$ lies in $\C$.
\end{Def}

We note that each point of a $2-(n,3,\mu)$ regular two-graph $(\Omega,\C)$ is contained in $\mu (n-1)/6$ triples in $\C$, hence the name `regular'.  The following result is \cite[Proposition 3.1] {taylor}.

\begin{Lem}\label{l:sizen} Let $(\Omega,\C)$ be a $2-(n,3,\mu)$ regular two-graph. Then there is a constant $s$ such that each element of $\C$
is contained in exactly $s$ coherent $4$-subsets  of $\Omega$. Moreover, $n=3\mu-2s$.
\end{Lem}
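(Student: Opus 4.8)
The plan is to establish the result by a single double-counting argument applied to a fixed triple of $\C$. The first step is to fix $T=\{a,b,c\}\in\C$ and consider the three pairs $\{a,b\},\{a,c\},\{b,c\}$ lying inside $T$. Since $(\Omega,\C)$ is a $2-(n,3,\mu)$ design (and, as is standard for two-graphs, $\C$ is a set of $3$-subsets), each of these three pairs is contained in exactly $\mu$ triples of $\C$, one of which is $T$ itself. Every triple of $\C$ other than $T$ that contains one of these pairs has the shape $\{u,v,x\}$ with $\{u,v\}\subset T$ and $x\in\Omega\setminus T$, and conversely such a triple determines $\{u,v\}$ and $x$ uniquely (as $\tau\cap T$ and the point of $\tau\setminus T$). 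Hence the number of pairs $\bigl(\{u,v\},x\bigr)$ with $\{u,v\}\subset T$, $x\in\Omega\setminus T$ and $\{u,v,x\}\in\C$ equals $3(\mu-1)$. This is the first count.

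For the second count I would run over $x\in\Omega\setminus T$ and, for each such $x$, examine the $4$-subset $X_x:=T\cup\{x\}$, whose four $3$-subsets are $T$ together with $\{a,b,x\},\{a,c,x\},\{b,c,x\}$. Since $T\in\C$, the two-graph axiom forces the number of these four triples lying in $\C$ to be $2$ or $4$; equivalently, the number of the three triples $\{u,v,x\}$ (with $\{u,v\}\subset T$) lying in $\C$ is $1$ or $3$, the value being $3$ precisely when $X_x$ is coherent. Writing $s_T$ for the number of $x\in\Omega\setminus T$ for which $X_x$ is coherent --- equivalently, the number of coherent $4$-subsets of $\Omega$ containing $T$ --- the first count can therefore be re-expressed as $3 s_T+\bigl((n-3)-s_T\bigr)=2 s_T+(n-3)$. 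Equating the two expressions gives $2 s_T=3(\mu-1)-(n-3)=3\mu-n$, so that $s_T=(3\mu-n)/2$; in particular this is independent of the chosen triple $T$. Taking $s:=(3\mu-n)/2$ now yields the first assertion, and rearranging $2s=3\mu-n$ gives $n=3\mu-2s$.

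I do not expect any real obstacle here: the argument is elementary and self-contained given the definitions. The only places that need mild care are the bookkeeping in the first count --- making sure each relevant triple is counted exactly once, via its unique trace $\tau\cap T$ on $T$ --- and the (harmless) convention that $\C$ be treated as a genuine set of triples, so that ``the $\mu$ triples through a pair'' are $\mu$ distinct triples rather than a multiset; with these understood, both counts are straightforward.
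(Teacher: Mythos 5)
Your proof is correct. The paper does not actually prove this lemma --- it simply cites it as Proposition 3.1 of Taylor's paper on regular two-graphs --- so there is no in-paper argument to compare against; your double count (incidences between the three pairs in a fixed $T\in\C$ and the triples of $\C$ meeting $T$ in two points, counted once via the design parameter $\mu$ and once via the two-graph axiom applied to each $T\cup\{x\}$) is the standard derivation, the arithmetic $3(\mu-1)=2s_T+(n-3)$ giving $s_T=(3\mu-n)/2$ is right, and your caveats about $\C$ being a genuine set of triples are appropriate.
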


\begin{Cor}\label{c:sizen}
Let $\De=(\Omega,\B)$ be a supersimple $2-(n,4,\lambda)$ design such that $(\Omega, \C)$ is a regular two-graph, where $\C$ is the set 
of collinear triples of $\De$. Then $n=6\lambda-2s$ where $s$ is the number of coherent $4$-subsets of $\Omega$ containing a given element of $\C$. In particular, $n$ is even.
\end{Cor}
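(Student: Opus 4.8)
The plan is to deduce Corollary~\ref{c:sizen} from Lemma~\ref{l:sizen} by a simple substitution, once the value of $\mu$ for the two-graph $(\Omega,\C)$ has been identified in terms of $\lambda$. First I would observe that $\C$, being the set of collinear point-triples of the supersimple design $\De$, is by hypothesis the block set of a $2$-$(n,3,\mu)$ regular two-graph; the only thing to pin down is $\mu$. Fix two distinct points $a,b\in\Omega$. They lie in exactly $\lambda$ lines of $\B$, each of which is a $4$-subset containing $\{a,b\}$, and each such line contributes exactly two collinear triples through $\{a,b\}$ (namely $\{a,b,a_i\}$ and $\{a,b,b_i\}$ in the notation of \eqref{eq:ss}). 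Here the supersimplicity of $\De$ is exactly what guarantees that as the line varies the ``third points'' are all distinct, so no triple through $\{a,b\}$ is double-counted; hence the number of triples of $\C$ containing $\{a,b\}$ is $2\lambda$, i.e. $\mu=2\lambda$.

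Having identified $\mu=2\lambda$, I would apply Lemma~\ref{l:sizen} directly: it produces a constant $s$, equal to the number of coherent $4$-subsets of $\Omega$ containing any fixed element of $\C$, and asserts $n=3\mu-2s$. Substituting $\mu=2\lambda$ gives $n=6\lambda-2s$, which is the displayed formula. The parity claim is then immediate, since $6\lambda-2s=2(3\lambda-s)$ is even.

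There is essentially no obstacle here: the content of the corollary is already carried by Lemma~\ref{l:sizen}, and the only step requiring any argument is the count $\mu=2\lambda$, which is routine once one invokes supersimplicity to rule out repeated third points. The one point worth stating carefully is why the two triples arising from a single line through $\{a,b\}$ are genuinely distinct and why triples from different lines never coincide — both follow from the fact that two distinct lines of $\De$ meet in at most two points, so a triple $\{a,b,c\}$ of $\C$ lies in a \emph{unique} line, namely the unique block containing $\{a,b,c\}$, of which there is exactly one since $\lambda$ counts blocks through pairs and supersimplicity forbids two blocks sharing three points.
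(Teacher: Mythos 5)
Your proof is correct and follows exactly the paper's route: the paper likewise observes that $(\Omega,\C)$ is a $2$-$(n,3,2\lambda)$ design and then invokes Lemma~\ref{l:sizen}. You simply spell out the counting argument for $\mu=2\lambda$ (two new triples per line through a fixed pair, all distinct by supersimplicity) that the paper leaves implicit.
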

\begin{proof}
Observe that $(\Omega,\C)$ is a $2-(n,3,2\lambda)$ design, and hence the result follows from Lemma \ref{l:sizen}.
\end{proof}

\subsection{Moves and groupoids}

Let $\De$ be a supersimple $2-(n,4,\lambda)$ design. For distinct $a,b \in \Omega,$ recall the definition of the elementary move from $a$ to 
$b$ in \eqref{eq:ss}.  Note that the right hand side of \eqref{eq:ss} is independent of the ordering of the 
lines, since the `supersimple' condition means that $\{a_i,b_i\}$ is disjoint from $\{a_j,b_j\}$ for $i\ne j$.  Recall also the definition of a move sequence  
$[x_0,x_1,\ldots,x_k]:=[x_0,x_1] \cdot [x_1,x_2] \cdots [x_{k-1},x_k]$.
For a point $\infty\in\B$, we are interested in the following three subsets of $\Sym(\Omega)$:
\begin{enumerate}
\item $\L(\De)$, the set of all move sequences;
\item $\L_\infty(\De)$, the set of all move sequences starting at $\infty$, called the \textit{Conway groupoid} of $\De$; and
\item $\pi_\infty(\De)$, the set of all move sequences which start and end at $\infty$, called the \textit{hole-stabilizer} of $\De$.
\end{enumerate}
Similarly we define $\L_x(\De)$ and $\pi_x(\De)$ for arbitrary $x\in\Omega$. 
We remark that $\pi_\infty(\De)$ is a subgroup of $\Sym(n-1)$, and is permutationally isomorphic to $\pi_x(\De)$ for each $x$ (so 
we may refer to it as \textit{the} hole-stabilizer of $\De$). Similarly, the isomorphism type of $\L_\infty(\De)$ as a groupoid 
does not depend on the choice of $\infty$. See \cite[\textsection2.2]{conwaygroupoids} for more discussion. Finally, for distinct $x,y\in\Omega$, we write 
\begin{equation}\label{d:overline}
\overline{x,y}:=\{z\in\Omega\,|\,\textnormal{there exists $\ell\in\B$ such that $x,y,z\in\ell$}\}
\end{equation}
and note in particular that $\overline{x,y}$ contains $x$ and $y$.
The next result is a simple observation and we omit the proof.

\begin{Lem}\label{l: kkk}
Let $\De$ be a supersimple $2-(n,4,\lambda)$ design. The following hold:
\begin{itemize}
\item[(i)]
$$
\L_\infty(\De)=\bigcup_{x \in \Omega} \pi_\infty(\De) \cdot [\infty,x],
$$
\item[(ii)] $\pi_\infty(\De)=\langle [\infty,x,y,\infty] \mid x,y \in \Omega \rangle.$
\end{itemize}
\end{Lem}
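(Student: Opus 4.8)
The plan is to read off both parts directly from the definitions \eqref{eq:cg} and \eqref{eq:hs}, using only a few elementary observations about the elementary moves of \eqref{eq:ss}: each $[a,b]$ is a product of disjoint transpositions, so that $[a,b]=[b,a]$ and $[a,b]^2=1$; the convention $[a,a]=1$; and the fact that move sequences concatenate, i.e. $[x_0,\dots,x_j]\cdot[x_j,\dots,x_\ell]=[x_0,\dots,x_j,\dots,x_\ell]$, which is immediate from the definition $[x_0,\dots,x_k]=[x_0,x_1][x_1,x_2]\cdots[x_{k-1},x_k]$. No substantive input is required; the work is pure bookkeeping, including the disposal of degenerate cases.

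For part (i) I would prove the two inclusions separately. For ``$\supseteq$'': if $g=[\infty,a_1,\dots,a_{j},\infty]\in\pi_\infty(\De)$ and $x\in\Omega$, then $g\cdot[\infty,x]$ is the move sequence $[\infty,a_1,\dots,a_{j},\infty,x]$, which lies in $\L_\infty(\De)$. For ``$\subseteq$'': given $h=[\infty,a_1,\dots,a_k]\in\L_\infty(\De)$ with $k\geq1$, set $x:=a_k$; since $[\infty,x]=[x,\infty]$ is an involution, $h=\bigl(h\cdot[\infty,x]\bigr)\cdot[\infty,x]$, and $h\cdot[\infty,x]=[\infty,a_1,\dots,a_k]\cdot[a_k,\infty]=[\infty,a_1,\dots,a_k,\infty]\in\pi_\infty(\De)$, whence $h\in\pi_\infty(\De)\cdot[\infty,x]$. (When $x=\infty$ one has $[\infty,x]=1$ and the statement reduces to $h\in\pi_\infty(\De)$, which already holds.)

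For part (ii), the inclusion $\langle[\infty,x,y,\infty]\mid x,y\in\Omega\rangle\leq\pi_\infty(\De)$ is immediate, since each $[\infty,x,y,\infty]$ is a move sequence from $\infty$ to $\infty$ and $\pi_\infty(\De)$ is a subgroup of $\Sym(\Omega\setminus\{\infty\})$ (noted just after \eqref{eq:hs}). For the reverse inclusion I would use a telescoping argument. Write a general element of $\pi_\infty(\De)$ as $g=[\infty,b_1,\dots,b_m,\infty]$; if $m\leq1$ then $g=1$ (indeed $[\infty,b_1,\infty]=[\infty,b_1]^2=1$) and there is nothing to prove, so assume $m\geq2$ and put $c_i:=[\infty,b_i]$, an involution. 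Then $g=c_1\cdot[b_1,b_2][b_2,b_3]\cdots[b_{m-1},b_m]\cdot c_m$, and the identity $[b_i,b_{i+1}]=c_i\cdot\bigl([\infty,b_i][b_i,b_{i+1}][b_{i+1},\infty]\bigr)\cdot c_{i+1}=c_i\cdot[\infty,b_i,b_{i+1},\infty]\cdot c_{i+1}$ lets me substitute for each factor and then collapse the resulting $c_i$'s in consecutive pairs, giving $g=[\infty,b_1,b_2,\infty][\infty,b_2,b_3,\infty]\cdots[\infty,b_{m-1},b_m,\infty]$, a product of generators.

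There is no genuine obstacle here: as the authors remark, this is a routine observation. The only ``hard part'' is to fix the composition convention (so that $[a_0,\dots,a_k]$ acts by $a_0\mapsto a_k$) and to handle the short and degenerate sequences, where the telescoped product is empty and $g$ is forced to be the identity.
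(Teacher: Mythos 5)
Your proof is correct and is exactly the routine verification the authors have in mind: the paper states Lemma~\ref{l: kkk} with the remark that it ``is a simple observation'' and omits the proof entirely. Both halves of your argument (the involution trick $h=(h\cdot[\infty,a_k])\cdot[\infty,a_k]$ for part (i), and the telescoping insertion of $c_i^2=1$ for part (ii)) are sound, including the degenerate cases.
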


\begin{Lem}\label{l:lgrp}
Let $\De$ be a supersimple $2-(n,4,\lambda)$ design. Fix $\infty \in \Omega$ and define $G:=\L_\infty(\De)$.  The following are equivalent:
\begin{itemize}
\item[(i)] $G$ is a group;
\item[(ii)]  $G=\L(\De)=\langle [a,b] \mid a,b \in \Omega \rangle$;
\item[(iii)] $\L(\De)=\L_x(\De)$ for all $x \in \Omega$.
\end{itemize}
Furthermore, if one (and therefore all) of these conditions hold, then $G$ is transitive on $\Omega$  and $\stab_G(\infty)=\pi_\infty(\De)$.
\end{Lem}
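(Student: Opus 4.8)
The plan is to establish the cycle of implications $(i)\Rightarrow(ii)\Rightarrow(iii)\Rightarrow(i)$, and then derive the two additional assertions from $(ii)$.

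First I would prove $(i)\Rightarrow(ii)$. Suppose $G=\L_\infty(\De)$ is a group. Since each generator $[\infty, x]$ lies in $G$ and is an involution, and products of such move sequences are exactly the elements of $G$, the key point is to show that \emph{every} elementary move $[a,b]$ lies in $G$. The trick is that $[a,b] = [\infty, a][\infty, b]\cdot([\infty,a][\infty,b])^{-1}[a,b]$ is not obviously helpful; instead observe that $[\infty, a, b] = [\infty,a][a,b]$ lies in $G$ (it is a move sequence starting at $\infty$), and $[\infty, a]$ lies in $G$, so $[a,b] = [\infty,a]^{-1}[\infty,a,b] = [\infty,a]\cdot[\infty,a,b] \in G$ because $G$ is closed under products and inverses and $[\infty,a]$ is an involution. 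Hence $\langle [a,b] \mid a,b\in\Omega\rangle \leq G$. The reverse containment is immediate since every move sequence $[\infty, a_1, \ldots, a_k]$ is a product of elementary moves, and trivially $G \subseteq \L(\De)$; conversely any element of $\L(\De)$ is a product of elementary moves, each of which we have shown lies in $G$, so $\L(\De)\subseteq G$. Thus $G = \L(\De) = \langle[a,b]\mid a,b\in\Omega\rangle$.

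Next, $(ii)\Rightarrow(iii)$: if $\L(\De) = \langle[a,b]\mid a,b\in\Omega\rangle$ then in particular $\L(\De)$ is a group, so $\L(\De)$ is symmetric in the role of the starting point: for any $x$, a move sequence starting at $x$ is a product of elementary moves hence lies in $\L(\De)$, giving $\L_x(\De)\subseteq\L(\De)$; conversely, given any product $g$ of elementary moves, write $g = [x, x]\cdot g = g$ and insert $1 = [x,x]$ at the front — more precisely, since $g\in\L(\De)$ is a product $[a_0,a_1]\cdots[a_{k-1},a_k]$, we have $g = [x, a_0]\,[a_0,a_1]\cdots[a_{k-1},a_k]\cdot[a_k, \cdot]$? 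This needs care; the clean argument is that $[x,a_0]\cdot([x,a_0]g) = g$ with $[x,a_0]g$ a move sequence from $x$, so $g\in \L_x(\De)$ once we know $\L_x(\De)$ is closed under left-multiplication by $[x,a_0]$, which follows because $\L_x(\De)$ contains $[x,a_0,a_0]=[x,a_0]$ and is a groupoid; I would cite \cite[\textsection2.2]{conwaygroupoids} for the groupoid structure making this manipulation legitimate. Then $(iii)\Rightarrow(i)$: if $\L(\De) = \L_x(\De)$ for all $x$, take $x=\infty$ to get $G = \L(\De)$, and $\L(\De)$ is closed under composition (concatenation of move sequences) and contains inverses (reverse a move sequence, using that each $[a,b]$ is an involution), hence is a group.

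Finally, assuming these equivalent conditions, transitivity of $G$ on $\Omega$ follows from Lemma~\ref{l: kkk}(i): $\L_\infty(\De) = \bigcup_{x}\pi_\infty(\De)\cdot[\infty,x]$, and $[\infty,x]$ sends $\infty$ to $x$, so the $G$-orbit of $\infty$ is all of $\Omega$. For $\stab_G(\infty) = \pi_\infty(\De)$: the inclusion $\pi_\infty(\De)\leq\stab_G(\infty)$ is clear since every move sequence from $\infty$ to $\infty$ fixes $\infty$; conversely, if $g\in G$ fixes $\infty$, write $g = h\cdot[\infty,x]$ with $h\in\pi_\infty(\De)$ using Lemma~\ref{l: kkk}(i), and apply both sides to $\infty$: $\infty = g(\infty) = h([\infty,x](\infty)) = h(x)$, but $h$ fixes $\infty$ and (since $n>2\lambda+2$, or indeed in general because $[\infty,x]$ is the \emph{only} coset representative moving $\infty$ to $x$) comparing shows $x = \infty$, whence $[\infty,x]=1$ and $g = h\in\pi_\infty(\De)$. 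The main obstacle I anticipate is making the groupoid-theoretic manipulations in $(ii)\Rightarrow(iii)$ fully rigorous — specifically, justifying that a move sequence can be "reanchored" at a new starting point when $\L(\De)$ is already known to be a group — so I would lean on the composition/inverse structure of $\L(\De)$ established in \cite{conwaygroupoids} rather than re-deriving it, and keep the coset-counting observation (that distinct $x$ give distinct cosets $\pi_\infty(\De)[\infty,x]$) explicit since it underpins the $\stab_G(\infty)$ computation.
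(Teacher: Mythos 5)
Your overall architecture --- the cycle $(i)\Rightarrow(ii)\Rightarrow(iii)\Rightarrow(i)$ followed by the transitivity and stabilizer claims --- is exactly the paper's, and your $(i)\Rightarrow(ii)$ step (writing $[a,b]=[\infty,a]\cdot[\infty,a,b]$) and your treatment of the final statement are sound. The genuine gap is in $(ii)\Rightarrow(iii)$. You reduce to showing $g\in\L_x(\De)$ via $g=[x,a_0]\cdot\bigl([x,a_0]g\bigr)$ and then assert that this suffices ``once we know $\L_x(\De)$ is closed under left-multiplication by $[x,a_0]$, which follows because $\L_x(\De)$ contains $[x,a_0]$ and is a groupoid.'' That justification fails: the groupoid structure only lets you concatenate a move sequence \emph{ending} at a point $p$ with one \emph{beginning} at $p$. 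Here $[x,a_0]$ ends at $a_0$, while $[x,a_0]g=[x,a_0,a_1,\ldots,a_k]$ begins at $x$, so their product is not a concatenation and there is no reason for it to lie in $\L_x(\De)$ on groupoid grounds alone. The repair is to anchor at $\infty$ instead of at $a_0$: every move sequence from $x$ satisfies $[x,x_2,\ldots,x_k]=[x,\infty]\cdot[\infty,x,x_2,\ldots,x_k]$, so $\L_x(\De)=[x,\infty]\cdot G$; since $[x,\infty]$ is an elementary move it lies in $G=\langle[a,b]\mid a,b\in\Omega\rangle$ by (ii), and $G$ is a group, so $[x,\infty]\cdot G=G=\L(\De)$. (The paper phrases this as a cardinality count, $|\L_x(\De)|=|[x,\infty]\cdot G|=|G|$ together with $\L_x(\De)\subseteq G$, which amounts to the same thing.)

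A smaller issue of the same flavour occurs in $(iii)\Rightarrow(i)$: saying ``$\L(\De)$ is closed under composition (concatenation of move sequences)'' hides precisely the point where hypothesis (iii) is needed. Given $g,h\in\L(\De)$ you must first invoke (iii) to rewrite $h$ as a move sequence beginning at the \emph{terminal} point $x$ of a move sequence representing $g$; only then does $g\cdot h$ become a concatenation and hence a move sequence. You should make that invocation explicit rather than treating closure under composition as automatic. With these two repairs your proof coincides with the paper's.
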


\begin{proof} 
Write $H:=\langle [a,b] \mid a,b \in \Omega \rangle$ and note that $G \subseteq \L(\De) \subseteq H.$ 

{\it (i) implies (ii)}:\quad Note that   
$[a,b]=[\infty,a]\cdot[\infty,a,b]$, 
and so $[a,b]\in G$ since $G$ is a group. Hence $H \subseteq G$ and so $H=\L(\De)=G$ as required. 

{\it (ii) implies (iii)}: \quad Observe that, for each $x \in \Omega$, we have $\L_x(\De) \subseteq H=G$, and each move sequence $[x,x_2,\dots,x_k]=[x,\infty] [\infty,x,x_2,\dots,x_k]$, so 
$
|\L_x(\De)|=|[x,\infty] \cdot G|=|G|.
$ 
Thus $\L_x(\De)=G=\L(\De)$, and (iii) follows. 

{\it (iii) implies (i)}: \quad Let $g,h \in G$ and recall that $G=\L(\De)$; let $x$ be the last element of $\Omega$ in a move sequence 
corresponding to $g$. Now by (iii) there exist $y_i\in \Omega$ such that $h=[x,y_1,y_2 \ldots,y_l]$. Hence $g \cdot h \in G$ and $G$ is closed under composition.   
Since $G$ is finite this implies that $G$ is a group. 

To prove the final statement, suppose that $G$ is a group. Now $\pi_\infty(\De)$ clearly fixes $\infty$ and so is a subgroup of $\stab_G(\infty)$. Thus the 
length of the $G$-orbit containing $\infty$, namely $|G:\stab_G(\infty)|$, divides the index $|G:\pi_\infty(\De)|$. 
However, by Lemma~\ref{l: kkk}~(i), the index of $\pi_\infty(\De)$ in $G$ is equal to $|\Omega|$. It follows that  $G$ is transitive
and $\stab_G(\infty)=\pi_\infty(\De)$, as required.
\end{proof}

\begin{Lem}\label{l:lgrp2}
For a supersimple $2-(n,4,\lambda)$ design $\De$, the following are equivalent:
\begin{itemize}
\item[(i)] for all $a,b,c,d \in \Omega$, $[a,b]^{[c,d]} = [a^{[c,d]},b^{[c,d]}]$;
\item[(ii)] for all $a,b,c \in \Omega$, $[a,b]^{[b,c]} = [a^{[b,c]},c]$;
\item[(iii)] for all $a,b,c \in \Omega$, $[b,c]=[a,b,c,a^{[b,c]}]$.
\end{itemize}
In addition, if these conditions hold, then $\L_\infty(\De)$ (where $\infty \in \Omega$) is a group of automorphisms of $\De$.
\end{Lem}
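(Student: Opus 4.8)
The plan is to prove the three conditions equivalent in the cyclic order (i)$\Rightarrow$(ii)$\Rightarrow$(iii)$\Rightarrow$(i), since each step is a short manipulation with elementary moves, and then to deduce the ``in addition'' clause separately.

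First I would show (i)$\Rightarrow$(ii). This is immediate: in (i) take $d=c$ wait — that is not quite it; rather take the pair $(c,d)=(b,c)$, so that $[a,b]^{[b,c]}=[a^{[b,c]},b^{[b,c]}]$, and observe that $b^{[b,c]}=c$ since $[b,c]$ is an involution swapping $b$ and $c$ (by the definition \eqref{eq:ss}). For (ii)$\Rightarrow$(iii), I would start from (ii) and conjugate: $[a,b]^{[b,c]}=[a^{[b,c]},c]$ means $[b,c]^{-1}[a,b][b,c]=[a^{[b,c]},c]$, i.e. $[a,b][b,c]=[b,c][a^{[b,c]},c]$. Multiplying on the left by $[a,b]$ — using $[a,b]^2=1$ — and rearranging with the definition of a move sequence gives $[b,c]=[a,b][b,c][a^{[b,c]},c][c,a^{[b,c]}]\cdots$; more cleanly, $[b,c]=[a,b]\cdot[a,b,b,c]\cdot\ldots$, so I would simply check that $[a,b,c,a^{[b,c]}]=[a,b][b,c][c,a^{[b,c]}]$ equals $[b,c]$ by substituting the identity from (ii) for the first two factors and cancelling. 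Finally (iii)$\Rightarrow$(i): given (iii), I have, for every pair $(a,b)$ and every $c$, a way to rewrite the conjugate $[a,b]^{[c,d]}$; the cleanest route is to note (iii) can be rearranged to recover (ii), and then bootstrap (ii) to the general conjugation statement (i) by writing an arbitrary $[c,d]$ and using that (ii) already controls conjugation by a single elementary move $[b,c]$, together with the fact (to be observed) that $[c,d]$ acts on the index pair in the expected way — this is really the statement that conjugation by elementary moves permutes the elementary moves compatibly, which one extends from the ``adjacent'' case (ii) by a telescoping argument on a move sequence joining the relevant points.

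For the ``in addition'' clause: assuming (i)--(iii) hold, I want to show $\L_\infty(\De)$ consists of automorphisms of $\De$ and is a group. By Lemma~\ref{l:lgrp} it suffices to show $\L_\infty(\De)$ is a group, for which by Lemma~\ref{l:lgrp}(iii) I need $\L(\De)=\L_x(\De)$ for all $x$; alternatively, I can show directly that $\L(\De)=\langle[a,b]\mid a,b\in\Omega\rangle$ is closed under composition, using (i): condition (i) says the generating set of elementary moves is permuted under conjugation by elementary moves, hence $H:=\langle[a,b]\rangle$ normalizes — in fact, more simply, (i) lets one push any elementary move past any other at the cost of relabelling, so every product of elementary moves can be reduced to a bounded form and $\L(\De)$ is closed; by Lemma~\ref{l:lgrp} this gives that $\L_\infty(\De)$ is a group. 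That it consists of \emph{automorphisms of $\De$} then follows because each generator $[a,b]$ preserves $\B$: a line $\{a,b,a_i,b_i\}$ through $a,b$ is fixed setwise, a line $\{a,c,d,e\}$ through $a$ but not $b$ is sent to a line (one checks $\{b,c',d',e'\}$ is again in $\B$ using property (iii) or directly the structure of elementary moves), and a line disjoint from $\{a,b\}$ is either fixed or mapped to another line via the pairings $(a_i,b_i)$; more efficiently, condition (i) is exactly the statement that conjugation by $[c,d]$ sends the elementary move $[a,b]$ to the elementary move $[a^{[c,d]},b^{[c,d]}]$, and since $[a,b]$ ``knows'' the lines through $a$ and $b$, this forces $[c,d]$ to map the line-set to itself, i.e. $[c,d]\in\Aut(\De)$; as these generate, $\L(\De)=\L_\infty(\De)\le\Aut(\De)$.

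The main obstacle I anticipate is the bookkeeping in (ii)$\Rightarrow$(iii) and (iii)$\Rightarrow$(i): the manipulations are elementary but one must be careful about the non-commutativity and the exact conventions for composing move sequences (left-to-right in \eqref{eq:cg}), and in particular to keep track of which point a move sequence ``ends at'' so that concatenations are legitimate. The telescoping step upgrading the adjacent-conjugation identity (ii) to the general conjugation identity (i) is the one place where a genuine idea (rather than pure symbol-pushing) is needed, though it is a standard move: write $[c,d]$, conjugate $[a,b]$ by it, and interleave with a move sequence from $b$ to $c$ so that each step falls under the hypothesis (ii). I expect the proof of the ``in addition'' clause to be short once (i) is in hand, since it is essentially a restatement that $\Aut(\De)$ contains all elementary moves.
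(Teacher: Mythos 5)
Your overall route is the same as the paper's: (i)$\Rightarrow$(ii) by specialising the pair $(c,d)$ to $(b,c)$ and using $b^{[b,c]}=c$; the equivalence of (ii) and (iii) by rewriting both as the single identity $[b,c]=[a,b][b,c][a^{[b,c]},c]$; the group property via Lemma~\ref{l:lgrp} by rewriting move sequences so as to start at an arbitrary point; and the automorphism property by reading lines off the $2$-cycles of elementary moves together with condition (i). So the architecture is fine.

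The one genuine gap sits exactly where you flag the crux: upgrading the adjacent-conjugation identity (ii) to the general identity (i). As written, ``conjugate $[a,b]$ by $[b,c]$ and then interleave with a move sequence from $b$ to $c$'' computes $[a,b]^{[b,c][c,d]\cdots}$, i.e.\ the conjugate by some move sequence passing through $c$ and $d$ --- not the conjugate by the single elementary move $[c,d]$ --- so without more you would prove the wrong statement. The missing ingredient is that condition (iii) itself supplies the identity $[c,d]=[b,c,d,b^{[c,d]}]=[b,c]\,[c,d]\,[d,b^{[c,d]}]$: it rewrites $[c,d]$ as a product of three elementary moves, each of which shares a point with the pair being conjugated at that stage, and then three successive applications of (ii) give $[a,b]^{[c,d]}=[a^{[c,d]},b^{[c,d]}]$. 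A similar (iii)-based rewriting is also what is really needed in your closure argument for $\L(\De)$: the product of two move sequences is not a move sequence until the second has been re-expressed as a move sequence starting at the endpoint of the first (this is the induction the paper runs, whose base case is precisely the instance of (iii) expressing any $[a_1,a_2]$ as a move sequence from a prescribed point); merely ``pushing elementary moves past each other'' via (i) does not by itself produce the consecutive-point structure that the definition of a move sequence requires.
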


\begin{proof}
First we show that conditions (ii) and (iii) are equivalent. Writing 
$[a,b]^{[b,c]} = [b,c][a,b][b,c]$, we see that condition (ii) for $a,b,c$ is equivalent to $[b,c] = [a,b][b,c][a^{[b,c]},c]$. On the other hand, by the definition of a move sequence, 
\[
[a,b,c,a^{[b,c]}]=[a,b][b,c][c,a^{[b,c]}]=[a,b][b,c][a^{[b,c]},c]
\]
and hence condition (iii) for $a,b,c$ is also equivalent to 
$[b,c] = [a,b][b,c][a^{[b,c]},c]$. Thus  conditions (ii) and (iii) are equivalent.

Next, if condition (i) holds, then taking $(a,b,c,d)$ in this condition as $(a,b,b,c)$ and noting that $b^{[b,c]}=c$, we obtain condition (ii) for $a,b,c$. Thus condition (i) implies condition (ii). 
Now assume that the equivalent conditions (ii) and (iii) hold, and let  
$a,b,c,d \in \Omega$.  Note that $[c,d]=[b,c,d,b^{[c,d]}]$. Then using this and several applications of condition (ii) yields 
$$
[a,b]^{[c,d]}=[a,b]^{[b,c][c,d][d,b^{[c,d]}]}=[a^{[c,d]},b^{[c,d]}].
$$

We now prove the final statement. We first show that $G:=\L_\infty(\De)$ is a group. To achieve this, we prove by induction on $k$ that each move sequence of length $k$ can be written as a move sequence starting from any given point $x$ of $\Omega$ and apply Lemma \ref{l:lgrp}. The identity element is equal to $[x,x]$ by convention, and it follows from (iii) that each elementary 
move $[a_1,a_2]$ may be represented by a move sequence starting with 
$x$. Thus the assertion is true for $k\leq2$. Suppose that $k>2$ and the assertion holds for all move sequences of length less than $k$, and consider $g=[x_1,x_2,\ldots,x_k]$ and a given point $x$. By induction, there exist $y_1,y_2 \ldots,y_l \in \Omega$ and $z_1,z_2 \ldots,z_m \in \Omega$ such that:
$$
[x_1,x_2]=[x,y_1,y_2 \ldots,y_l] \mbox{ and } [x_2,\ldots,x_k]=[y_l,z_1,\ldots,z_m].
$$ 
Composing these two move sequences yields the required expression for $g$.

Lastly, suppose that $\{a,b,c,d\}$ is a line of $\De$, and let $g\in G$.
Then $g$ is a move sequence, and condition (i) applied several times implies that $[a,b]^g=[a^g,b^g]$.  Then, since $(c,d)$ is a 2-cycle in the elementary move $[a,b]$, it follows that $(c^g,d^g)$ is a 2-cycle in the elementary move $[a^g,b^g]$. Therefore, $\{a^g,b^g,c^g,d^g\}$ is a line of $\De$. Thus $g\in \Aut(\De)$, as needed.
\end{proof}

\subsection{Examples}\label{s: examples}

In this section we prove that the three families of designs discussed in Examples~\ref{exboolean}, \ref{ex2} and \ref{ex1} satisfy the hypotheses of Theorem B and C. 

\begin{Lem}\label{l: boolean properties}
 Let $\De^b=(\Omega^b, \B^b)$ be a Boolean quadruple system of order $2^k$, and let $\C^b$ be the set of collinear triples of $\De^b$. Then $(\Omega^b, \C^b)$ is a regular two-graph and $\De^b$ satisfies \eqref{e:symdiff}. Furthermore, $(\L_\infty(\De^b),\pi_\infty(\De^b))=((\mathbb{F}_2)^m,\Id)$.
\end{Lem}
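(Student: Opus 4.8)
The plan is to verify the three assertions in Lemma~\ref{l: boolean properties} in turn, working directly from the two descriptions of $\B^b$ given in Example~\ref{exboolean}: the ``sum-zero'' description $\B^b=\{\{v_1,v_2,v_3,v_4\}\mid v_i\in\mathbb{F}_2^m,\ \sum v_i=\mathbf 0\}$ and the equivalent ``affine-subplane'' description $\B^b=\{v+W\mid v\in\mathbb{F}_2^m,\ W\leq\mathbb{F}_2^m,\ \dim W=2\}$. For the regular two-graph claim, recall that $\C^b$ consists of the collinear triples, i.e.\ triples $\{v_1,v_2,v_3\}$ such that $v_1+v_2+v_3$ is a fourth distinct point completing a line; since any three distinct vectors $v_1,v_2,v_3$ sum to a vector $v_4:=v_1+v_2+v_3$, and $v_4$ is automatically distinct from each $v_i$ (as the remaining two are distinct), \emph{every} $3$-subset of $\Omega^b$ lies in $\C^b$. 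So $(\Omega^b,\C^b)$ is the complete $2$-$(2^m,3,2^m-2)$ design, and the two-graph condition (that each $4$-subset contains $0,2$ or $4$ triples from $\C^b$) holds trivially because it contains all $4$.

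For property~\eqref{e:symdiff}, I would take two lines $B_1,B_2\in\B^b$ with $|B_1\cap B_2|=2$ and show $B_1\triangle B_2\in\B^b$. Using the affine-subplane description, write $B_1=v+W_1$ and $B_2=v'+W_2$ with $\dim W_i=2$; the intersection being a $2$-set forces $B_1\cap B_2$ to be a coset of the $1$-dimensional space $W_1\cap W_2$, and then a short computation shows $B_1\triangle B_2$ is again an affine plane, namely a coset of the $2$-space $(W_1\cap W_2)\oplus\langle d\rangle$ where $d$ is the difference of the two points of $B_1$ not in $B_2$ (equivalently, one checks the four points of $B_1\triangle B_2$ still sum to $\mathbf 0$ using $\sum_{B_1}v=\sum_{B_2}v=\mathbf 0$ and that they are pairwise distinct). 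Alternatively, and perhaps more cleanly, one argues purely with the sum-zero description: if $B_1=\{a,b,c,d\}$ and $B_2=\{a,b,c',d'\}$ share the pair $\{a,b\}$, then $c+d=a+b=c'+d'$, so $B_1\triangle B_2=\{c,d,c',d'\}$ has $c+d+c'+d'=(a+b)+(a+b)=\mathbf 0$; the only point needing care is that $c,d,c',d'$ are pairwise distinct, which follows from $|B_1\cap B_2|=2$ together with $B_1\ne B_2$.

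Finally, for the groupoid computation $(\L_\infty(\De^b),\pi_\infty(\De^b))=((\mathbb F_2)^m,\Id)$, I would identify the elementary moves explicitly. Fix distinct $a,b\in\Omega^b$ and set $t:=a+b\ne\mathbf 0$; the $\lambda=2^{m-1}-1$ lines through $a$ and $b$ are exactly $\{a,b,x,x+t\}$ as $x$ ranges over a transversal of $\{\mathbf 0,t,a,b\}=\{ \mathbf 0,t,a,a+t\}$ in $\mathbb F_2^m$, so by \eqref{eq:ss} the permutation $[a,b]$ sends every $x\in\Omega^b$ to $x+t$; that is, $[a,b]$ equals the translation $\tau_t$ by $t=a+b$, and $[a,a]=\tau_{\mathbf 0}=1$. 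Consequently a move sequence $[a_0,a_1,\dots,a_k]$ equals $\tau_{a_0+a_1}\tau_{a_1+a_2}\cdots\tau_{a_{k-1}+a_k}=\tau_{a_0+a_k}$, and so $\L_\infty(\De^b)=\{\tau_t\mid t\in\mathbb F_2^m\}\cong(\mathbb F_2)^m$ while $\pi_\infty(\De^b)$ consists of those move sequences with $a_0=a_k=\infty$, namely just $\tau_{\mathbf 0}=1$; in particular $\L_\infty(\De^b)$ is a group, consistent with Lemma~\ref{l:lgrp}. The only mildly delicate point throughout is the bookkeeping that the lines through a fixed pair are exactly as claimed (so that $[a,b]$ really is a full translation and not merely a partial one), but this is immediate from the affine-subplane description since the $2$-spaces $W$ with $\{ \mathbf 0,t\}\subseteq W$ correspond bijectively to the $1$-spaces of $\mathbb F_2^m/\langle t\rangle$. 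I expect this last verification to be the main (very minor) obstacle; everything else is essentially formal.
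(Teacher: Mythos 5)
Your proposal is correct and follows essentially the same route as the paper: the two-graph claim via the observation that $\De^b$ is a $3$-$(2^m,4,1)$ design so every triple is collinear, and property~\eqref{e:symdiff} via the sum-zero computation $v_3+v_4=v_1+v_2=v_5+v_6$. For the final assertion the paper simply declares it immediate from the definition, whereas you supply the (standard) verification that $[a,b]$ is the translation by $a+b$ and that move sequences telescope; this is exactly the intended argument, just written out.
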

\begin{proof}
An immediate consequence of the definition of a Boolean quadruple system is that it is a $3-(2^k,4,1)$ design. Thus $\C^b$ contains all triples in $\Omega^b$ and $(\Omega^b, C^b)$ is the complete two-graph (hence, in particular, it is regular).

To see that $\De^b$ satisfies \eqref{e:symdiff}, consider intersecting lines $\{v_1,v_2, v_3, v_4\}$ and $\{v_1,v_2, v_5,v_6\}$ in $\B^b$. By definition
\[
 v_1+v_2+v_3+v_4=v_1+v_2+v_5+v_6=\textbf{0}.
\]
This implies that
\[
 v_1+v_2=v_3+v_4=v_5+v_6
\]
and we conclude immediately that $v_3+v_4+v_5+v_6=\textbf{0}$. In other words $\{v_3,v_4,v_5,v_6\}\in \B^b$, so $\De^b$ satisfies  \eqref{e:symdiff}. The last assertion is an immediate consequence of the definition of a Boolean quadruple system.

\end{proof}

The fact that $(\Omega^a, \C^a)$ and $(\Omega^\varepsilon, \C^\varepsilon)$ are regular two-graphs goes back to Taylor \cite{taylor2}. We give a self-contained proof here for the convenience of the reader:

\begin{Lem}\label{l: symp properties}
Each design $\De$ in Example \ref{ex2} or \ref{ex1} satisfies $(\triangle)$ and the set $\C$ of collinear triples in $\De$ forms a regular two-graph. Furthermore, $(\L_\infty(\De^a),\pi_\infty(\De^a))=(2^{2m}.\Sp_{2m}(2),\Sp_{2m}(2))$ and $(\L_\infty(\De^\ep),\pi_\infty(\De^\ep))=(\Sp_{2m}(2),O_{2m}^{\ep'}(2))$.
\end{Lem}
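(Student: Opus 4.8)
The plan is to treat the two families in parallel, since both $\De^a$ and $\De^\ep$ are built from the quadratic form $\theta$ and the symplectic form $\varphi$ on $V=(\mathbb{F}_2)^{2m}$. For property~$(\triangle)$, I would argue exactly as in Lemma~\ref{l: boolean properties}: if $B_1$ and $B_2$ are lines meeting in two points, say $B_1=\{x_1,x_2,u_1,u_2\}$ and $B_2=\{x_1,x_2,u_3,u_4\}$ (writing points of $\Omega^\ep$ as $\theta_{v_i}$ when dealing with Example~\ref{ex1}), then the defining linear condition $\sum v_i=\mathbf 0$ for each block forces $u_1+u_2=u_3+u_4$ (mod the two common points), whence $u_1+u_2+u_3+u_4=\mathbf 0$; in the $\De^a$ case one checks additionally that the quadratic condition $\sum\theta=0$ is inherited, using the identity $\theta(u+v)+\theta(u)+\theta(v)=\varphi(u,v)$ together with the fact that the common pair contributes the same form-value to both sides. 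The symmetric difference $B_1\triangle B_2=\{u_1,u_2,u_3,u_4\}$ then satisfies the block axioms, so it lies in $\B$, and $|B_1\triangle B_2|=4$ follows from supersimplicity.

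For the two-graph property I would first record that in each case $(\Omega,\C)$ is a $2$-$(n,3,2\lambda)$ design (every pair of points lies in $\lambda$ lines, each contributing two collinear triples through that pair), so by Corollary~\ref{c:sizen} it suffices to verify the combinatorial two-graph condition: for any $4$-subset $X=\{p_1,p_2,p_3,p_4\}$, the number of collinear triples inside $X$ is $0$, $2$, or $4$. Translating ``$\{p_i,p_j,p_k\}$ collinear'' into the algebraic language, a triple is collinear iff the fourth point forced by the linear relation $v_i+v_j+v_k+v_\ell=\mathbf 0$ (and, for $\De^a$, the quadratic relation) actually lies in $\Omega$ — i.e.\ lies on the right quadric, or is of the form $\theta_w$ with $\theta(w)=\ep$. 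One then observes that collinearity of a triple in $X$ is governed by a single bilinear/quadratic expression in the three coordinate vectors, and a short parity argument on the four sub-triples of $X$ (using bilinearity of $\varphi$ and the quadratic identity above) shows the count is always even — in fact one shows the ``defect'' function is additive so that an odd number of exceptions is impossible. Since we already know $\De^a$ and $\De^\ep$ are supersimple $2$-$(n,4,\lambda)$ designs, the two-graph conclusion follows.

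Finally, the groupoid identifications $(\L_\infty(\De^a),\pi_\infty(\De^a))=(2^{2m}.\Sp_{2m}(2),\Sp_{2m}(2))$ and $(\L_\infty(\De^\ep),\pi_\infty(\De^\ep))=(\Sp_{2m}(2),\Oo_{2m}^{\ep'}(2))$ are not proved from scratch: they are precisely the content of \cite[Theorem~B]{conwaygroupoids}, which is quoted already in Examples~\ref{ex2} and~\ref{ex1}, together with the stated identification of $\pi_\infty(\De^\ep)$ with $\Oo_{2m}^{\ep'}(2)$ via $\ep=(1-\ep'1)/2$. So that part of the lemma is a citation, and the work of the proof is entirely in the first two paragraphs.

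**Main obstacle.** The delicate point is the verification of the two-graph condition for $\De^a$, where both a linear \emph{and} a quadratic constraint must be tracked simultaneously: one has to show that for a generic $4$-set the number of its sub-triples that ``close up'' to a genuine block is even. The cleanest way is probably to fix three of the four points, let the fourth vary, and phrase collinearity of each sub-triple as the vanishing of an affine-quadratic function of the varying point; then bilinearity of $\varphi$ collapses the four conditions into a relation that rules out exactly one or three simultaneous solutions. Getting this bookkeeping right — especially handling degenerate configurations where some $p_i+p_j$ vanish or where the forced fourth point coincides with an existing point — is where the real care is needed; everything else is a routine unwinding of the definitions.
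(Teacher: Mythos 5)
Your proposal is correct and takes essentially the same route as the paper: the paper makes your ``defect function'' explicit as $\rho(a,b,c)=\varphi(a,b)+\varphi(a,c)+\varphi(b,c)$, observes that in both families a triple is collinear exactly when $\rho$ vanishes on it, derives the $0/2/4$ count from the identity $\rho(a,b,c)+\rho(a,b,d)+\rho(a,c,d)+\rho(b,c,d)=0$, checks $(\triangle)$ by a direct computation showing $\rho(v_3,v_4,v_5)=0$, and quotes Theorem~B of the earlier Conway groupoids paper for the groupoid identifications, just as you do. The bookkeeping you flag as the main obstacle largely evaporates once $\rho$ is isolated, since the fourth point forced by a triple is automatically distinct from the other three (otherwise two of them would coincide), so collinearity really is equivalent to the vanishing of a single expression.
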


The final sentence in the lemma requires the identification of $\Omega^a$ with a vector space $V$ over the field $\mF_2$, as described in Example~\ref{ex2}. Recall that $\varphi:V\times V \to \mF_2$ is a particular alternating form.

\begin{proof}
Define 
$
\rho: V\times V \times V \to \mF_2$ by $(a,b,c) \mapsto \varphi(a,b)+\varphi(a,c)+\varphi(b,c).                
$
Observe that the lines $\{v_1,v_2,v_3,v_4\}$ in $\De^a$ are precisely those $4$-subsets of $\Omega$ for which $\sum_{i=1}^4 v_i=\textbf{0}$ and $\rho(a,b,c)=0$ for one (and hence any) 3-subset $\{a,b,c\}$ of $\{v_1,v_2,v_3,v_4\}$.
Observe further that the lines in $\De^\epsilon$ are precisely the lines $\{v_1,v_2,v_3,v_4\}$ in $\De^a$ with the property that $\theta_0(v_i)=\epsilon$ for $1 \leq i \leq 4$. 
Since 
\[
 \rho(a,b,c)+\rho(a,b,d)+\rho(a,c,d)+\rho(b,c,d) = 0
\]
for any four points $a,b,c,d\in \Omega$,  we conclude immediately that an even number of the 3-subsets of $\{a,b,c,d\}$ lies in $\C$ and $\Omega$ is a regular two-graph.

To prove \eqref{e:symdiff}, consider intersecting lines $\{v_1,v_2, v_3, v_4\}$ and $\{v_1,v_2, v_5,v_6\}$ in $\B$. By definition
\begin{align}\label{eq:tl}
 v_1+v_2+v_3+v_4&=v_1+v_2+v_5+v_6=\textbf{0},
 \end{align}
so that 
\[
 v_3+v_4+v_5+v_6+=\textbf{0}.
\]
We check that $\rho(v_3,v_4,v_5)=0$ from which it follows that  
$\{v_3,v_4, v_5,v_6\}\in \B$.
\begin{align*}
\rho(v_3,v_4,v_5)&=\varphi(v_3,v_4)+\varphi(v_3,v_5)+\varphi(v_4,v_5)
&(\mbox{definition of $\rho$})\\
&=\varphi(v_3,v_4)+\varphi(v_3,v_5)+\varphi(v_1+v_2+v_3,v_5) 
&(\mbox{by \eqref{eq:tl}})\\
&=\varphi(v_3,v_4)+\varphi(v_1,v_5)+\varphi(v_2,v_5) 
&(\mbox{bilinearity of $\varphi$})\\
&=\varphi(v_3,v_4)+\varphi(v_1,v_2)
&(\mbox{since $\rho(v_1,v_2,v_5)=0$})\\
&=\varphi(v_3,v_1+v_2+v_3)+\varphi(v_1,v_2)
&(\mbox{by \eqref{eq:tl}})\\ 
&=\varphi(v_1,v_2)+\varphi(v_1,v_3)+\varphi(v_2,v_3)=0,
&(\mbox{since $\varphi(v_3,v_3)=\rho(v_1,v_2,v_3)=0$})
 \end{align*}
as needed. The last assertion is a consequence of \cite[Theorem B]{{conwaygroupoids}}

\end{proof}

\subsection{3-transposition groups}

We will need a number of results concerning 3-transposition groups; we gather these together below.

\begin{Def}\label{d:3trans}
A {\it 3-transposition group} is a pair $(G,\E)$, where $G$ is a group, $\E$ is a set of involutions in $G$ and the following conditions hold: 
\begin{itemize}
\item[(i)] $G=\langle \E \rangle;$ and
\item[(ii)] $\E$ is a union of $G$-conjugacy classes of involutions;
\item[(iii)] for all $g,h \in \E$, $gh$ has order $1, 2$ or $3$.
\end{itemize}
Elements of the set $\E$ are called {\it $3$-transpositions}. The pair $(G,\E)$ is called a {\it finite $3$-transposition group} if the group $G$ is finite.
\end{Def}

The first result is virtually immediate; a proof is given at \cite[1.1.2]{fischer}.

\begin{Lem}\label{l:fischer} 
Let $(G,\E)$ be a $3$-transposition group.  If $N$ is a normal subgroup of $G$, then
either $N=G$, or  $(G/N, \E N/N)$ is a $3$-transposition group.
\end{Lem}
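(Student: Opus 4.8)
The plan is to verify the three defining conditions of a $3$-transposition group (Definition \ref{d:3trans}) for the pair $(G/N, \E N/N)$, under the assumption that $N\trianglelefteq G$ and $N\neq G$. Throughout, write $\bar{G}:=G/N$ and, for $g\in G$, write $\bar{g}:=gN$; so $\E N/N=\{\bar{g}\mid g\in\E\}$. The one subtlety to keep in mind is that elements of $\E$ may map to the identity of $\bar{G}$ (precisely when they lie in $N$); we must show that the \emph{non-identity} images still form a set of involutions satisfying the axioms, and the cleanest way to handle this is to note that if every element of $\E$ lies in $N$ then $G=\langle\E\rangle\leq N$, contradicting $N\neq G$, so $\E N/N$ contains a non-identity element, and more importantly each $\bar{g}$ with $g\in\E\setminus N$ has order exactly $2$ (since $g^2=1$ forces $\bar{g}^2=1$, and $\bar{g}\neq 1$).

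The three checks proceed as follows. For condition (i): since $G=\langle\E\rangle$, applying the quotient homomorphism gives $\bar{G}=\langle\,\bar{g}\mid g\in\E\,\rangle=\langle \E N/N\rangle$, so $\bar{G}$ is generated by $\E N/N$ (and equally by the non-identity elements among them). For condition (ii): $\E$ is a union of $G$-conjugacy classes, and the image of a $G$-conjugacy class under the quotient map is a $\bar{G}$-conjugacy class, because conjugation is compatible with the homomorphism; hence $\E N/N$ is a union of $\bar{G}$-conjugacy classes of elements, and restricting to the non-identity elements (the identity class is removed, if present) preserves this. For condition (iii): take $g,h\in\E$; then $gh$ has order $1$, $2$, or $3$ in $G$, so $\overline{gh}=\bar{g}\bar{h}$ has order dividing $|gh|$, hence order $1$, $2$, or $3$ in $\bar{G}$. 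This establishes all three axioms, so $(\bar{G},\E N/N)$ — equivalently, $(\bar G, (\E N/N)\setminus\{1\})$, which is what the definition literally requires since $3$-transpositions are involutions — is a $3$-transposition group.

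There is no real obstacle here; the lemma is, as the text says, ``virtually immediate''. The only point requiring a moment's care is the bookkeeping around elements of $\E$ that become trivial in the quotient: one should state explicitly that $\E N/N$ is to be interpreted as its set of non-identity elements (or observe that the generation hypothesis $N\neq G$ guarantees at least one survives), so that all listed members are genuine involutions as Definition \ref{d:3trans} demands. Everything else is a routine application of the fact that a surjective group homomorphism carries generating sets to generating sets, conjugacy classes to conjugacy classes, and cannot increase element orders.
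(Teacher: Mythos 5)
Your proof is correct. The paper itself gives no argument for this lemma, deferring to Fischer's \cite[1.1.2]{fischer} with the remark that the result is ``virtually immediate'', and your direct verification of the three axioms of Definition \ref{d:3trans} in the quotient is exactly the standard argument: generation and conjugacy classes pass to surjective images, element orders can only divide, and the order of $\bar g\bar h$ dividing $1$, $2$ or $3$ is again $1$, $2$ or $3$. Your attention to the one genuine point of bookkeeping --- that elements of $\E$ lying in $N$ become trivial, that a whole conjugacy class lies in $N$ or misses it entirely, and that $N\neq G$ together with $G=\langle\E\rangle$ guarantees a surviving involution --- is precisely what makes the statement literally correct as written.
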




Recall that $O_p(G)$ denotes the largest normal $p$-subgroup of a finite group $G$. The following result is an immediate consequence of \cite[Theorem 9.3]{aschbacher} and \cite[Theorem 2.2]{cuypers}.

\begin{Prop}\label{p:results} 
Let $(G, \E)$ be a finite $3$-transposition group and suppose that $\E$ is a conjugacy class in $G$ and that $Z(G)=1$.  Then $O_p(G)=1$ for all primes $p>3$, and exactly one of the following holds:
\begin{itemize}
\item[(i)] $O_2(G)\neq 1$;
\item[(ii)] $O_3(G)\neq 1$; or
\item[(iii)] $Q(G):=\langle gh\,|\, g,h\in \E, |gh|=2\rangle$ is a nonabelian simple group.
\end{itemize}
\end{Prop}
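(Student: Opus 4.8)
The plan is to deduce the statement directly from the two cited structure theorems, supplying only the short bridging arguments. First I would recall the precise content of the two results: \cite[Theorem 9.3]{aschbacher} describes the structure of a finite $3$-transposition group $(G,\E)$ with $Z(G)=1$ in terms of $O_2(G)$, $O_3(G)$ and the subgroup generated by products of commuting transpositions, while \cite[Theorem 2.2]{cuypers} records the corresponding trichotomy when $\E$ is a single conjugacy class. Once these are in hand, the three alternatives of the Proposition are exactly the three cases of the structure theorem, and their mutual exclusivity is immediate: a finite group whose generalised Fitting subgroup is a nonabelian simple normal subgroup has trivial $O_p$ for every prime $p$, and the structure theory forbids $O_2(G)$ and $O_3(G)$ from being simultaneously nontrivial.

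The only point needing a genuinely new argument is the claim that $O_p(G)=1$ for all primes $p>3$, which I would prove in a short self-contained way in case it is not explicitly part of the cited statements. Fix $d\in\E$ and let $x\in O_p(G)$. Since $O_p(G)$ is normal in $G$ and $\E$ is a conjugacy class, $d^x\in\E$, so $dd^x$ has order $1$, $2$ or $3$ by the $3$-transposition property. On the other hand $dd^x=d\cdot x^{-1}dx=[d,x]\in O_p(G)$, using $d^2=1$; hence $dd^x$ is a $p$-element whose order divides $3$, so $dd^x=1$ because $p>3$, i.e. $d$ centralises $x$. Thus $d$ centralises $O_p(G)$, and the same argument applies to every element of $\E$; as $G=\langle\E\rangle$ this forces $O_p(G)\le Z(G)=1$.

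The main obstacle is purely expository: one must match the subgroup $Q(G)=\langle gh\mid g,h\in\E,\ |gh|=2\rangle$ defined here with the subgroup that appears in \cite[Theorem 9.3]{aschbacher} and \cite[Theorem 2.2]{cuypers}, and check that when $O_2(G)=O_3(G)=1$ and $\E$ is a single class the relevant ``product'' of basic $3$-transposition groups has only one factor, so that $Q(G)$ is that single nonabelian simple group and not a proper subgroup or a direct product. Since $Q(G)$ is visibly normal in $G$ (conjugation permutes $\E$ and preserves the orders of products of its elements), this identification is routine once the two cited statements are laid out carefully, and no further group-theoretic input is required.
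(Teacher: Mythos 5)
Your proposal matches the paper exactly in approach: the paper offers no proof of this proposition at all, simply asserting that it is "an immediate consequence of" \cite[Theorem 9.3]{aschbacher} and \cite[Theorem 2.2]{cuypers}, which is precisely the citation-based route you take. Your supplementary argument that $O_p(G)=1$ for $p>3$ (namely that for $d\in\E$ and $x\in O_p(G)$ the product $dd^x=[d,x]$ lies in $O_p(G)$ and has order in $\{1,2,3\}$, forcing $dd^x=1$ and hence $O_p(G)\leq Z(G)=1$) is correct and standard, and goes beyond what the paper records.
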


Finally we will require the celebrated classification of finite 3-transposition groups due to Fischer \cite{fischer}. Note that this classification is not dependent on the classification of finite simple groups (indeed the reverse is true). There are several versions of this theorem, and we prefer to use a slightly simplified version covering the case with trivial soluble radical.

\begin{Thm}\label{t: fischer}
Let $(G,\E)$ be a finite 3-transposition group. Suppose that $\E$ is a conjugacy class in $G$, and that $G$ contains no non-trivial solvable normal subgroups. Then, up to conjugation in $\Aut(G)$, one of the following holds:
\begin{enumerate}
\item $m \geq 5$, $G\cong \Sym(m)$  and $\E$ is the class of transpositions;
\item $m \geq 2$, $G\cong {\rm O}_{2m}^\pm(2)$ and $\E$ is the class of transvections;
\item $m \geq 2$, $G\cong \Sp_{2m}(2)$ and $\E$ is the class of transvections;
\item $m \geq 4$, $G\cong \rm{PSU}_m(2)$ and $\E$ is the class of (projective images of) transvections;
\item $m \geq 2$, $G$ is a subgroup of $\rm{PO}^\pm_{2m}(3)$ containing $\rm{P\Omega}^{\pm}_{2m}(3)$ and $\E$ is a class of (projective images of) reflections;
\item $G$ is one of $Fi_{22}$, $Fi_{23}$, $Fi_{24}$, 
$\Omega_8^+(2)\,:\, \Sym(3)$, or ${\rm P}\Omega_8^+(3)\,:\,\Sym(3)$, and in each case $\E$ is a unique class of involutions in $G$.
\end{enumerate}
\end{Thm}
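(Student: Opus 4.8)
The plan is to derive this statement as the specialisation of Fischer's general classification of finite $3$-transposition groups to the case of trivial solvable radical, using Proposition~\ref{p:results} to identify which branch of that classification is in force. Under the present hypotheses $G$ has no non-trivial solvable normal subgroup, so in particular $Z(G)=1$ and $O_p(G)=1$ for every prime $p$. Since $\E$ is assumed to be a single conjugacy class, Proposition~\ref{p:results} applies; as $O_2(G)=O_3(G)=1$ its alternatives (i) and (ii) are excluded, so $Q(G)=\langle gh\mid g,h\in\E,\ |gh|=2\rangle$ is a non-abelian simple normal subgroup of $G$, and one checks that $G$ is then almost simple with socle $Q(G)$. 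Thus $(G,\E)$ falls within the ``reduced, centreless'' case that Fischer's theorem addresses.

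Next I would invoke the general form of Fischer's theorem \cite{fischer} (see also \cite{aschbacher}): for \emph{any} finite $3$-transposition group $(G,\E)$ with $\E$ a single conjugacy class, the quotient $G/R(G)$ by the solvable radical --- which is again a $3$-transposition group, by Lemma~\ref{l:fischer} --- is, up to conjugacy in its automorphism group, a symmetric group, one of the classical groups $\Sp_{2m}(2)$, $\mathrm{O}^{\pm}_{2m}(2)$ or $\mathrm{SU}_m(2)$, an orthogonal group over $\mathbb{F}_3$ lying between $\mathrm{P}\Omega^{\pm}_{2m}(3)$ and $\mathrm{PO}^{\pm}_{2m}(3)$, one of $Fi_{22}$, $Fi_{23}$, $Fi_{24}$, or one of the two triality extensions $\Omega_8^+(2){:}\Sym(3)$ and $\mathrm{P}\Omega_8^+(3){:}\Sym(3)$ --- each equipped with the stated class of $3$-transpositions, and possibly modulo a central $2$- or $3$-subgroup (in the branches where $O_2(G)\neq1$ or $O_3(G)\neq1$ one gets instead certain $2$-local or $3$-local extensions of the groups on this list, all of which have non-trivial solvable radical). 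In the situation at hand $R(G)=1$, so $G$ is itself on the list, and $Z(G)=1$ collapses every central extension appearing to its centreless form; in particular $\mathrm{SU}_m(2)$ becomes $\mathrm{PSU}_m(2)$, which is item~(4), and the $O_2$- and $O_3$-branches disappear altogether.

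Finally I would carry out the routine small-parameter check, discarding the low-rank cases in which the nominal group is solvable or otherwise acquires a non-trivial solvable normal subgroup --- for instance $\Sym(m)$ with $m\leq4$, $\Sp_2(2)\cong\Sym(3)$, the rank-one orthogonal groups, and $\mathrm{PSU}_m(2)$ with $m\leq3$ --- which leaves the ranges $m\geq5$ in (1), $m\geq4$ in (4) and $m\geq2$ in (2), (3) and (5); one also records the intended reading of the sporadic entries, notably that ``$Fi_{24}$'' here denotes the $3$-transposition group $\Aut(Fi_{24}')=Fi_{24}'{:}2$ rather than the simple group $Fi_{24}'$. I expect the only genuine obstacle to be expository rather than mathematical: all of the hard content is carried by Fischer's theorem, so the remaining task is merely to (a) record its general statement precisely enough that the central extensions and the $O_2$-/$O_3$-branches are visible, and (b) verify that suppressing exactly those --- which is the effect of the trivial-solvable-radical hypothesis, by the first paragraph --- leaves precisely the six families (1)--(6).
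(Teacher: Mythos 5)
Your proposal is correct and takes essentially the same route as the paper: both reduce the statement to a citation of the known classification of finite $3$-transposition groups with a single class of $3$-transpositions (the paper quotes it directly in the form of \cite[Theorem (1.1)]{cuypers}, together with \cite[\S11]{aschbacher} for the identification of $\langle\E\rangle$, rather than passing through Proposition~\ref{p:results} and the solvable radical as you do). The extra scaffolding in your first paragraph is harmless but unnecessary once the Cuypers--Hall formulation is invoked.
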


\begin{proof}
The assumptions of \cite[Theorem (1.1)]{cuypers} hold. 
Since $G=\langle\E\rangle$, and $G$ contains no non-trivial solvable normal subgroups (so in particular $Z(G)=1$), it follows that 
$G$ is an almost simple group containing $\E$
(in fact generated by $\E$), and so $G$ is one of the groups given in (1) -- (6). (Information about the groups generated by $\E$ may be found, for example, in \cite[\S11]{aschbacher}).
\end{proof}

\section{Properties of Conway Groupoids}\label{s: thma} 

In this section we prove Theorem A. Throughout the section we assume the hypotheses of Theorem A, namely: 
\begin{itemize}
 \item $(\Omega,\B)$ is a supersimple $2-(n,4,\lambda)$ design with $n > 2\lambda+2$;
\item   $\infty$ is an element of $\Omega$, and $G:=\L_\infty(\De)$.
\end{itemize}

We also write $\C$ for the set of all collinear triples in $(\Omega,\B)$. 



\begin{Prop}\label{p:lprim}
Part (a) of Theorem A holds: if $G$ is a group then $G$ is a primitive subgroup of $\Sym(\Omega)$.
\end{Prop}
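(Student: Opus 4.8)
The plan is to use that $G$, being a group, is transitive on $\Omega$ by Lemma~\ref{l:lgrp}, and then to show it has no non-trivial block of imprimitivity. So suppose for contradiction that $\Delta\ni\infty$ is a block with $2\leq d:=|\Delta|\leq n-1$; I would derive a contradiction in five steps.

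\emph{Step 1 (intersection pattern).} Every line of $\De$ meets every block of the block system in $0$, $1$, $2$ or $4$ points. Indeed, if a line $\{a,b,c,e\}$ met a block $\Gamma$ in exactly the three points $a,b,c$, then $[a,b]$ would stabilise $\Gamma$ setwise (as $a,b\in\Gamma$ and $a^{[a,b]}=b$), while $(c,e)$ is one of the transpositions of $[a,b]$ (since $\{a,b,c,e\}$ is a line through $a$ and $b$), forcing $e=c^{[a,b]}\in\Gamma$, a contradiction.

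\emph{Step 2 (the pivot).} Let $p$ lie in a block $\Gamma$ and let $b\notin\Gamma$. Then $[p,b]$ sends $\Gamma$ to the block of $b$, which is disjoint from $\Gamma$, so $[p,b]$ moves every point of $\Gamma$ and hence $\Gamma\subseteq\supp[p,b]=\overline{p,b}$, where $|\overline{p,b}|=2\lambda+2$. Applied with $\Gamma=\Delta$ this shows: for all distinct $p,q\in\Delta$ and all $b\notin\Delta$, the triple $\{p,q,b\}$ is collinear. (The content of the step: if $[p,b]$ fixed any point of $\Delta\setminus\{p\}$, that point would lie in $\Delta\cap\Delta^{[p,b]}$, contradicting that $\Delta$ is a block; and $n>2\lambda+2$ guarantees that elementary moves do have fixed points in $\Omega$.)

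\emph{Step 3 ($d\neq2$).} If $d=2$, write $\Delta=\{\infty,q\}$. By Step~2 every one of the $n-2$ points outside $\Delta$ lies on one of the $\lambda$ lines through $\{\infty,q\}$, and those lines contain only $2\lambda$ points besides $\infty$ and $q$; so $n-2\leq 2\lambda$, contradicting $n>2\lambda+2$. Hence $d\geq 3$.

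\emph{Step 4 (blocks are small and carry a sub-design).} Fix $b\notin\Delta$. For each $q\in\Delta\setminus\{\infty\}$, Step~2 gives a line through $\infty,q,b$, and by Step~1 these $d-1$ lines are pairwise distinct, so $b$ lies on at least $d-1$ of the $\lambda$ lines through $\infty$ and $b$; thus $d\leq\lambda+1$. Moreover, for any pair $\{p,q\}\subseteq\Delta$ the lines through $p,q$ that meet $\Delta$ only in $\{p,q\}$ have pairwise disjoint pairs of points outside $\Delta$ (supersimplicity) and together cover $\Omega\setminus\Delta$ (Step~2), so exactly $(n-d)/2$ of the $\lambda$ lines through $\{p,q\}$ are of this type; hence $(\Delta,\B_\Delta)$, with $\B_\Delta$ the set of lines of $\De$ contained in $\Delta$, is a supersimple $2-(d,4,\mu)$ design with $\mu=\lambda-(n-d)/2$. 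Finally $n/d\geq 3$, since $n>2\lambda+2\geq 2d$.

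\emph{Step 5 (the contradiction — the main obstacle).} What remains is to rule out this ``everything-collinear'' configuration, and this is where the real difficulty lies. The approach I would take is to track how the lines through a fixed point of $\Delta$ distribute among the blocks — Steps~1--2 (the latter applied to each block) show that each of the other $n/d-1$ blocks is met in exactly $2$ points by precisely $\binom{d}{2}$ of the lines through a given $p\in\Delta$, and Step~4 records the internal design $(\Delta,\B_\Delta)$ — and to combine these incidence identities with the fact that $G$ is a \emph{group} acting faithfully and transitively on $\Omega$, in order to violate $n>2\lambda+2$. (Equivalently, one shows that a genuine non-trivial block cannot coexist with $\L_\infty(\De)$ being a group, for instance by producing a move sequence from $\infty$ to $\infty$ whose support is incompatible with $\Delta$ being a block, or by a minimal-counterexample argument applied to $(\Delta,\B_\Delta)$.) I expect this to be the crux, because the incidence data of Steps~1--4 alone is consistent with the existence of a non-trivial block; so the argument must genuinely exploit the group structure of $G$, not merely its transitivity, together with the hypothesis $n>2\lambda+2$.
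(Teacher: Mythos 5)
Your Steps 1--4 are correct and would survive scrutiny, but the proof is not complete: Step 5 is a plan rather than an argument, and you concede yourself that the incidence data of Steps 1--4 is consistent with the existence of a non-trivial block. That concession is accurate, so as it stands no contradiction has been derived and the proposition is not proved. Note moreover that your own bounds work against the most natural way of finishing: from $d\leq\lambda+1$ and $n\geq 3d$ one gets that the number of fixed points of an elementary move, $n-2\lambda-2\geq n-2d\geq d$, is at least the block size, so you cannot conclude from Steps 1--4 that a setwise-fixed block must contain both a fixed and a moved point --- which is exactly the configuration needed to make progress.

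The paper closes the gap with two ingredients you are missing. First, it shows the block $\Delta$ containing $\infty$ is \emph{large}: for any $y\notin\overline{\infty,a_2}$ (which exists since $n>2\lambda+2$), the move $[\infty,y]$ fixes $a_2\in\Delta$, hence fixes $\Delta$ setwise, hence $y=\infty^{[\infty,y]}\in\Delta$; therefore $k=|\Delta|\geq n-2\lambda$, which strictly exceeds the number $n-2\lambda-2$ of fixed points of any elementary move. Second, taking $b\notin\Delta$ with block $\Delta_2$ and $h:=[\infty,b]$, the move $h$ has a fixed point $c$, whose block $\Delta_3$ is fixed setwise and is distinct from $\Delta$ and $\Delta_2$; since $|\Delta_3|=k$ exceeds the number of fixed points of $h$, $\Delta_3$ also contains a moved point $c'$, yielding a line $\{\infty,b,c',b'\}$ with $b'=c'^{h}\in\Delta_3$. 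The elementary move $h':=[\infty,b']$ then contains the $2$-cycles $(\infty,b')$ and $(b,c')$, so it must send $\Delta$ to $\Delta_3$ and simultaneously $\Delta_2$ to $\Delta_3$ --- impossible since $\Delta\neq\Delta_2$. This final step uses only that elementary moves lie in $G$ and permute the blocks, so the ``group structure'' you invoke in Step 5 is needed only through the preservation of the block system; the genuinely new content is the lower bound $k\geq n-2\lambda$ and the two-cycle/block-image clash, neither of which appears in your proposal.
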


\begin{proof} Suppose that  $G$ is a group. Then, by Lemma~\ref{l:lgrp}, $G$ is a transitive subgroup of $\Sym(\Omega)$. Suppose for a contradiction that $G$ 
 preserves a system of imprimitivity with $m$ blocks of size $k$, where $m\geq2, k\geq2$, and let $\Delta=\{\infty, a_2,\ldots,a_k\}$ be the block of imprimitivity that contains $\infty$.

\medskip
Since $n > 2\lambda+2$ there exists 
$y\notin\overline{\infty,a_2}$. Then $g:=[\infty,y]$ fixes $a_2$, so $g$ must fix $\Delta$ setwise, and hence 
$y=\infty^g \in \Delta$. It follows that every element in $(\Omega\setminus\overline{\infty,a_2})\cup\{\infty,a_2\}$ 
lies in $\Delta$, which implies that $k\geq n-2\lambda$. In particular the number of fixed points of $g$ is $n-2\lambda-2 < k$.

Now let $b \in \Omega \setminus \Delta$, so that the block of imprimitivity $\Delta_2$
containing $b$ is distinct from $\Delta$, and consider $h:=[\infty,b]$. Note that $h$ 
interchanges $\Delta$ and $\Delta_2$. On the other hand, since $n>2\lambda+2 = |\supp(h)|$, $h$ has a fixed point in $\Omega$, 
say $c$, and the block of imprimitivity $\Delta_3$ containing $c$ is fixed setwise by $h$ and hence is 
distinct from $\Delta$ and $\Delta_2$. Since $k=|\Delta_3|$ is larger than the number of fixed points of $h$, it follows that
$\supp(h)\cap\Delta_3$ contains a point, say $c'$, and since $\supp(h)=\overline{\infty,b}$, the set 
$\ell:= \{\infty,b,c',b'\}$ is a line, where $b':=c'^{[\infty,b]}$. Note that $b'$ lies in the  block $\Delta_3^{[\infty,b]}$ which is equal to $\Delta_3$.  Now consider the elementary move $h':=[\infty,b']$. Since $(\infty,b')$ is a 2-cycle of $h'$ the element $h'$ interchanges $\Delta$ and $\Delta_3$. However since also $(b,c')$ is a 2-cycle of $h'$ (since $\ell$ is a line containing $\infty,b'$) $h'$ should interchange $\Delta_2$ and $\Delta_3$. This contradiction completes the proof.
\end{proof}

\begin{Prop}\label{t:2trans}
Part (b) of Theorem A holds: if $(\Omega, \C)$ is a regular two-graph then $\pi_\infty(\De)$ is transitive on $\Omega\setminus\{\infty\}$.
\end{Prop}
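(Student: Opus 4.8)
The plan is to show that the hole-stabilizer $\pi_\infty(\De)$ acts transitively on $\Omega\setminus\{\infty\}$ by exhibiting, for an arbitrary point $b\in\Omega\setminus\{\infty\}$, an element of $\pi_\infty(\De)$ that moves some fixed reference point to $b$. The natural candidates are the move sequences $[\infty,x,y,\infty]$ appearing in Lemma~\ref{l: kkk}(ii), and since $\pi_\infty(\De)=\langle[\infty,x,y,\infty]\mid x,y\in\Omega\rangle$ it suffices to understand how these generators act on points not equal to $\infty$.

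First I would set $g:=[\infty,a,b,\infty]=[\infty,a][a,b][b,\infty]$ for suitably chosen $a$ and look at the image of a point under $g$. The key computation is to track how a point $c$ moves: applying the three elementary moves in turn, the outcome depends on which lines through $\{\infty,a\}$, through $\{a,b\}$, and through $\{b,\infty\}$ contain $c$. The regular two-graph hypothesis on $(\Omega,\C)$ is exactly what controls the incidence pattern among the triples $\{\infty,a,b\}$, $\{\infty,a,c\}$, etc.: the condition that $0$, $2$, or $4$ of the four triples in any $4$-subset $\{\infty,a,b,c\}$ are collinear forces certain coincidences of lines, which one can leverage to force $g$ to send a controlled point to $b$ (or, more precisely, to show the orbit of a generic point under $\pi_\infty(\De)$ is forced to be large). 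I expect the cleanest route is: fix a point $z$ collinear with $\infty$ via some line $\{\infty,z,a,a'\}$, and show that by choosing $b$ and a second auxiliary point appropriately one produces a move sequence in $\pi_\infty(\De)$ carrying $z$ to $b$; the two-graph condition guarantees the needed line exists whenever $n>2\lambda+2$ ensures enough room.

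The step I expect to be the main obstacle is the bookkeeping of \emph{which} point is sent where by a product of three elementary moves: an elementary move $[x,y]$ acts nontrivially only on $\overline{x,y}$, and composing three such moves means carefully case-splitting on whether the tracked point lies in one, two, or none of the relevant line-unions, and the regular two-graph condition must be invoked to rule out the bad configurations. A secondary obstacle is ensuring the construction does not accidentally produce the identity or fail to escape a proper subset; here the inequality $n>2\lambda+2$ (equivalently $n=6\lambda-2s$ even, by Corollary~\ref{c:sizen}) should be used to guarantee the existence of the auxiliary points needed at each stage and to ensure the supports involved are proper.

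An alternative, possibly slicker, packaging: show that the orbit $\Delta$ of a fixed point $x_0$ under $\pi_\infty(\De)$ is a block of imprimitivity for the action of a naturally associated transitive group (or directly that $\overline{\infty,x_0}\setminus\{\infty\}$ is forced into $\Delta$ by applying generators $[\infty,x_0,y,\infty]$), and then use the regular two-graph condition to propagate: any point collinear with two points already in $\Delta\cup\{\infty\}$ is again forced into $\Delta$ by a two-graph argument on the relevant $4$-subsets, so $\Delta=\Omega\setminus\{\infty\}$ by connectivity of the collinearity structure. I would try this route first, as it localizes the two-graph input to a single clean lemma about propagation of the orbit, and reserve the explicit three-term move-sequence computation as the engine inside that lemma.
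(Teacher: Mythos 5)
Your general strategy---generating $\pi_\infty(\De)$ by the elements $[\infty,x,y,\infty]$, tracking images of points through these products, and invoking the regular two-graph condition on $4$-subsets of the form $\{\infty,a,b,c\}$---is the right one, and you are also right that $n>2\lambda+2$ enters through the existence of auxiliary points. However, the one concrete lemma you commit to is flawed, and the endgame is missing.

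The propagation claim in your final paragraph, that any point collinear with two points already in $\Delta\cup\{\infty\}$ is forced into the orbit $\Delta$, is not correct as stated. If $c$ is collinear with $a,a'\in\Delta$, the two-graph condition applied to $\{\infty,a,a',c\}$ only says that $0$, $2$ or $4$ of its triples are collinear; in the case where all four are collinear, both $\{\infty,a,c\}$ and $\{\infty,a',c\}$ are collinear, so neither $[\infty,a,c,\infty]$ nor $[\infty,a',c,\infty]$ fixes $\infty$ while carrying $a$ (resp.\ $a'$) to $c$, and the argument gives nothing. The paper's proof avoids this by arranging that one triple in the $4$-set is already known to be \emph{non}-collinear: it picks $b\notin\overline{\infty,a}$ (possible since $n>2\lambda+2$), so that $[\infty,a,b,\infty]$ sends $a$ to $b$, and then for $c\in\overline{a,b}$ the $4$-set $\{\infty,a,b,c\}$ has one collinear and one non-collinear triple, forcing \emph{exactly} two collinear triples and hence exactly one of $[\infty,a,c,\infty]$, $[\infty,b,c,\infty]$ to map $a$ (resp.\ $b$) to $c$. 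This yields $\overline{a,b}\subseteq a^{\pi_\infty(\De)}$, so every orbit has size at least $|\overline{a,b}|=2\lambda+2$. Your appeal to ``connectivity of the collinearity structure'' is also unsubstantiated and is not how the paper concludes: instead it splits into the case $n\leq 4\lambda+4$, where the orbit bound leaves no room for two orbits, and the case $n>4\lambda+4$, where $|\overline{a,\infty}\cup\overline{b,\infty}|\leq 4\lambda+3$ guarantees a point $w$ outside both closures, so that $[\infty,a,w,\infty,w,b,\infty]\in\pi_\infty(\De)$ explicitly maps $a$ to $b$. Both of these pieces need to be supplied to turn your sketch into a proof.
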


\begin{proof}
Suppose that $(\Omega, \C)$ is a regular two-graph, and let  $a \in \Omega \backslash \{\infty\}$. We claim that
\begin{equation}\label{e:2trans1}
2\lambda+2 \leq |a^{\pi_\infty(\De)}|.
\end{equation}
Since $n > 2\lambda+2$, there exists $b \notin \overline{\infty,a}$. Then $a^{[\infty,a,b,\infty]}=
b \in a^{\pi_\infty(\De)}$.  It is sufficient to prove that $\overline{a,b} \subseteq a^{\pi_\infty(\De)},$ for then \eqref{e:2trans1} follows from $|\overline{a,b}|=2\lambda+2.$
To see this, let $c \in \overline{a,b}\backslash \{a,b\}$. Then, since $(\Omega,\C)$ is a regular two-graph, either $\infty \notin \overline{a,c}$ 
or $\infty \notin \overline{b,c}$ (but not both). Hence, either $[\infty,a,c,\infty]$ or $[\infty,b,c,\infty]$ 
fixes $\infty$ and maps $a$ to $c$, or $b$ to $c$, respectively, whence $c \in a^{\pi_\infty(\De)}$. Thus  \eqref{e:2trans1} is proved.

By \eqref{e:2trans1}, each orbit of $\pi_\infty(\De)$ in $\Omega\setminus\{\infty\}$ has length at least $2\lambda + 2$. Thus 
if  $n\leq 4\lambda+4$, there is not space for two orbits and so  $\pi_\infty(\De)$ is transitive on $\Omega\setminus\{\infty\}$.
We may therefore assume that $n > 4\lambda+4$. Let $a,b \in \Omega$ and observe that
$$
|\overline{a,\infty} \cup \overline{b,\infty}| \leq 2(2\lambda+2)-1=4\lambda+3
$$ 
so there exists $w \notin \overline{a,\infty}\cup \overline{b,\infty}$. Then $[\infty,a,w,\infty,w,b,\infty] \in \pi_\infty(\De)$ 
and sends $a$ to $b$. Thus  $\pi_\infty(\De)$ is transitive on $\Omega\setminus\{\infty\}$ in this case also.
\end{proof}

\begin{Prop}\label{p:2prim}
Part (c) of Theorem A  holds: if $\C$ is a regular two-graph and $G$ is a group then $\pi_\infty(\De)$ is primitive on $\Omega\setminus\{\infty\}$.
\end{Prop}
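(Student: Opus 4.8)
The plan is to combine parts (a) and (b) of Theorem~A, already established as Propositions~\ref{p:lprim} and~\ref{t:2trans}, with Lemma~\ref{l:lgrp}, so as to place ourselves in a situation of $2$-transitivity, and then to exclude a nontrivial block system for $\pi_\infty(\De)$ by a support-counting argument in the style of the two preceding proofs, using the two-graph hypothesis (and the size restriction it imposes via Corollary~\ref{c:sizen}) in an essential way.

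For the reduction: since $G=\L_\infty(\De)$ is a group, Lemma~\ref{l:lgrp} gives that $G$ is transitive on $\Omega$ and that $\stab_G(\infty)=\pi_\infty(\De)$; Proposition~\ref{p:lprim} gives that $G$ is primitive on $\Omega$; and Proposition~\ref{t:2trans} gives that $\pi_\infty(\De)=G_\infty$ is transitive on $\Omega\setminus\{\infty\}$. Hence $G$ is $2$-transitive on $\Omega$, and the assertion to be proved is exactly that $G$ is $2$-primitive; equivalently, fixing $a\in\Omega\setminus\{\infty\}$, that the two-point stabiliser $G_{\infty a}$ is maximal in $G_\infty$, that is, that $\pi_\infty(\De)$ has no block $\Delta$ on $\Omega\setminus\{\infty\}$ with $1<|\Delta|<n-1$.

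Assume for a contradiction that such a block $\Delta$ exists, with $a\in\Delta$, and put $H:=\stab_{\pi_\infty(\De)}(\Delta)$; then $G_{\infty a}\leq H<\pi_\infty(\De)$ and $H$ is transitive on $\Delta$. The starting observation is elementary: if $c\in\Delta$ with $c\notin\overline{\infty,a}$ and $b\notin\overline{\infty,a}\cup\overline{a,c}\cup\overline{\infty,c}$, then the move sequence $[\infty,a,b,\infty]$ lies in $\pi_\infty(\De)$, has support contained in $\overline{\infty,a}\cup\overline{a,b}\cup\overline{\infty,b}$ and hence fixes $c$ (none of $\{\infty,a,c\}$, $\{a,b,c\}$, $\{\infty,b,c\}$ being collinear), so it stabilises $\Delta$; since it sends $a$ to $b$, we get $b\in\Delta$. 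Thus $\Delta$ contains the complement in $\Omega\setminus\{\infty\}$ of a union of three sets of the form $\overline{\cdot,\cdot}$, giving a lower bound on $|\Delta|$ of roughly $n-6\lambda$ (slightly better after accounting for the forced overlaps of these three sets, and after separately disposing of the easy case $\Delta\subseteq\overline{\infty,a}$, in which $|\Delta|\leq2\lambda+2$). The snag is that, by Corollary~\ref{c:sizen}, $n\leq6\lambda$, so this bound is by itself far too weak to contradict $|\Delta|<n-1$, and in this regime the move sequences above can fail to fix any point at all.

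The real work, therefore, is to feed more of the two-graph structure into the argument. I would do this by choosing the auxiliary point $c\in\Delta$ so as to make $\overline{\infty,a}\cup\overline{a,c}\cup\overline{\infty,c}$ as small as possible, exploiting that the regular two-graph condition on the $4$-subsets of $\{\infty,a,b,c\}$ forces relations among the sets $\overline{\infty,a}$, $\overline{\infty,c}$, $\overline{a,c}$, $\overline{\infty,b}$, $\overline{a,b}$ --- either enlarging the family of $b$'s provably in $\Delta$, or producing, for each relevant $b$, some move sequence in $\pi_\infty(\De)$ that sends $a$ to $b$ while fixing a point already known to lie in $\Delta$. Combined with the exact constraint $n=6\lambda-2s$ (and $n$ even) from Corollary~\ref{c:sizen}, and with the primitivity of $G$ on $\Omega$ from part~(a), this should force $\Delta=\Omega\setminus\{\infty\}$, the desired contradiction; after which the proof concludes much as in Proposition~\ref{t:2trans}, splitting on the size of $n$ relative to $\lambda$. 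The main obstacle is precisely this combinatorial bookkeeping --- tracking the two-graph constraints and the line-intersections $\overline{\cdot,\cdot}$ in the narrow regime $2\lambda+2<n\leq6\lambda$ --- rather than anything group-theoretic.
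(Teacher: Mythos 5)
There is a genuine gap here: your proposal correctly reduces the problem (via Lemma~\ref{l:lgrp} and Propositions~\ref{p:lprim} and~\ref{t:2trans}) to excluding a nontrivial block $\Delta$ for $\pi_\infty(\De)$ on $\Omega\setminus\{\infty\}$, but the exclusion itself is only sketched, and the one concrete mechanism you offer --- the support-counting bound $|\Delta|\gtrsim n-6\lambda$ --- is, as you yourself observe, vacuous because Corollary~\ref{c:sizen} gives $n\leq 6\lambda$. Everything after that point (``choose $c$ to make the union small'', ``this should force $\Delta=\Omega\setminus\{\infty\}$'') is a plan rather than an argument, and it is precisely the part where the actual work lies.

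The paper's proof resolves the difficulty by a dichotomy you do not consider: whether or not $\infty\in\overline{a,b}$ for two points $a,b$ in a common block $\Delta$. If $\infty\notin\overline{a,b}$, one takes a line $\{a,b,c,d\}$ and uses the regular two-graph condition on the $4$-sets $\{\infty,a,b,d\}$ and $\{\infty,b,c,d\}$ to produce an explicit element of $\pi_\infty(\De)$ that stabilises $\Delta$ and sends $b$ to $c$; hence $\overline{a,b}\subseteq\Delta$ and $k=|\Delta|\geq 2\lambda+2$. The decisive step is then not a lower bound on $|\Delta|$ alone but a parity argument: $n$ is even by Corollary~\ref{c:sizen}, so the number of blocks $m=(n-1)/k$ is odd, hence $m\geq 3$, giving $n-1=km\geq 3(2\lambda+2)>6\lambda\geq n$, a contradiction. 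In the remaining case, where $\infty\in\overline{a,b}$ for every such pair, choosing $c\notin\overline{a,b}$ and using Lemma~\ref{l:lgrp} to see that $[a,c]\in\pi_\infty(\De)$ forces $\infty\in\overline{a,c}$ and $\infty\in\overline{b,c}$, so that exactly three $3$-subsets of $\{\infty,a,b,c\}$ are collinear, violating the two-graph condition directly. Neither the ``block absorbs $\overline{a,b}$, then parity forces at least three blocks'' count nor the ``exactly three collinear triples'' contradiction appears in your proposal, and without something playing their role the proof does not close.
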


\begin{proof}
Suppose that $(\Omega, \C)$ is a regular two-graph and that $G$ is a group. By Propositions~\ref{p:lprim} and \ref{t:2trans},
$G$ is 2-transitive on $\Omega$, and it is sufficient to prove that $\pi_\infty(\De)$ is primitive on $\Omega\setminus\{\infty\}$. 
Suppose to the contrary that  $\pi_\infty(\De)$ acts imprimitively on $\Omega \setminus \{ \infty \}$ with $m$ blocks of size $k$,
where $m\geq2, k\geq2$, and $n-1=mk$. Let $a,b \in \Omega \setminus \{\infty\}$ lie in the same block of imprimitivity, say $\Delta$. 

Suppose first that $\infty \notin \overline{a,b}$. Choose $c \in \overline{a,b}$ so that there exists a line
 $\{a,b,c,d\} \in \B$ for some $d \in \Omega\setminus\{\infty\}$. We show first that $c \in \Delta$.
Consider the set of points $\{\infty,a,b,d\}$. Since  $(\Omega,\C)$ is a regular two-graph, exactly one of $a,b$ lies in $\overline{\infty,d}$. 
By interchanging the roles of $a$ and $b$ if necessary, we may assume that $a \in \overline{\infty,d}$ and $b \notin \overline{\infty,d}$. Then, considering 
the set of points $\{\infty,b,c,d\}$ we see that exactly one of $b,d$ lies in $\overline{\infty,c}$. 
If $d \notin \overline{\infty,c}$ then the permutation $g:=[\infty,a,d,\infty]$ fixes $a$ and sends $b$ to $c$. Thus 
$c=b^g \in \Delta^g=\Delta$ in this case. Assume now that  $b \notin \overline{\infty,c}$. Then considering 
$\{\infty,a,b,c\}$ we see that $\infty \in \overline{a,c}$. Hence the permutation $h:=[\infty,b,c,a,\infty]$ 
sends $a$ to $b$ (and hence fixes $\Delta$), and sends $b$ to $c$, so $c=b^h \in \Delta^h=\Delta$ in this case also.
Thus in either case $\Delta$ contains each point of $\overline{a,b}$, and so $k=|\Delta| \geq 2\lambda+2$. Now, by Corollary \ref{c:sizen}, $n$ is even so that $m=\frac{n-1}{k}$ is odd. Hence $m \geq 3$ so that by Corollary \ref{c:sizen} again,

$$
6\lambda < (2\lambda+2)m \leq km < n \leq 6\lambda,
$$ 
a contradiction.

Thus, $\infty\in\overline{a,b}$, and we note that this holds whenever $a,b$ lie in the same block of imprimitivity of $\pi_\infty(\De)$.  Since $n>2\lambda+2$, 
there exists $c\notin\overline{a,b}$. In particular $c\ne\infty$.
If $\infty \notin \overline{a,c}$ then by Lemma \ref{l:lgrp}, $[a,c]$ lies in  $\pi_\infty(\De)$, 
fixes $b$ and sends $a$ to $c$. Thus $c$ lies in the same block of imprimitivity $\Delta$ containing $a, b$. In particular $a,c$ lie in the same block of imprimitivity so $\infty \in \overline{a,c}$, which is a contradiction.  Hence $\infty \in \overline{a,c}$ and an identical argument (with the roles 
of $a$ and $b$ interchanged) shows that  $\infty \in \overline{b,c}$. This proves that exactly 
three 3-subsets of $\{\infty,a,b,c\}$ lie in $\C$, contradicting the fact that $\C$ is a regular two-graph. 
\end{proof}

Theorem A now follows from Propositions \ref{p:lprim}, \ref{t:2trans}, and \ref{p:2prim}.

\section{Towards a proof of Theorem B}\label{s: thmc prelim}

Recall condition \eqref{e:symdiff} defined in Section \S1 for a design $\De=(\Omega, \B)$:
\begin{equation*}
\textrm{If }B_1,B_2 \in \B \textrm{ such that } |B_1 \cap B_2|=2, \textrm{ then } B_1 \triangle B_2 \in \B. \tag{$\triangle$}
\end{equation*}

Throughout this section we assume the following hypotheses and notation. 

\begin{Hyp}\label{hyp4.1}
\mbox{}
\begin{itemize}
 \item $\De=(\Omega,\B)$ is a supersimple $2-(n,4,\lambda)$ design that satisfies \eqref{e:symdiff};
\item  $(\Omega, \C)$ is a regular two-graph, where $\C$ is the set of collinear triples of $\De$;
\item  $\infty \in \Omega$, and $G:=\L_\infty(\De)$; 
\item  $\E:=\{[a,b] \mid a,b \in \Omega\}$ is the set of elementary moves on $\De$.
\end{itemize}
\end{Hyp}
The main result of the section is  Theorem~\ref{t:cgrp}. One of its conclusions is that $G$ is a group. 

\begin{Thm}\label{t:cgrp}
If Hypotheses~$\ref{hyp4.1}$ hold, then $G$ is a subgroup of ${\rm Aut(\De)}$, 
and $(G, \E)$ is a 3-transposition group.  
\end{Thm}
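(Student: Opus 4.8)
The plan is to verify the three defining conditions of a 3-transposition group for the pair $(G,\E)$, having first established that $G$ is a group of automorphisms of $\De$.

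\textbf{Step 1: $G$ is a group acting as automorphisms.} The cleanest route is to verify one of the equivalent conditions (i)--(iii) of Lemma~\ref{l:lgrp2}; then the final sentence of that lemma gives both that $G=\L_\infty(\De)$ is a group and that $G\leq\Aut(\De)$. I would aim for condition (iii): for all $a,b,c\in\Omega$, $[b,c]=[a,b,c,a^{[b,c]}]$. Equivalently (as shown in the proof of Lemma~\ref{l:lgrp2}) this says $[a,b]^{[b,c]}=[a^{[b,c]},c]$. Here is where property \eqref{e:symdiff} is essential: given a line $\{a,b,u,u'\}$ through $a$ and $b$, conjugating the transposition $(u,u')$ by $[b,c]$ should again be a transposition occurring in the product $[a^{[b,c]},c]$, and the bookkeeping of which points move where is controlled by the fact that symmetric differences of intersecting lines are lines. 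Concretely, one tracks the 2-cycles of $[b,c]$: for a line $\{b,c,x,x'\}$, the points $x,x'$ are swapped, and one must check that the images under $[b,c]$ of the lines through $\{a,b\}$ are exactly the lines through $\{a^{[b,c]},c\}$, using \eqref{e:symdiff} to produce these image lines. I expect this verification to be somewhat delicate but purely combinatorial, and I anticipate it is carried out as a separate lemma (indeed Lemma~\ref{l:lgrp} is cited as giving that $G$ is a group under the hypotheses of Theorem~B).

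\textbf{Step 2: $\E$ consists of involutions generating $G$.} Each elementary move $[a,b]$ with $a\neq b$ is a product of $\lambda+1$ disjoint transpositions by \eqref{eq:ss}, hence an involution; and $[a,a]=1\notin\E$ is not an issue since the generation statement only needs $\langle\E\rangle=G$. By Lemma~\ref{l:lgrp}(ii), once $G$ is a group we have $G=\L(\De)=\langle[a,b]\mid a,b\in\Omega\rangle=\langle\E\rangle$, so condition (i) of Definition~\ref{d:3trans} holds.

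\textbf{Step 3: $\E$ is a union of conjugacy classes.} Since $G\leq\Aut(\De)$ by Step 1, and any $g\in G$ is itself a move sequence, condition (i) of Lemma~\ref{l:lgrp2} gives $[a,b]^g=[a^g,b^g]$, which again lies in $\E$. Hence $\E$ is closed under $G$-conjugation, i.e.\ it is a union of $G$-conjugacy classes, giving condition (ii) of Definition~\ref{d:3trans}. (In fact, since $G$ is transitive on $\Omega$ by Lemma~\ref{l:lgrp}, all the $[a,b]$ with $a\neq b$ are conjugate, but we do not need this refinement.)

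\textbf{Step 4: products of two elementary moves have order $1$, $2$ or $3$.} This is the heart of the matter, and the step I expect to be the main obstacle. Take $[a,b],[c,d]\in\E$; we must show $([a,b][c,d])$ has order dividing $6$ with no order-$6$ element, i.e.\ order in $\{1,2,3\}$. By transitivity-type reductions and the conjugation formula of Step 1, one can normalise the configuration; the essential cases are governed by the size of $\{a,b\}\cap\{c,d\}$ and by the collinearity relations among $a,b,c,d$, which is exactly where the regular two-graph hypothesis enters — it forces $0$, $2$, or $4$ of the triples in any $4$-set to be collinear, ruling out the configuration that would produce an element of order~$6$. When $\{a,b\}=\{c,d\}$ the product is trivial; when they are disjoint and ``unlinked'' the moves commute (order $2$), using \eqref{e:symdiff} to see the supports interact compatibly; and when they share a point, say $b=c$, one analyses $[a,b][b,d]$ directly on its support $\overline{a,b}\cup\overline{b,d}$, showing the permutation induced is a product of $3$-cycles (order $3$) precisely because of how the lines through $\{a,b\}$, the lines through $\{b,d\}$, and their symmetric differences fit together, with the two-graph condition guaranteeing the relevant lines through $\{a,d\}$ exist. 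I would organise this as a short case analysis, leaning on \eqref{e:symdiff} and Definition~\ref{d:cohdef} at each step, and I expect the order-$3$ case to require the most care.

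Combining Steps 1--4: $G\leq\Aut(\De)$ and $(G,\E)$ satisfies (i), (ii), (iii) of Definition~\ref{d:3trans}, so $(G,\E)$ is a 3-transposition group, completing the proof.
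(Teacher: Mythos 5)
Your skeleton matches the paper's: it too establishes that $G$ is a group of automorphisms by verifying condition (iii) of Lemma~\ref{l:lgrp2}, obtains closure of $\E$ under conjugation from Lemma~\ref{l:lgrp2}(i), and then checks the order condition on products of elementary moves. The problem is that the two steps you yourself flag as ``delicate'' and ``the main obstacle'' (Steps 1 and 4) are where all the work lives, and you have not supplied the single idea that makes both of them go through. In the paper that idea is the conjugation formula (Lemma~\ref{l:sympeq}): for pairwise distinct $x,y,z$,
\[
[y,z]\cdot[x,y]\cdot[y,z]=\begin{cases}[x,y], & \text{if } x\in\overline{y,z},\\ [x,z], & \text{otherwise,}\end{cases}
\]
proved by comparing the image of each point $a$ under both sides, with the regular two-graph hypothesis restricting the configurations on $\{x,y,z,a\}$ to those with $0$, $2$ or $4$ collinear triples and \eqref{e:symdiff} manufacturing the required lines in each case, together with the companion fact (Lemma~\ref{l:xrep}) that $\{w,x,y,z\}\in\B$ forces $[w,x]=[y,z]$. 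Once this formula is available, your Step 1 is a short computation (the identity $[b,c]=[a,b,c,a^{[b,c]}]$ follows from two applications of it), and Step 4 collapses: either $[x,y]$ and $[y,z]$ commute, so the product has order $2$, or they satisfy the braid relation $XYX=YXY$, which for distinct involutions with $XY\neq 1$ forces $|XY|=3$. Your alternative plan of exhibiting $[a,b][b,d]$ directly as a product of $3$-cycles on its support is not needed and would be much more painful.

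A second, smaller gap is in your treatment of $[a,b][c,d]$ with $\{a,b\}\cap\{c,d\}=\emptyset$. ``Unlinked'' is not the right dichotomy: if $\supp([a,b])$ and $\supp([c,d])$ are disjoint the moves commute trivially, but if they share a point $x$ you must rewrite $[a,b]=[x,y]$ and $[c,d]=[x,z]$ via Lemma~\ref{l:xrep} to reduce to the shared-point case. Without that rewriting device --- which is the first place \eqref{e:symdiff} really bites --- the case analysis in your Step 4 cannot be completed.
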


\begin{Lem}\label{l:xrep}
If $\{w,x,y,z\} \in \B$ is a line then $[w,x]=[y,z]$.
\end{Lem}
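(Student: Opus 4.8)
The plan is to show that the permutation $[w,x][y,z]$ is the identity by exploiting the hypotheses in force (Hypotheses~\ref{hyp4.1}), in particular condition~\eqref{e:symdiff} and the regular two-graph property. Note first that $[w,x]$ and $[y,z]$ are both involutions, so it suffices to prove $[w,x]=[y,z]$ as permutations of $\Omega$, i.e.\ that they agree pointwise. Since $\{w,x,y,z\}$ is a line, $(y,z)$ is one of the transpositions making up $[w,x]$ and $(w,x)$ is one of the transpositions making up $[y,z]$; so the two involutions agree on the four points $w,x,y,z$. The remaining task is to analyze how each elementary move acts on an arbitrary point $u \in \Omega \setminus \{w,x,y,z\}$.

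The key computation is the following. Fix such a $u$. The point $u$ is moved by $[w,x]$ precisely when $u \in \overline{w,x}$, say via a line $\{w,x,u,u'\}\in\B$, in which case $u^{[w,x]}=u'$; similarly $u$ is moved by $[y,z]$ precisely when $u\in\overline{y,z}$, via a line $\{y,z,u,u''\}$, with $u^{[y,z]}=u''$. The first step is to show that these two conditions are equivalent, i.e.\ $u\in\overline{w,x}$ if and only if $u\in\overline{y,z}$; here I would use the regular two-graph condition applied to suitable $4$-subsets of $\{w,x,y,z,u,\dots\}$ together with the fact that $\{w,x,y,z\}$ is a coherent $4$-set (all four of its triples being collinear). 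The second, and I expect harder, step is to show that when both hold we actually get $u'=u''$. The natural tool is \eqref{e:symdiff}: the lines $\{w,x,u,u'\}$ and $\{w,x,y,z\}$ meet in the two points $w,x$, so their symmetric difference $\{u,u',y,z\}$ is a line of $\B$; by supersimplicity the line through $y,z,u$ is unique, so this forces the line $\{y,z,u,u''\}$ to coincide with $\{y,z,u,u'\}$, whence $u''=u'$.

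Putting these together, $[w,x]$ and $[y,z]$ have the same support and agree on it, hence are equal as permutations, which is the claim. The main obstacle I anticipate is the bookkeeping in the first step — verifying that $u$ lies in $\overline{w,x}$ exactly when it lies in $\overline{y,z}$, which requires carefully choosing which $4$-subsets of points to feed into the two-graph regularity condition (and handling the degenerate cases where $u$ happens to be collinear with some but not all of $w,x,y,z$, or where some of the auxiliary points coincide). Once that equivalence of supports is established, the identification $u'=u''$ via \eqref{e:symdiff} and supersimplicity is essentially immediate.
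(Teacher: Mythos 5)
Your proof is correct and is essentially the paper's own argument: the paper's one-line proof is precisely your symmetric-difference observation, namely that by \eqref{e:symdiff} and supersimplicity $\{w,x,a,b\}$ is a line if and only if $\{y,z,a,b\}$ is, so the two elementary moves are products of exactly the same transpositions. The obstacle you anticipate in your ``first step'' is illusory --- the regular two-graph property is not needed at all, since the \eqref{e:symdiff} argument you give for the second step, applied in both directions (noting that supersimplicity forces the two lines to meet in exactly $\{w,x\}$, resp.\ $\{y,z\}$), already shows that $u\in\overline{w,x}$ if and only if $u\in\overline{y,z}$.
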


\begin{proof}
This follows from the fact (using  \eqref{e:symdiff}) that $\{w,x,a,b\}$ is a line containing $\{w,x\}$ if and only if $\{y,z,a,b\}$ is a line containing $\{y,z\}$. 
\end{proof}

\begin{Lem}\label{l:sympeq}
For pairwise distinct $x,y,z \in \Omega$,
\[
[y,z]\cdot [x,y] \cdot [y,z]=\left\{\begin{array}{ll}
           \textrm{$[x,y]$}, & \textrm{ if $x \in \overline{y,z}$;} \\
           \textrm{$[x,z]$}, & \textrm{ otherwise.}
          \end{array}\right.
\]

\end{Lem}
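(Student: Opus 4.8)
The plan is to compute the conjugate $[y,z]\cdot[x,y]\cdot[y,z]$ directly as a product of transpositions, tracking carefully how the support of $[x,y]$ is moved by $[y,z]$. First I would write $[x,y]=(x,y)\prod_{i=1}^{\lambda}(x_i,y_i)$, where $\{x,y,x_i,y_i\}$ for $i=1,\dots,\lambda$ are the $\lambda$ lines through $\{x,y\}$, and similarly $[y,z]=(y,z)\prod_{j=1}^{\lambda}(y_j,z_j)$. Since conjugation of a permutation $\sigma$ by $\tau$ replaces each transposition $(a,b)$ occurring in $\sigma$ by $(a^{\tau},b^{\tau})$, the element $[y,z]\cdot[x,y]\cdot[y,z]=[x,y]^{[y,z]}$ is the product of the transpositions $(x^{[y,z]},y^{[y,z]})$ and $(x_i^{[y,z]},y_i^{[y,z]})$ for $i=1,\dots,\lambda$. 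Now $y^{[y,z]}=z$, so the first transposition becomes $(x^{[y,z]},z)$. The remaining task is to identify the pairs $\{x_i,y_i\}^{[y,z]}$ and to see that the resulting involution is exactly $[x,z]$ (the elementary move associated to the pair $\{x^{[y,z]},z\}$) — or, in the degenerate case $x\in\overline{y,z}$, that the conjugation fixes $[x,y]$.

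I would split into the two cases. \emph{Case $x\in\overline{y,z}$:} then $\{x,y,z,w\}$ is a line for some $w$, so by Lemma~\ref{l:xrep} we have $[x,y]=[z,w]$ and also $[x,z]=[y,w]$; moreover $x^{[y,z]}=x$ since $x$ is fixed by $[y,z]$ (as $x\notin\{y,z\}$ and $x$ is not matched to anything else: $x$ already lies on the line $\{y,z,x,w\}$, so in $[y,z]$ the point $x$ is paired with $w$ — wait, so actually $x^{[y,z]}=w$). Thus I should instead argue: in $[y,z]$ the line $\{y,z,x,w\}$ contributes the transposition $(x,w)$, so $x^{[y,z]}=w$, and the first transposition of the conjugate is $(w,z)$. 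Conjugating the line $\{x,y,x_i,y_i\}$ through $\{x,y\}$ by $[y,z]$ sends it to a line through $\{w,z\}$ (image of $\{x,y\}$), and since the lines through $\{w,z\}$ and through $\{x,y\}$ coincide via $\triangle$ with the line $\{x,y,z,w\}$, one checks the conjugate is the elementary move on $\{w,z\}$, which by Lemma~\ref{l:xrep} equals $[x,y]$. \emph{Case $x\notin\overline{y,z}$:} then $x^{[y,z]}=x$ (as $x$ lies on no line with $\{y,z\}$, so $[y,z]$ fixes it), and the first transposition of the conjugate is $(x,z)$; I then show the remaining transpositions $\{x_i,y_i\}^{[y,z]}$ are precisely the pairs completing the $\lambda$ lines through $\{x,z\}$, so the conjugate is $[x,z]$.

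The main obstacle will be the bookkeeping in matching the $\lambda$ transversal pairs: for a line $\ell=\{x,y,x_i,y_i\}$ through $\{x,y\}$, I need to know whether $\ell$ meets the line $\{y,z,\dots\}$-structure, and how $\triangle$ forces the image under $[y,z]$ to be again a line through the image pair. The clean way is to use Lemma~\ref{l:xrep} together with \eqref{e:symdiff}: whenever $x_i^{[y,z]}\neq x_i$, the point $x_i$ lies on a line $\{y,z,x_i,z'\}$, and applying $\triangle$ to that line and to $\{x,y,x_i,y_i\}$ (which share the point $x_i$ — but they share only one point, so $\triangle$ does not directly apply). So in fact the argument should be organised around the orbits of $[y,z]$ on $\Omega$ and a counting argument showing both sides have support of the same size $2\lambda+2$ and agree on it; equivalently, one may invoke the alternative characterisation that $[x,z]$ is the unique elementary move whose support is $\overline{x,z}$ and deduce that $[x,y]^{[y,z]}$ has support $\overline{x,y}^{[y,z]}=\overline{x,z}$ (the last equality itself needing the two-graph/$\triangle$ hypotheses, since $[y,z]$ maps lines to lines once we know, via Lemma~\ref{l:xrep} and $\triangle$, that it is a design automorphism). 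This reduces the lemma to verifying that $[y,z]$ preserves $\B$ — which can be extracted from Lemma~\ref{l:xrep} and \eqref{e:symdiff} — plus the elementary identity $\overline{x,y}^{[y,z]}=\overline{x,z}$ in the non-collinear case and $\overline{x,y}^{[y,z]}=\overline{x,y}$ in the collinear case.
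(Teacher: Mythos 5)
Your top-level strategy is reasonable: since conjugation by $[y,z]$ replaces each transposition $(a,b)$ in the cycle decomposition of $[x,y]$ by $(a^{[y,z]},b^{[y,z]})$, the lemma reduces to showing (i) that the image pair $\{x^{[y,z]},y^{[y,z]}\}$ equals $\{w,z\}$ with $\{x,y,z,w\}\in\B$ (whence $[w,z]=[x,y]$ by Lemma~\ref{l:xrep}) or $\{x,z\}$, according to whether $x\in\overline{y,z}$ or not, and (ii) that the $\lambda$ transversal transpositions of $[x,y]$ are carried onto the $\lambda$ transversal transpositions of that target elementary move. You correctly catch your own slip about $x^{[y,z]}$. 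But step (ii) is where the entire content of the lemma lies, and your proposal does not carry it out. You observe yourself that \eqref{e:symdiff} cannot be applied to $\{x,y,x_i,y_i\}$ and $\{y,z,x_i,z'\}$ because they may share only the single point $x_i$, and you then retreat to ``invoke that $[y,z]$ preserves $\B$,'' asserting this can be extracted from Lemma~\ref{l:xrep} and \eqref{e:symdiff}. That assertion is unsubstantiated and, as stated, close to circular: in the paper the fact that elementary moves (indeed all of $\L(\De)$) act as automorphisms of $\De$ is Lemma~\ref{l:braid}(c), which is deduced \emph{from} Lemma~\ref{l:sympeq}. Proving directly that $[y,z]$ maps lines to lines is essentially equivalent to the lemma itself, so the ``reduction'' only relocates the work you are trying to avoid.

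The missing ingredient is concrete use of the regular two-graph hypothesis, which you mention only in passing. The paper's proof fixes a point $a\notin\{x,y,z\}$ and uses the two-graph property to split into the cases where $0$, $2$ or $4$ of the $3$-subsets of $\{x,y,z,a\}$ are collinear; in each case it manufactures, by applying \eqref{e:symdiff} to pairs of lines meeting in \emph{two} points (for instance $\{x,y,a,b\}\triangle\{x,z,a,c\}=\{y,z,b,c\}$), enough lines to compute $a^{[y,z][x,y][y,z]}$ and compare it with $a^{[x,y]}$ or $a^{[x,z]}$, the case of exactly two collinear triples splitting into six subcases. Without this trichotomy your bookkeeping has no means of controlling which of the points $x_i,y_i$ lie in $\overline{y,z}$ and where $[y,z]$ sends them, so the proposal as written has a genuine gap at its central step.
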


\begin{proof}



Let $w=[y,z] \cdot [x,y] \cdot [y,z]$.
Note first that $w$ is conjugate to $[x,y]$, and hence is an involution. 
Thus it suffices to show directly that the image under $w$ of each point $a \in \Omega$ 
is the same as its image under $[x,y]$ or $[x,z]$, according to whether $x \in \overline{y,z}$ or $x \notin \overline{y,z}$, respectively. 
It is straightforward to check that this is true if $a \in \{x,y,z\}$, so let $a \in \Omega \setminus \{x,y,z\}$. 
We consider three cases, according to whether zero, two or four of the $3$-subsets of $X:=\{x,y,z,a\}$ are collinear. 

If no 3-subsets of $X$ are collinear then $x \notin \overline{y,z}$ and since $a$ is fixed by all of $[x,y]$, $[y,z]$, $[x,z]$, it is also fixed by $w$. If all 3-subsets of $X$ are collinear 
then  $x \in \overline{y,z}$, and we have lines $\{x,y,a,b\}$, $\{x,z,a,c\}$, $\{y,z,a,d\} \in \B$,  for some $b,c,d \in \Omega$. 
Moreover, by  \eqref{e:symdiff}, the following $4$-subsets are also lines:
\begin{align*}
\{x,y,a,b\}  \triangle &\{x,z,a,c\}=\{y,z,b,c\};\\ 
\{x,y,a,b\} \triangle & \{y,z,a,d\}=\{x,z,b,d\};\\
\{x,z,a,c\}  \triangle & \{y,z,a,d\}=\{x,y,c,d\}.
\end{align*}
It can now be checked that $w$ and $[x,y]$ both send $a$ to $b$. 

Lastly suppose that exactly two of the 3-subsets of $X$ are collinear. There are six possibilities for the collinear pairs, corresponding to the six rows of Table~\ref{t:sympeq}.  Suppose first that $\{x,y,z\}$ is collinear. If $\{a, x, y\}$ is also collinear then $\{x,y,a,b\}$ is a line, for some $b \in \Omega$. 
In this case, if $\{b, y,z\}$ is collinear then for some $c \in \Omega$, $\{y,z,b,c\}$ is a line and hence $\{x,y,a,b\} \triangle 
\{y,z,b,c\}=\{x,z,a,c\}$ is a line, whence $\{a,x,z\}$ is collinear, which is a contradiction. Thus $\{b, y,z\}$ is not collinear, and hence  $a^w=a^{[x,y]}=b$ proving the assertions of row 1.  Next, if 
 $\{a, y, z\}$ is collinear, then $\{y,z,a,b\}$ is a line, for some $b \in \Omega$, and a similar argument to the previous case shows that $\{b,x,y\}$ is not collinear. Thus $a^w=a^{[x,y]}=a$ and the assertions of row 2 hold. If  $\{a, x, z\}$ is collinear, then $a$ is fixed by both $[y,z]$ and $[x,y]$, and hence $a^w=a^{[x,y]}=a$ and the assertions of row 3 hold. This completes the proof that $w=[x,y]$ if $x\in\overline{y,z}$. 

Now suppose that $\{x,y,z\}$ is not collinear. Then one of rows 4,5 or 6 holds.  
For row 4, there exist $b,c \in \Omega$ such that $\{x,y,a,b\}$ and $\{x,z,a,c\}$  are lines. 
Property \eqref{e:symdiff} implies that  $\{x,y,a,b\} \triangle \{x,z,a,c\}=\{y,z,b,c\}$ is a line, from which it 
follows that $w$ and $[x,z]$ both send $a$ to $c$. 
Similarly, for row 5, there exist $b,c \in \Omega$ such that $\{x,y,a,b\}$ and $\{y,z,a,c\}$  are lines. 
Property \eqref{e:symdiff} implies that  $\{x,y,a,b\} \triangle \{y,z,a,c\}=\{x,z,b,c\}$ is a line. If we also had a line $\{x,y,c,d\}$ for some $d\in\Omega$, then $\{y,z,a,c\} \triangle \{x,y,c,d\}=\{x,z,a,d\}$ is a line, which is a contradiction since $\{x,z,a\}$ is not collinear. Hence $\{x,y,c\}$ is not collinear, and so $w$ and $[x,z]$ both fix $a$. 
Finally for row 6,  there exist $b,c \in \Omega$ such that $\{y,z,a,b\}$ and $\{x,z,a,c\}$  are lines. 
Property \eqref{e:symdiff} implies that  $\{y,z,a,b\} \triangle \{x,z,a,c\}=\{x,y,b,c\}$ is a line. If we also had a line $\{y,z,c,d\}$ for some $d\in\Omega$, then $\{x,z,a,c\} \triangle \{y,z,c,d\}=\{x,y,a,d\}$ is a line, which is a contradiction since $\{x,y,a\}$ is not collinear. Hence $\{y,z,c\}$ is not collinear, and so $w$ and $[x,z]$ both send $a$ to $c$.
 This completes the proof.
\end{proof}

\begin{center}
\begin{table}[!ht]
{\small
\begin{tabular}{c|cccc}
Collinear triples in $X$ & $a^w$ & $a^{[x,y]}$ & $a^{[x,z]}$ \\ 
\hline
$\{x,y,z\}$, $\{a,x,y\}$ & $a^{[x,y]}$ & $a^{[x,y]}$ & -- \\
$\{x,y,z\}$, $\{a,y,z\}$ & $a$         & $a$         & -- \\
$\{x,y,z\}$, $\{a,x,z\}$ & $a$         & $a$         & -- \\ \hline
$\{a, x,y\}$, $\{a,x,z\}$ & $a^{[x,z]}$ & -- & $a^{[x,z]}$ \\
$\{a, x,y\}$, $\{a,y,z\}$ & $a$         &  -- & $a$\\
$\{a, y,z\}$, $\{a,x,z\}$ & $a^{[x,z]}$ & -- & $a^{[x,z]}$ \\ \hline
\end{tabular}
\caption{Comparing images for the Proof of Lemma~\ref{l:sympeq}: here  $w=[y,z] \cdot [x,y] \cdot [y,z]$.  }\label{t:sympeq}
}
\end{table}
\end{center}

Now we derive three more facts about the moves on $\De$.



\begin{Lem}\label{l:braid}
Let  $x,y,z \in \Omega$ be pairwise distinct. 
\begin{enumerate}
 \item[(a)] Then,
 \[ o([x,y] \cdot [y,z])=\left\{\begin{array}{ll}
           \textrm{$2$}, & \textrm{ if $x \in \overline{y,z}$;} \\
           \textrm{$3$}, & \textrm{ if $x \notin \overline{y,z}$.}
          \end{array}\right.
\]
\item[(b)] if $x \notin \overline{y,z}$ then $[z,x,y,z]=[x,y].$
\item[(c)] $\L(\De)=\L_\infty(\De)$ is a group of automorphisms of $\De$.

\end{enumerate}
\end{Lem}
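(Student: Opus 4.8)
The plan is to deduce all three parts from Lemma~\ref{l:sympeq}. Fix pairwise distinct $x,y,z\in\Omega$ and write $a:=[x,y]$, $b:=[y,z]$, $c:=[x,z]$; each is an involution by~\eqref{eq:ss}. The first point to note is that the hypothesis ``$x\in\overline{y,z}$'' just says that $\{x,y,z\}$ is a collinear triple, so it is symmetric in $x,y,z$; hence Lemma~\ref{l:sympeq} may be applied with the three points listed in any order. Doing so gives, when $\{x,y,z\}$ is collinear, the relation $bab=a$ (and its cyclic variants), and when $\{x,y,z\}$ is not collinear, the three relations $bab=c$, $cbc=a$ and $aca=b$.

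For part~(a): if $x\in\overline{y,z}$ then $bab=a$ shows that $a$ and $b$ commute, so $(ab)^2=1$; and $ab\neq1$ since $b$ moves $x$ (to the fourth point of the line through $x,y,z$, which is not $y$) while $a$ moves $x$ to $y$. So $o(ab)=2$. If $x\notin\overline{y,z}$, substituting $bab=c$ and then $aca=b$ gives $(ab)^3=a(bab)ab=acab=(aca)b=b^2=1$; and $ab\neq1$ because $b$ fixes $x$ (as $x\notin\supp(b)=\overline{y,z}$) whereas $a$ does not. Hence $o(ab)=3$. Part~(b) is then immediate: by definition of a move sequence $[z,x,y,z]=[z,x][x,y][y,z]=cab$, and using $c=bab$ together with $(ab)^3=1$ from part~(a) yields $cab=babab=b(ab)^{-1}=a=[x,y]$. (Equivalently, $a,b,c$ generate a copy of $\Sym(3)$ and (b) is the braid relation.)

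For part~(c), I would verify condition~(ii) of Lemma~\ref{l:lgrp2}, that is, $[a,b]^{[b,c]}=[a^{[b,c]},c]$ for all $a,b,c\in\Omega$ (now letting $a,b,c$ denote arbitrary points). By Lemma~\ref{l:sympeq}, the left-hand side $[b,c][a,b][b,c]$ equals $[a,b]$ if $a\in\overline{b,c}$ and $[a,c]$ otherwise. If $a\notin\overline{b,c}$ then $[b,c]$ fixes $a$, so $a^{[b,c]}=a$ and the right-hand side is $[a,c]$, in agreement. If $a\in\overline{b,c}$, let $\{a,b,c,a'\}$ be the line through $a,b,c$; then $a^{[b,c]}=a'$, and Lemma~\ref{l:xrep} applied to this line gives $[a',c]=[a,b]$, again in agreement. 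Thus condition~(ii) holds, so Lemma~\ref{l:lgrp2} tells us that $G=\L_\infty(\De)$ is a group of automorphisms of $\De$, and then Lemma~\ref{l:lgrp}(ii) gives $\L(\De)=\L_\infty(\De)$. The proof involves no substantive obstacle; the only thing needing attention is bookkeeping — keeping track of which conjugate equals which when Lemma~\ref{l:sympeq} is re-applied with the points permuted in~(a), and separating the collinear and non-collinear cases in~(c).
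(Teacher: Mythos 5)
Your proof is correct and follows essentially the same route as the paper: all three parts are deduced from Lemma~\ref{l:sympeq} (together with Lemma~\ref{l:xrep}), with (a) obtained from the commuting/braid relations, (b) derived from (a), and (c) by verifying one of the equivalent conditions of Lemma~\ref{l:lgrp2} and then invoking Lemma~\ref{l:lgrp}. The only cosmetic difference is that you verify condition (ii) of Lemma~\ref{l:lgrp2} where the paper verifies the equivalent condition (iii), and your part (b) uses $(ab)^3=1$ rather than two direct applications of Lemma~\ref{l:sympeq}; neither affects the substance.
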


\begin{proof} (a) 
Let $X:=[x,y]$ and $Y:=[y,z]$. Note that, since $x,y,z$ are distinct, $X\ne Y$ and so the product $XY$ is not the identity. 
By Lemma \ref{l:sympeq}, either $XY=YX$ (if $x \in \overline{y,z}$) and $XY$ has order $2$, or $X$ and $Y$ satisfy the braid 
relation $XYX=YXY$, and $XY$ has order 3.

(b) Two applications of Lemma \ref{l:sympeq} yield 
$$
[z,x,y,z]=[z,x][x,y][y,z]=[z,x][x,y][z,x][z,x][y,z] $$ $$=[z,y][z,x][y,z]=[x,y],
$$ 
as required.

(c) We check that $\De$ satisfies the hypotheses of Lemma \ref{l:lgrp2}(iii). Let $x,y,z \in \Omega$. If $x \notin \overline{y,z}$ then $[x,y]=[z,x,y,z]=[z,x,y,z^{[x,y]}]$ by part (b). If $x \in \overline{y,z}$, so that $\{x,y,z,w\}$ is a line say, then $$[z,x,y,z^{[x,y]}]=[z,x]\cdot [x,y] \cdot [y,w]=[x,y] \cdot [z,x]\cdot [y,w]=[x,y] \cdot [z,x]\cdot [z,x]=[x,y],$$ where we use both the facts that $[z,x]=[y,w]$ (Lemma \ref{l:xrep}) and $[x,y]$ and $[x,z]$ commute (Lemma \ref{l:sympeq}). Thus the hypotheses of Lemma \ref{l:lgrp2}(iii) are satisfied and we conclude that $\L(\De)$ is a group of automorphisms of $\De$. In particular, $\L(\De)$ is a group, and so $\L(\De)=\L_\infty(\De)$ by Lemma \ref{l:lgrp}.

\end{proof}

The previous two lemmas allow us to prove that the set $\E$ of elementary moves is closeD under
conjugation by elements of $\langle\E\rangle$.

\begin{Lem}\label{l:conj}
If Hypotheses~$\ref{hyp4.1}$ hold, then $[x,y]^g=[x^g,y^g]$ for all $g\in\langle\E\rangle$ and all $x,y\in\Omega$.
\end{Lem}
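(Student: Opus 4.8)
The plan is to establish the identity $[x,y]^g=[x^g,y^g]$ by reducing to the case where $g$ is a single elementary move, and then invoking the case analysis already packaged in Lemma~\ref{l:sympeq}. First I would note that since $\langle\E\rangle=\L(\De)$ is a group by Lemma~\ref{l:braid}(c), every $g\in\langle\E\rangle$ is a product of elementary moves $g=[a_1,b_1][a_2,b_2]\cdots[a_k,b_k]$. Conjugation is multiplicative in the sense that if the identity holds for $g_1$ and for $g_2$, then $[x,y]^{g_1g_2}=\bigl([x,y]^{g_1}\bigr)^{g_2}=[x^{g_1},y^{g_1}]^{g_2}=[x^{g_1g_2},y^{g_1g_2}]$, so by induction on $k$ it suffices to prove the identity when $g=[a,b]$ is a single elementary move.

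So the core step is: for all $x,y,a,b\in\Omega$, $[x,y]^{[a,b]}=[x^{[a,b]},y^{[a,b]}]$. Here I would first dispose of degenerate overlaps between $\{x,y\}$ and $\{a,b\}$. If $a=b$ then $[a,b]=1$ and there is nothing to prove. If $\{x,y\}\cap\{a,b\}=\emptyset$, one can reduce via Lemma~\ref{l:sympeq}: writing $[a,b]$ appropriately (for instance, if $b\notin\overline{a,x}$ we have $[a,b]=[b,a]$, and Lemma~\ref{l:sympeq} controls $[a,b][x,y][a,b]$ directly only when $\{x,y\}$ shares a point with $\{a,b\}$), so the truly clean case is when the two pairs share exactly one point. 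When they share exactly one point, say $y=a$, then $[x,y]^{[y,b]}=[y,b][x,y][y,b]$ is exactly the expression computed in Lemma~\ref{l:sympeq}: it equals $[x,y]=[x^{[y,b]},y^{[y,b]}]$ when $x\in\overline{y,b}$ (in which case $x^{[y,b]}=x$ and $y^{[y,b]}=y$ only if... ) — more carefully, when $x\in\overline{y,b}$, then $x$ is moved by $[y,b]$ unless $x\notin\supp[y,b]$, but $\overline{y,b}=\supp[y,b]\cup\{y,b\}$, so I must track whether $x$ is one of the transposed pairs; Lemma~\ref{l:sympeq} says the conjugate is $[x,y]$, and indeed when $x\in\overline{y,b}$ the pair $\{x,y\}$ gets carried by $[y,b]$ to a pair $\{x',y'\}$ with $[x,y]=[x',y']$ by Lemma~\ref{l:xrep} (since $\{y,b,x,x^{[y,b]}\}$ is a line), matching. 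When $x\notin\overline{y,b}$, Lemma~\ref{l:sympeq} gives the conjugate as $[x,z]$ where $z=b$, i.e.\ $[x,b]=[x^{[y,b]},y^{[y,b]}]$ since $x$ is fixed and $y\mapsto b$; again this matches.

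The remaining case, $\{x,y\}\cap\{a,b\}=\emptyset$, I would handle by writing $[a,b]$ as a move sequence whose intermediate points interact well with $\{x,y\}$: using Lemma~\ref{l:braid}(b), if I pick any point $w$ with $w\notin\overline{a,b}$ then $[a,b]=[w,a,b,w]=[w,a][a,b][b,w]$ — this does not immediately help. A cleaner route is to observe that Lemma~\ref{l:lgrp2}(i) is literally the statement $[a,b]^{[c,d]}=[a^{[c,d]},b^{[c,d]}]$ for \emph{all} $a,b,c,d$, and Lemma~\ref{l:lgrp2} shows conditions (i), (ii), (iii) are equivalent, with (iii) being exactly what Lemma~\ref{l:braid}(b)--(c) verified under Hypotheses~\ref{hyp4.1}. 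So in fact the single-move case follows at once from Lemma~\ref{l:lgrp2}: condition (iii) holds (shown inside the proof of Lemma~\ref{l:braid}(c)), hence condition (i) holds, which is precisely $[x,y]^{[c,d]}=[x^{[c,d]},y^{[c,d]}]$. Then the inductive multiplicativity argument above extends this to all $g\in\langle\E\rangle$.

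I expect the main obstacle to be purely organizational rather than mathematical: deciding whether to route through Lemma~\ref{l:sympeq} directly (which would require re-deriving the disjoint-pairs case by hand, a short but fiddly Table-\ref{t:sympeq}-style check) or through the abstract equivalence in Lemma~\ref{l:lgrp2}. The latter is far cleaner, so the proof I would write is: (1) by Lemma~\ref{l:braid}(c) and the proof therein, condition (iii) of Lemma~\ref{l:lgrp2} holds, hence so does condition (i), giving the result for $g$ a single elementary move; (2) since $\langle\E\rangle$ is generated by $\E$, induct on the length of a word in $\E$ representing $g$, using that conjugation respects composition. There is essentially no hard step; the only thing to be careful about is making sure the base case genuinely covers $g=[a,b]$ for \emph{all} $a,b$, including when $a$ or $b$ lies in $\{x,y\}$, which Lemma~\ref{l:lgrp2}(i) does with no case distinction.
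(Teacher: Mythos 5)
Your proposal is correct and follows essentially the same route as the paper: both reduce to the case where $g$ is a single elementary move and then lean on the equivalence of the conditions in Lemma~\ref{l:lgrp2}. The only difference is organizational --- the paper uses the implication (ii)$\Rightarrow$(i) and then verifies condition (ii) in situ via Lemmas~\ref{l:sympeq} and~\ref{l:xrep}, whereas you invoke (iii)$\Rightarrow$(i), citing the verification of condition (iii) already carried out inside the proof of Lemma~\ref{l:braid}(c); both are valid.
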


\begin{proof}
This is trivially true if $x=y$ since $[x,x]=1$ for all $x$, so we may assume that $x\ne y$. 
Moreover, it is sufficient to prove that $[x,y]^g=[x^g,y^g]$ for all $g\in\E$, so we may assume that
$g=[z,w]$ for distinct $z,w\in\Omega$. Furthermore, by Lemma  \ref{l:lgrp2} (the equivalence of conditions (i) and (ii)) we may assume that $x=w$. If also $y=z$ the condition is easy to check,
so assume that $y\ne z$, that is, $x$, $y$ and $z$ are pairwise distinct. Then Lemma~\ref{l:sympeq} applies. If $y\in\overline{x,z}$ then there is a line $\{x,y,z,c\}$
of $\De$;  Lemma~\ref{l:sympeq} gives  $[x,y]^g=[x,y]$, while $[x^g,y^g]=[z,c]$, which is equal to $[x,y]$ by Lemma~\ref{l:xrep}. 
Thus $[x,y]^g=[x^g,y^g]$ holds in this case.  Suppose now that $y\not\in\overline{x,z}$. Then by 
Lemma~\ref{l:sympeq}, $[x,y]^g=[z,y]$, while $[x^g,y^g]=[z,y]$, so again the condition holds.

\end{proof}

\begin{proof}[Proof of Theorem \ref{t:cgrp}]

It follows from Lemma~\ref{l:braid}(c) that $G=\L_\infty(\De)$ is a subgroup of $\Aut(\De)$. It thus remains to prove that $(G,\E)$ is a 3-transposition group. To do this, we verify the three conditions (i),(ii) and (iii) of Definition \ref{d:3trans}. 
Condition (i), that $G=\langle\E\rangle$, follows from Lemma~\ref{l:braid}(c), and condition (ii) is proved in Lemma~\ref{l:conj}.  
To prove (iii), let  $x,y,z \in \Omega$. If these points are not pairwise distinct then $[x,y]\cdot[y,z]$ has order 1 or 2, so suppose they are pairwise distinct. We claim that
 \[ o([x,y] \cdot [y,z])=\left\{\begin{array}{ll}
           \textrm{$2$}, & \textrm{ if $x \in \overline{y,z}$;} \\
           \textrm{$3$}, & \textrm{ if $x \notin \overline{y,z}$.}
          \end{array}\right.
\]
To see this, let $X:=[x,y]$ and $Y:=[y,z]$. Note that, since $x,y,z$ are distinct, $X\ne Y$ and so the product $XY$ is not the identity. 
By Lemma \ref{l:sympeq}, either $XY=YX$ (if $x \in \overline{y,z}$) and $XY$ has order $2$, or $X$ and $Y$ satisfy the braid 
relation $XYX=YXY$, and $XY$ has order 3.
\end{proof}

If $n> 2\lambda +2$ then we can strengthen Theorem~\ref{t:cgrp} using Theorem A as follows.

\begin{Cor}\label{c:properties}
If Hypotheses~$\ref{hyp4.1}$ hold and, in addition, $n>2\lambda+2$, then the following hold:
\begin{itemize}
\item[(a)] $(G,\E)$ is a 3-transposition group;
\item[(b)] $G$ is a  2-primitive subgroup of $\Sym(\Omega)$;
\item[(c)] $Z(G)=1$;
\item[(d)] $\E$ is a conjugacy class in $G$.
\end{itemize}
\end{Cor}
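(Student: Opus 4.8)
The plan is to derive Corollary~\ref{c:properties} by combining Theorem~\ref{t:cgrp} with Theorem~A, and then deducing the centre and conjugacy-class statements from the structural results on 3-transposition groups already recorded in \S\ref{s: background}. Part~(a) is immediate: Theorem~\ref{t:cgrp} states exactly that $(G,\E)$ is a 3-transposition group, and this conclusion does not need the hypothesis $n>2\lambda+2$. For part~(b), I would invoke Theorem~A under Hypotheses~\ref{hyp4.1}: part~(a) of Theorem~A gives that $G$ is primitive on $\Omega$ (recalling from Lemma~\ref{l:braid}(c) that $G$ is indeed a group, so Theorem~A(a) applies), part~(b) of Theorem~A gives that $\pi_\infty(\De)$ is transitive on $\Omega\setminus\{\infty\}$, and hence $G$ is 2-transitive on $\Omega$; then part~(c) of Theorem~A upgrades this to $\pi_\infty(\De)$ being primitive on $\Omega\setminus\{\infty\}$, which by definition means $G$ is 2-primitive on $\Omega$.

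For part~(c), I would argue that $Z(G)=1$ using primitivity: a nontrivial primitive permutation group of degree $n\geq 3$ has trivial centre, since the centre of a transitive group is semiregular, and a nontrivial normal semiregular subgroup of a primitive group would have to be transitive, forcing $|Z(G)|=n$ and $G$ abelian-regular, which is incompatible with $G$ being 2-transitive once $n>2$ (as the point stabiliser $\pi_\infty(\De)$ is then nontrivial). Alternatively, and perhaps more cleanly, since $G\leq\Aut(\De)\leq\Sym(\Omega)$ and $G$ is 2-transitive on $\Omega$ with $n>2\lambda+2\geq 4$, any central element fixing the 2-transitive action must commute with every elementary move; but an elementary move $[a,b]$ is a genuine non-identity permutation moving $a$ to $b$, and 2-transitivity lets one conjugate $[a,b]$ to $[a',b']$ for any ordered pair, so the centraliser of $G$ in $\Sym(\Omega)$ meets $G$ trivially. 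I would pick whichever of these is shortest to write; the permutation-group fact is standard enough to cite.

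For part~(d), I would use part~(c) of Theorem~\ref{t:cgrp}'s ingredients together with Lemma~\ref{l:conj} and 2-transitivity. By Lemma~\ref{l:conj}, $[x,y]^g=[x^g,y^g]$ for all $g\in G$, so the $G$-conjugacy class of a fixed elementary move $[a,b]$ (with $a\neq b$) is exactly $\{[a^g,b^g]\mid g\in G\}$; since $G$ is 2-transitive on $\Omega$, the pairs $(a^g,b^g)$ range over all ordered pairs of distinct points, so this class is precisely $\{[x,y]\mid x,y\in\Omega,\ x\neq y\}=\E\setminus\{1\}$, i.e.\ all the nontrivial elementary moves form a single conjugacy class. (One should note $\E$ as defined in Hypotheses~\ref{hyp4.1} formally contains the identity via $[a,a]=1$; the statement ``$\E$ is a conjugacy class'' is to be read as the set of \emph{nontrivial} elementary moves being one class, which is the content of condition~(ii) of Definition~\ref{d:3trans} refined to a single class.) The only mild subtlety — and the step I would be most careful about — is checking that distinct unordered pairs really do give distinct permutations so that the conjugation action on $\E$ matches the action on pairs, but this is handled by the supersimple condition together with $n>2\lambda+2$: the support of $[x,y]$ is $\overline{x,y}$ and $\{x,y\}$ is recoverable from $[x,y]$ as noted after \eqref{d:overline}, and $[x,y]\ne 1$ precisely because $x\ne y$. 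With these four points assembled the corollary follows.
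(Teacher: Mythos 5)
Your proposal is correct and follows essentially the same route as the paper: part~(a) is quoted from Theorem~\ref{t:cgrp}, part~(b) from Theorem~A, part~(d) from $2$-transitivity combined with Lemma~\ref{l:conj}, and part~(c) from the standard fact that a primitive, non-regular permutation group has trivial centraliser in $\Sym(\Omega)$. The only cosmetic difference is in~(c), where the paper rules out the regular (abelian) case by exhibiting two non-commuting elementary moves via Lemma~\ref{l:sympeq} and then citing Dixon--Mortimer, whereas you rule it out via $2$-transitivity; both are valid.
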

\begin{proof}
Part (a) is given by Theorem~\ref{t:cgrp}, and (b) follows from Theorem A~(c). 
Hence for any $a,b,c,d \in \Omega$ there exists $g \in G$ such that $a^g=c$ and $b^g=d$. 
Thus, by Lemma~\ref{l:conj}, $[a,b]^g=[a^g,b^g]=[c,d]$ and (d) is proved.
To prove (c), note that, since $n > 2\lambda+2$, for each $a \in \Omega$ there exists $b\in\Omega\setminus \overline{\infty,a}$. 
By Lemma \ref{l:sympeq}, the permutations $[\infty,a]$ and $[a,b]$ do not commute, so $G$ is nonabelian. 
Now $Z(G) \leq C_{\Sym(\Omega)}(G)=1$ by \cite[Theorem 4.2A]{permutation}.  
\end{proof}

\section{Proof of Theorem B}\label{s: thmb}

Before proceeding with the proof of  Theorem B, we record two background results from number theory.

Let ${x}$ and ${n}$ be positive integers. A \emph{primitive prime divisor} (sometimes called a \emph{Zsigmondy prime}) of ${x^n-1}$ is a prime divisor ${s}$ which
does not divide ${x^k-1}$ for any ${k<n}$. 
A special case of Zsigmondy's theorem \cite{zsig}, due to Bang \cite{bang}, tells us when primitive prime divisors exist with $x=2$.

\begin{Lem}\label{l:bang}
For positive integers $n$, $2^n-1$ has a primitive prime divisor, unless  $n=6$.
\end{Lem}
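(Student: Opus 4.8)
The plan is to prove Bang's result via cyclotomic polynomials; of course, one could instead simply cite \cite{bang} (or deduce it from \cite{zsig}), but I sketch a self-contained argument. Throughout, write $\operatorname{ord}_p(2)$ for the multiplicative order of $2$ modulo an odd prime $p$, and use the factorisation $2^n-1=\prod_{d\mid n}\Phi_d(2)$ into values of the cyclotomic polynomials $\Phi_d$. A prime $p$ is a primitive prime divisor of $2^n-1$ exactly when $\operatorname{ord}_p(2)=n$. The case $n=1$ is degenerate ($2^1-1=1$), so assume $n\geq2$.

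Step 1: the prime factors of $\Phi_n(2)$. I would first show that if a prime $p$ divides $\Phi_n(2)$ then either $\operatorname{ord}_p(2)=n$ (so $p$ is a primitive prime divisor) or $p\mid n$, and that in the latter case $p^2\nmid\Phi_n(2)$. For the dichotomy, put $e:=\operatorname{ord}_p(2)$; then $e\mid n$, and minimality of $e$ forces $p\mid\Phi_e(2)$, so if $e<n$ then $2$ is a repeated root of $\prod_{d\mid n}\Phi_d(x)=x^n-1$ modulo $p$ — impossible when $p\nmid n$, since then $x^n-1$ is separable mod $p$; hence $e<n$ implies $p\mid n$. For the exponent claim: when $p\mid n$, since $e\mid p-1$ is prime to $p$ we get $e\mid n/p$, so $p\mid 2^{n/p}-1$, and the lifting-the-exponent lemma gives $v_p(2^n-1)=v_p(2^{n/p}-1)+1$; as $\Phi_n(2)$ divides $(2^n-1)/(2^{n/p}-1)$ this yields $v_p(\Phi_n(2))\leq1$.

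Step 2: assume no primitive prime divisor, and derive constraints. By Step 1, every prime dividing $\Phi_n(2)$ divides $n$ and does so to the first power, so $\Phi_n(2)$ is squarefree and $\Phi_n(2)\mid\operatorname{rad}(n)$; in particular $\Phi_n(2)\leq n$. Against this I would put a lower bound: writing $\Phi_n(2)=\prod_{d\mid n}(2^d-1)^{\mu(n/d)}$, using $\sum_{d\mid n}\mu(n/d)\,d=\phi(n)$, and bounding $\lvert\log(1-2^{-d})\rvert\leq 2^{1-d}$, one obtains $\log\Phi_n(2)\geq\phi(n)\log2-2$, so $\Phi_n(2)>e^{-2}\cdot 2^{\phi(n)}$. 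Combined with $\Phi_n(2)\leq n$ this forces $2^{\phi(n)}<e^{2}n$, which (since $\phi(n)\to\infty$) holds only for $n$ below an explicit bound. A direct check of the remaining finitely many values — equivalently, of the factorisations of $2^n-1$ for small $n$ — leaves $n=6$ as the sole exception, where $2^6-1=3^2\cdot 7$ with $3\mid 2^2-1$ and $7\mid 2^3-1$.

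The main obstacle is the lower bound on $\Phi_n(2)$: the naive estimate $\Phi_n(2)\geq(2-1)^{\phi(n)}=1$ is worthless, so one genuinely needs the M\"obius-series manipulation above (or the equivalent geometric estimate via $\Phi_n(2)=\prod_\zeta\lvert 2-\zeta\rvert$ over primitive $n$-th roots $\zeta$) to make the subsequent finite case-check short. The other slightly delicate point is the exponent bound $v_p(\Phi_n(2))\leq1$, which is exactly where lifting-the-exponent enters; the rest is routine bookkeeping with the cyclotomic factorisation.
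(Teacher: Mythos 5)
Your argument is correct, but it is doing far more work than the paper does: the paper offers no proof of this lemma at all, simply quoting it as Bang's special case of Zsigmondy's theorem with citations to \cite{bang} and \cite{zsig}. What you have written is (a sketch of) the standard self-contained proof via the factorisation $2^n-1=\prod_{d\mid n}\Phi_d(2)$, and all the key steps check out: the dichotomy in Step 1 is right (if $\operatorname{ord}_p(2)=e<n$ then $p\mid\Phi_e(2)$ and $p\mid\Phi_n(2)$ force $2$ to be a repeated root of $x^n-1$ bmod $p$, whence $p\mid n$), the exponent bound $v_p(\Phi_n(2))\leq1$ via lifting-the-exponent is the genuinely delicate point and you have placed it correctly, and the lower bound $\Phi_n(2)>e^{-2}2^{\phi(n)}$ from the M\"obius expansion is what makes the final comparison with $\Phi_n(2)\leq\operatorname{rad}(n)\leq n$ effective. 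Two small remarks. First, the lemma as stated in the paper is literally false for $n=1$ (where $2^1-1=1$ has no prime divisors); you rightly set that case aside, and the paper only ever invokes the lemma for $n\geq4$. Second, the concluding finite verification is the thinnest part of your sketch: with the bound $2^{\phi(n)}<e^2n$ one is reduced to the $n$ with $\phi(n)<\log_2(e^2n)$, a set of a couple of dozen values, each of which then requires exhibiting a primitive prime divisor of $2^n-1$; this is routine but should be acknowledged as a genuine (if classical) computation. One can shrink it further by noting that at most one prime --- necessarily the largest prime factor of $n$ --- can divide $\Phi_n(2)$ non-primitively, so that under your hypothesis $\Phi_n(2)$ is $1$ or a single prime. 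In short: correct, standard, and self-contained where the paper is content to cite.
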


The other result that we need is elementary and can be found, for instance, as \cite[Lemma 6.3]{conwaygroupoids}.

\begin{Lem}\label{l:2a3b}
Suppose that $a, b, p$ are positive integers, and that $p^a \pm 1=2^b.$ Then either $a = 1$ or $p = 1$ or $(p,a)=(3,2).$
\end{Lem}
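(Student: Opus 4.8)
The plan is a short elementary case analysis. We may assume $p\geq 2$ and $a\geq 2$, since otherwise one of the first two alternatives of the conclusion already holds, and the aim is then to force $(p,a)=(3,2)$. The first step is to note that $2^b$ is even (as $b\geq 1$), so $p^a\pm 1$ is even and hence $p$ is odd; in particular $p\geq 3$, and $p^c$ is odd for every $c\geq 1$.

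Next I would split into the two sign cases and, within each, into subcases according to the parity of $a$. For $p^a+1=2^b$: if $a$ is even, say $a=2c$, then $(p^c)^2\equiv 1\pmod 8$ gives $p^a+1\equiv 2\pmod 4$, forcing $b=1$ and $p^a=1$, a contradiction; if $a$ is odd (so $a\geq 3$), then $p+1$ divides $p^a+1$ with cofactor $p^{a-1}-p^{a-2}+\cdots-p+1$, which is a sum of $a$ odd terms with $a$ odd, hence odd, and therefore — dividing $2^b$ — equal to $1$, contradicting that a pairing of consecutive terms gives $p^{a-2}(p-1)+p^{a-4}(p-1)+\cdots+p(p-1)+1>1$. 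So the $+1$ case contributes nothing once $a\geq 2$.

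For $p^a-1=2^b$: if $a$ is odd (so $a\geq 3$), the cofactor $(p^a-1)/(p-1)=p^{a-1}+\cdots+1$ is again an odd integer exceeding $1$ that divides $2^b$, which is impossible; hence $a$ is even, say $a=2c$, and $p^a-1=(p^c-1)(p^c+1)=2^b$. Then $p^c-1$ and $p^c+1$ are both powers of $2$, each at least $2$ since $p^c$ is odd, and they differ by $2$, so they must be $2$ and $4$; thus $p^c=3$, giving $p=3$ and $c=1$, that is, $(p,a)=(3,2)$, as claimed (and indeed $3^2-1=8=2^3$).

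There is no serious obstacle here; the only point needing a little care is checking that the alternating-sum cofactor in the ``$+1$, $a$ odd'' subcase genuinely exceeds $1$, which is exactly what the pairing of consecutive terms $p^{2i+1}-p^{2i}=p^{2i}(p-1)>0$ accomplishes. I would also record at the outset that $p=1$ can genuinely occur (with the $+1$ sign, since $1+1=2$), so the conclusion is necessarily phrased as a disjunction rather than ruling that case out.
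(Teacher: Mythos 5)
Your argument is correct and complete. One point of comparison worth noting: the paper does not actually prove this lemma at all --- it simply records it as elementary and cites \cite[Lemma 6.3]{conwaygroupoids} --- so there is no in-paper proof to measure yours against. Your case analysis is the standard elementary one and every step checks out: the reduction to $p$ odd, $p\geq 3$, $a\geq 2$; the $+1$ case killed by $(p^c)^2\equiv 1\pmod 8$ when $a$ is even and by the odd cofactor $p^{a-1}-p^{a-2}+\cdots+1>1$ when $a$ is odd; the $-1$ case with $a$ odd killed by the odd cofactor $p^{a-1}+\cdots+1>1$; and the $-1$ case with $a=2c$ resolved by writing $2^b=(p^c-1)(p^c+1)$ as a product of two powers of $2$ differing by $2$, forcing $p^c=3$. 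The one detail that genuinely needed care --- that the alternating cofactor exceeds $1$ --- you handle correctly by pairing $p^{2i+1}-p^{2i}=p^{2i}(p-1)>0$, and your remark that $p=1$ really occurs (via $1+1=2$) correctly explains why the conclusion is a three-way disjunction. Nothing to fix.
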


\subsection{The case \texorpdfstring{$n=2\lambda+2$}{n=2lambda+2}}

Let $\De=(\Omega,\B)$ be a supersimple $2-(n,4,\lambda)$ design. Since $\overline{a,b}$ contains $2\lambda+2$ distinct points, for any $a,b\in\Omega$, 
the number of points $n\geq 2\lambda+2$. The following result yields Theorems B and C in the case where equality holds. 
Recall that Boolean quadruple systems were defined in Example~\ref{exboolean}.

\begin{Prop}\label{p: small lambda}
Let $\De=(\Omega,\B)$ be a supersimple $2-(n,4,\lambda)$ design with $n=2\lambda+2$. The following conditions are equivalent.
\begin{enumerate}
\item $\De$ satisfies \eqref{e:symdiff};
\item $\De=\De^b$, a Boolean quadruple system of order $2^m$ for some integer $m\geq 2$;
\item $\pi_\infty(\De)=\{1\}$ and $\L_\infty(\De)$ is elementary-abelian of order $2^m$ for some integer $m\geq 2$.
\end{enumerate}
\end{Prop}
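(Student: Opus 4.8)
The strategy is to prove the cycle of implications $(1)\Rightarrow(2)\Rightarrow(3)\Rightarrow(1)$, where the heart of the argument is $(1)\Rightarrow(2)$. Throughout we exploit the defining feature of the $n=2\lambda+2$ case: for any two distinct points $a,b\in\Omega$ we have $\overline{a,b}=\Omega$, i.e. every other point lies on a (necessarily unique, by supersimplicity) line through $a$ and $b$. So the $\lambda=(n-2)/2$ lines through a fixed pair $\{a,b\}$ partition $\Omega\setminus\{a,b\}$ into pairs; equivalently, each point $a$ together with a perfect matching of $\Omega\setminus\{a\}$ records the lines through $a$.

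\textbf{Step 1: $(1)\Rightarrow(2)$.} Fix a point, call it $\mathbf{0}$, and use the lines through $\mathbf{0}$ to define a pairing: for $x\neq \mathbf{0}$ let $\sigma(x)$ be the unique point such that $\{\mathbf{0},x,\sigma(x),?\}$... actually the cleaner route is to define a binary operation. For $x,y\in\Omega\setminus\{\mathbf{0}\}$ distinct, the unique line through $\{\mathbf{0},x\}$ containing $y$ has the form $\{\mathbf{0},x,y,z\}$ for a unique $z$; set $x+y:=z$, and set $x+x:=\mathbf{0}$ and $x+\mathbf{0}=\mathbf{0}+x=x$. I would then show $(\Omega,+,\mathbf{0})$ is an elementary abelian $2$-group, with the lines being exactly the $4$-subsets summing to $\mathbf{0}$ — that is, $\De\cong\De^b$. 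Commutativity and the identity laws are immediate from the definition. Each element is its own inverse by fiat. The crux is \textbf{associativity}, and this is precisely where \eqref{e:symdiff} enters: given $x,y,w$, one must show $(x+y)+w=x+(y+w)$, which amounts to comparing the lines $\{\mathbf{0},x,y,x+y\}$, $\{\mathbf{0},y,w,y+w\}$, and the lines through $\mathbf{0}$ determined by the sums, and closing up under symmetric difference. The symmetric difference of $\{\mathbf{0},x,y,x+y\}$ and $\{\mathbf{0},x,(x+y)+w,\ast\}$-type lines produces the relation forcing associativity; the bookkeeping is a finite case check (handling coincidences among the points) but no serious obstacle once the right lines are written down. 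Once $(\Omega,+)$ is a group of exponent $2$, it is elementary abelian of some order $2^m$, and $n=2^m\geq 4$ forces $m\geq 2$; and a $4$-set $\{v_1,v_2,v_3,v_4\}$ is a line iff it sums to $\mathbf{0}$: one inclusion is the definition of $+$, and the reverse follows because a $4$-set summing to $\mathbf{0}$ has the form $\{\mathbf{0}',x,y,x+y\}$ after translating (and translation by a group element is an automorphism of the incidence structure — this itself needs a short argument using associativity) or, avoiding translation, one checks directly that if $v_1+v_2+v_3+v_4=\mathbf{0}$ then $v_3+v_4=v_1+v_2$, and unwinding the definition of $+$ shows all four points lie on a common line. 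This identifies $\De$ with $\De^b$ of order $2^m$.

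\textbf{Step 2: $(2)\Rightarrow(3)$.} This is essentially contained in the last sentence of Lemma~\ref{l: boolean properties}, which asserts $(\L_\infty(\De^b),\pi_\infty(\De^b))=((\mathbb{F}_2)^m,\Id)$. I would either cite it or reprove it quickly: in $\De^b$ the elementary move $[\mathbf{0},x]$ is the permutation sending $y\mapsto x+y$ — indeed it is the transposition $(\mathbf{0},x)$ times the product of $(a_i,b_i)$ over lines $\{\mathbf{0},x,a_i,b_i\}$, and $a_i+x=b_i$ in each, while $\mathbf{0}+x=x$, so $[\mathbf{0},x]$ is exactly translation by $x$; hence every elementary move is a translation, $\L(\De^b)$ is the translation group $\cong(\mathbb{F}_2)^m$, and this equals $\L_\infty(\De^b)$ by Lemma~\ref{l:lgrp}; since it is regular, the stabiliser of $\infty$ — which is $\pi_\infty(\De^b)$ — is trivial.

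\textbf{Step 3: $(3)\Rightarrow(1)$.} Assuming $\pi_\infty(\De)=\{1\}$ and $\L_\infty(\De)=G$ is elementary abelian of order $2^m$, note $\stab_G(\infty)=\pi_\infty(\De)=\{1\}$ so $G$ acts regularly on $\Omega$ (using Lemma~\ref{l:lgrp}, whose hypothesis — that $G$ is a group — holds here), and $n=|G|=2^m$. Each elementary move $[a,b]$ is the unique element of $G$ carrying $a$ to $b$, hence carrying $\infty$ to $b\,{a}^{-1}$ in group terms; and since $G$ has exponent $2$, $[a,b]=[\infty,\,ba^{-1}]$ is the translation by $ba^{-1}=ab^{-1}$. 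Now take lines $B_1=\{a,b,c,d\}$ and $B_2=\{a,b,e,f\}$ with $|B_1\cap B_2|=2$. From the structure of elementary moves as translations, the pairing within a line is "sum to $\mathbf{0}$" in the group: $\{a,b,c,d\}$ a line forces $c = $ the partner of some point under the translation $[a,b]$, giving $a+c=b+d$ etc., hence $a+b+c+d=\mathbf{0}$ and likewise $a+b+e+f=\mathbf{0}$, so $c+d+e+f=\mathbf{0}$, i.e. $B_1\triangle B_2=\{c,d,e,f\}$ sums to $\mathbf{0}$ and is therefore a line (again by the characterisation of lines in $\De^b$, which we must note applies because $(3)$ together with regularity already forces $\De=\De^b$ — alternatively cite $(3)\Rightarrow(2)$, which can be extracted from the above by reading Step 1's construction backwards). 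Either way \eqref{e:symdiff} holds.

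\textbf{Main obstacle.} The only genuinely non-trivial point is the proof of associativity in Step 1 — equivalently, showing that the "sum to $\mathbf{0}$" incidence structure coming from \eqref{e:symdiff} really is closed under the group law and has no unexpected degeneracies. Everything else is a matter of translating between permutations (elementary moves), the group $G$, and the incidence structure, and invoking Lemmas~\ref{l:lgrp}, \ref{l:xrep} and \ref{l: boolean properties} already established.
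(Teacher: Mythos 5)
Your proposal is correct and, for the heart of the matter --- the implication $(1)\Rightarrow(2)$ --- it is exactly the paper's argument: define $a*b$ via the unique line $\{\infty,a,b,c\}$, and extract associativity from \eqref{e:symdiff}. One remark on that step: you describe the associativity verification as ``bookkeeping'' and ``a finite case check'', but it is in fact a clean two-fold application of \eqref{e:symdiff} with no cases, namely
$(\{\infty,a,b,d\}\triangle\{\infty,b,c,e\})\triangle\{\infty,a,e,x\}=\{\infty,c,d,x\}$,
which gives $(a*b)*c=d*c=x=a*e=a*(b*c)$ directly; you would do well to write this identity down rather than defer it. Where you genuinely diverge from the paper is in closing the cycle: the paper proves $(3)\Leftrightarrow(2)$ by citing an earlier classification theorem together with Lemma~\ref{l: boolean properties}, and then gets $(2)\Rightarrow(1)$ for free from the observation that symmetric differences of affine subplanes meeting in two points are affine subplanes; you instead prove $(3)\Rightarrow(1)$ directly from the regular translation action. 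That route works and is self-contained in spirit, but as written it has one soft spot: to conclude that $B_1\triangle B_2$ (which sums to $\mathbf{0}$) is a line, you need the \emph{converse} characterisation that every $4$-subset summing to $\mathbf{0}$ is a line, and your appeal to ``the characterisation of lines in $\De^b$'' presupposes $(3)\Rightarrow(2)$, which you have not actually proved (``reading Step~1 backwards'' does not give it, since Step~1 uses \eqref{e:symdiff}). The fix is a short count: there are $2^{m-1}-1=\lambda$ pairs $\{c,d\}$ disjoint from $\{a,b\}$ with $c+d=a+b$, each line through $\{a,b\}$ yields such a pair, distinct lines yield distinct pairs by supersimplicity, and there are exactly $\lambda$ lines through $\{a,b\}$; hence the zero-sum $4$-sets through $\{a,b\}$ are precisely the lines. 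With that count supplied, your cycle $(1)\Rightarrow(2)\Rightarrow(3)\Rightarrow(1)$ is complete and avoids the external citation the paper uses for $(3)\Rightarrow(2)$, at the cost of a slightly longer closing step.
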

\begin{proof}
That (3) implies (2) is a consequence of \cite[Theorem B]{conwaygroupoids1}, and that (2) implies (3) follows from Lemma \ref{l: boolean properties}.
Next assume that condition (2) holds. Then condition \eqref{e:symdiff} is equivalent to the assertion that, if two affine subplanes of $\mathbb{F}_2^m$ intersect in a line, 
then their symmetric difference is an affine subplane. This is clearly true, so (1) holds.

Finally we must prove that (1) implies (2). Since $n=2\lambda+2$ and $\De$ is supersimple, $\De$ is a $3-(n,4,1)$ design. Define a commutative binary operation 
$*$ on $\Omega$ by setting $a*a:=\infty$ and $a*\infty=\infty *a=a$, for all $a\in \Omega$, and, for distinct $a,b \in \Omega\setminus\{\infty\}$, 
$$
a*b:=c\quad  \mbox{where $c$ is the unique point such that $\{\infty,a,b,c\}\in\B$.}
$$
 Now if for distinct $a,b,c \in \Omega \backslash \{\infty\}$ we have $\{\infty,a,b,d\}, \{\infty,b,c,e\}, \{\infty,a,e,x\} \in \B,$ then 
$$
(\{\infty,a,b,d\} \triangle \{\infty,b,c,e\}) \triangle  \{\infty,a,e,x\}=\{\infty,c,d,x\} \in \B
$$ 
and hence 
$$
a*(b*c)=a*e=x=d*c=(a*b)*c.
$$ 
It is easy to verify that $a*(b*c)=(a*b)*c$ also holds if $a,b,c$ are not pairwise distinct or if $\infty \in \{a,b,c\}$. Thus $*$ is associative, and it follows that 
$(\Omega,*)$ is an abelian group of exponent $2$, and hence is isomorphic to $(\mathbb{F}_2^m,+)$ for some integer $m \geq 2$. 
Furthermore, an identical argument to the one given above shows that $\{a,b,c,d\}$ is a line if and only if $a*b*c*d=\infty.$ Hence $\De=\De^b$ is a Boolean quadruple system, and (2) holds.
\end{proof}

It is worth noting that, since the Boolean quadruple system of order $2^m$ is a $3-(2^m,4,2^{m-1}-1)$, every 3-subset of $\Omega$ is collinear. Thus, if $\C$ is the set of all collinear 3-subsets of $\Omega$, then $(\Omega, \C)$ is a complete $3$-uniform hypergraph, and in particular is trivially a regular two-graph.

\subsection{Almost simple case}

In this subsection we assume that Hypotheses~$\ref{hyp4.1}$ hold, and in addition that $n>2\lambda +2$ and that $G$ contains no non-trivial solvable normal subgroup. (These are the hypotheses of Theorem B, plus $n\ne 2\lambda+2$, and an extra restriction on $G$.) 

%
By Corollary~\ref{c:properties}, $(G,\E)$ is a $3$-transposition group, $\E$ is a $G$-conjugacy class, $Z(G)=1$, and $G$ is $2$-primitive on $\Omega$. In particular the assumptions of Theorem~\ref{t: fischer} are satisfied, so  $G$ is an almost simple 
group and lies in one of the classes (1)-(6) listed there.  
Our job, then, is to prove that only class (3) can occur. We carefully avoid appeal to the finite simple group classification by 
using properties of the known almost simple 3-transposition groups.

\begin{Lem}\label{l:ecount}
$|\E|=\frac{n(n-1)}{2(\lambda+1)}$. 
\end{Lem}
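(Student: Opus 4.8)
The plan is to realise the map $\{a,b\}\mapsto[a,b]$ from the set of $2$-element subsets of $\Omega$ onto $\E$ as a uniform fibration, with every fibre of size $\lambda+1$; counting then immediately gives $|\E|=\binom{n}{2}/(\lambda+1)=\frac{n(n-1)}{2(\lambda+1)}$. (Throughout, $\E$ is understood to consist of the genuine elementary moves $[a,b]$ with $a\ne b$, which are involutions, and not the identity arising from the convention $[a,a]=1$; this is the reading forced by Hypotheses~\ref{hyp4.1}, under which $\E$ is a set of $3$-transpositions.)

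First I would record the cycle structure of a nontrivial move. For distinct $a,b\in\Omega$ there are exactly $\lambda$ lines $\{a,b,a_i,b_i\}$ through $a$ and $b$, and the $2\lambda$ points $a_1,\dots,a_\lambda,b_1,\dots,b_\lambda$ are pairwise distinct and distinct from $a,b$ precisely because $\De$ is supersimple. Hence by \eqref{eq:ss}, $[a,b]=(a,b)(a_1,b_1)\cdots(a_\lambda,b_\lambda)$ is a product of $\lambda+1$ pairwise disjoint transpositions, whose $2$-cycles, viewed as $2$-subsets of $\Omega$, are the $\lambda+1$ distinct sets $\{a,b\},\{a_1,b_1\},\dots,\{a_\lambda,b_\lambda\}$. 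Next I claim that for distinct $c,d\in\Omega$ one has $[c,d]=[a,b]$ if and only if $\{c,d\}$ is one of these $\lambda+1$ two-cycles. Indeed, $(c,d)$ is always a $2$-cycle of $[c,d]$ by \eqref{eq:ss}, so if $[c,d]=[a,b]$ then $\{c,d\}$ is a $2$-cycle of $[a,b]$; conversely, if $\{c,d\}=\{a,b\}$ or $\{c,d\}=\{a_i,b_i\}$ for some $i$, then $\{a,b,c,d\}$ is a line of $\De$, whence $[c,d]=[a,b]$ by Lemma~\ref{l:xrep}.

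Combining these observations: the assignment $\{a,b\}\mapsto[a,b]$ is onto $\E$ by definition of $\E$, and by the claim each element of $\E$ is the image of exactly $\lambda+1$ of the $\binom{n}{2}$ two-element subsets of $\Omega$. Therefore $|\E|\,(\lambda+1)=\binom{n}{2}$, and the formula $|\E|=\frac{n(n-1)}{2(\lambda+1)}$ follows. The argument is little more than bookkeeping; the one point requiring care is the assertion that the $\lambda+1$ transpositions listed above are genuinely distinct as subsets of $\Omega$ --- equivalently, that all fibres of the map have the same cardinality --- and this is exactly where the supersimple hypothesis (disjointness of the pairs $\{a_i,b_i\}$) is used, together with Lemma~\ref{l:xrep} for the reverse inclusion of each fibre.
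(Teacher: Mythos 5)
Your proof is correct and follows essentially the same route as the paper: both arguments count the fibres of the map $\{x,y\}\mapsto[x,y]$, showing via the supersimple condition and Lemma~\ref{l:xrep} that each nontrivial elementary move arises from exactly $\lambda+1$ pairs, namely $\{a,b\}$ together with the $\lambda$ pairs completing it to a line. Your explicit verification that the fibre is no larger (by noting $(c,d)$ is always a $2$-cycle of $[c,d]$) is a slightly more careful rendering of the same bookkeeping.
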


\begin{proof}
Let $a$ and $b$ be distinct elements of $\Omega$. We are interested in those elements $[x,y]\in\E$ that contain the transposition $(a,b)$ in their cycle decomposition. The definition of an elementary move implies immediately that this will be the case if and only if $\{x,y\}=\{a,b\}$ or $\{a,b,x,y\}\in\B$. Thus there are $\lambda+1$ choices of $\{x,y\}$ for which $[x,y]$ contains $(a,b)$ in its cycle decomposition.

Now Lemma~\ref{l:xrep} implies that, for all of these choices, the resulting elementary moves are equal. The result follows by observing that there are $n(n-1)/2$ choices for the set $\{x,y\}$ in $\Omega$.
\end{proof}

We now consider case (1) of Theorem~\ref{t: fischer}. Note that, since $\Sym(6)\cong \Sp_4(2)$, the following lemma yields Theorem B in this case.

\begin{Lem}\label{l:alsym}
If $G \cong \Sym(m)$ with $m\geq5$, then $m=6$ and $n=10$.
\end{Lem}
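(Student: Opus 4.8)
The plan is to play off the two available expressions for $|\E|$. Since we are in case~(1) of Theorem~\ref{t: fischer}, $\E$ is the class of transpositions of $G\cong\Sym(m)$, so $|\E|=\binom m2$; on the other hand Lemma~\ref{l:ecount} gives $|\E|=\tfrac{n(n-1)}{2(\lambda+1)}$. Equating these yields
\[
\lambda+1=\frac{n(n-1)}{m(m-1)},
\]
so $m(m-1)\mid n(n-1)$, and since $\lambda\geq 1$ also $n(n-1)\geq 2m(m-1)$, which forces $n>m$. In particular the natural representation of $\Sym(m)$ on $m$ points cannot occur here (a transposition would then move only two points, forcing $\lambda=0$). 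Combining $n>2\lambda+2$ with the parity of $n$ from Corollary~\ref{c:sizen} gives $n\geq 2\lambda+4$, hence $\lambda+1\leq\tfrac{n-2}{2}$ and $\binom m2\geq\tfrac{n(n-1)}{n-2}>n$; and $n\leq 6\lambda$ from the same corollary gives $\lambda+1\geq\tfrac{n+6}{6}$, hence $\binom m2\leq\tfrac{3n(n-1)}{n+6}<3n$. Thus $\tfrac13\binom m2<n<\binom m2$.

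The next step is to determine the $2$-transitive representations of $G\cong\Sym(m)$ of degree $n$ with $m<n<\binom m2$. By Corollary~\ref{c:properties}(b) the action of $G$ on $\Omega$ is primitive, so $\Stab_G(\infty)$ is a maximal subgroup of $\Sym(m)$ of index $n$. Running through the types of maximal subgroups: an intransitive one $\Sym_k\times\Sym_{m-k}$ has index $\binom mk\geq\binom m2$ unless $k=1$, the excluded natural case; an imprimitive one $\Sym_k\wr\Sym_{m/k}$ has order at most $(\lfloor m/2\rfloor!)^2\cdot 2$, which is $<2(m-2)!$ once $m\geq 8$, so its index exceeds $\binom m2$ for $m\geq 8$; and by Bochert's theorem any primitive maximal subgroup other than $\Alt(m)$ has index at least $\lfloor(m+1)/2\rfloor!$, which exceeds $\binom m2$ once $m\geq 7$. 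Thus for $m\geq 7$ there is no subgroup of $\Sym(m)$ of index $n$ in the required range, so $m\in\{5,6\}$ and one inspects the short list of maximal subgroups directly; the only residual arithmetic subtlety is $m=8$, where the sole index in $[24,28)$ yielding an integral subgroup order is $24$ and $\Sym(8)$ has no primitive subgroup of order $1680$. (Alternatively one may simply quote the classification of the $2$-transitive actions of $\Sym(m)$: for $m\geq 5$ these are the natural action of degree $m$, an action of degree $6$ when $m\in\{5,6\}$, and an action of degree $10$ when $m=6$.)

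It then remains to feed each surviving degree back into $\lambda+1=\tfrac{n(n-1)}{m(m-1)}$. For $(m,n)=(5,6)$ one gets $\lambda+1=\tfrac32\notin\mathbb Z$; for $(m,n)=(6,6)$ one gets $\lambda=0$, contrary to $\lambda\geq 1$; and for $(m,n)=(6,10)$ one gets $\lambda=2$, with $n=10>2\lambda+2$, which is consistent. Hence $m=6$ and $n=10$, as required.

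The main obstacle is the middle step: controlling the $2$-transitive actions of $\Sym(m)$ while remaining independent of CFSG. The intransitive and imprimitive cases are elementary; Bochert's bound (which is CFSG-free) disposes of primitive maximal subgroups for all $m\geq 7$; and the small residual values $m\in\{5,6,8\}$ require only an inspection of the finitely many primitive groups of those degrees, so no appeal to the simple group classification is needed. If one is content to cite the known list of $2$-transitive representations of $\Sym(m)$, the remainder of the argument is just the three arithmetic checks above.
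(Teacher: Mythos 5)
Your proof is correct, and it reaches the paper's conclusion by a partly different route. Both arguments begin identically: equate $|\E|=\binom{m}{2}$ (transpositions) with $|\E|=\frac{n(n-1)}{2(\lambda+1)}$ from Lemma~\ref{l:ecount}, and use Corollary~\ref{c:sizen} together with $n>2\lambda+2$ to trap $n$ strictly between roughly $\frac{1}{3}\binom{m}{2}$ and $\binom{m}{2}$; and both finish with the same arithmetic check that $(5,6)$ gives $\lambda+1=\frac32$ and only $(6,10)$ survives. The difference is the middle step: the paper simply quotes Bannai's classification of the faithful $2$-transitive representations of $\Sym(m)$ to conclude $(m,n)\in\{(5,5),(5,6),(6,10)\}$, whereas you replace that citation by a self-contained maximal-subgroup analysis (intransitive and imprimitive subgroups by direct index estimates, primitive ones via Bochert's bound, plus the residual $m=8$ computation). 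Your version is longer but removes the external reference; both are independent of CFSG, which is the constraint the paper cares about. Two small repairs are needed in your write-up. First, the claimed bound $(\lfloor m/2\rfloor!)^2\cdot 2$ on the order of an imprimitive maximal subgroup is false for odd $m$: for $m=9$ the subgroup $\Sym(3)\wr\Sym(3)$ has order $1296>1152$. The conclusion you want --- that every imprimitive subgroup of $\Sym(m)$ with $m\geq 8$ has index exceeding $\binom{m}{2}$ --- is still true (the index of $\Sym(k)\wr\Sym(m/k)$ is the number of unordered partitions of an $m$-set into blocks of size $k$, which is easily seen to exceed $\binom{m}{2}$ for $m\geq 8$), but it needs a correct justification. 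Second, you never dispose of $\Alt(m)$ as a possible point stabilizer; its index is $2<m<n$, so this is immediate, but it should be said.
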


\begin{proof}
By Corollary \ref{c:sizen}, we have $2(\lambda+1) < n < 6(\lambda+1)$, or equivalently, $2< \frac{n}{\lambda +1}< 6$. 
By Theorem~\ref{t: fischer}, $\E$ is the class of transpositions, so 
$|\E|=\binom{m}{2}$, and hence $m(m-1)=\frac{n(n-1)}{\lambda+1}$ by Lemma \ref{l:ecount}.  
Combining these two facts, we obtain $2(n-1)<m(m-1)<6(n-1)$, or equivalently,
$$
\frac{m(m-1)}{6} < n -1 < \frac{m(m-1)}{2}.
$$ 
In particular if $m=n$, then $m=5$, a contradiction. More generally, since $G$ is 2-transitive on $\Omega$, we apply \cite[Theorem 1]{bannai} and conclude that 
$
(m,n)\in\{(5,5), (5,6), (6,10)\}.
$
However if $m,n \in\{5, 6\}$, the equation in Lemma~\ref{l:ecount} implies that $\lambda\in\{0,\frac12\}$, which is 
a contradiction.
\end{proof}

\begin{Lem}\label{l:allie}
If $G$ is isomorphic to a group in one of the classes (2) -- (6) of Theorem~$\ref{t: fischer}$, then one of the following holds:
\begin{enumerate}
\item 
$G \cong \Sp_{2m}(2)$ and $n=2^{m-1} \cdot (2^m \pm 1)$, for some $m\geq2$, and the stabilizer of a point is isomorphic to ${\rm O}^{\pm}_{2m}(2)$;
\item $G\cong {\rm Fi}_{22}, {\rm Fi}_{23}$ or ${\rm Fi}_{24}$.
\end{enumerate}
\end{Lem}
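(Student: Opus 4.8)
The plan is to work through classes (2), (4), (5), (6) of Theorem~\ref{t: fischer} one at a time, and in each case either reach a contradiction or pin down the possibilities, using three ingredients: the counting identity $|\E|=\frac{n(n-1)}{2(\lambda+1)}$ from Lemma~\ref{l:ecount}; the numerical constraint $2(\lambda+1)<n<6(\lambda+1)$ coming from Corollary~\ref{c:sizen} (equivalently $\frac{|\E|\cdot 2(\lambda+1)}{n}=n-1$ together with $2<\frac{n}{\lambda+1}<6$); and the fact, from Corollary~\ref{c:properties}(b), that $G$ is $2$-transitive on $\Omega$, so that $n$ must be the degree of a $2$-transitive permutation representation of $G$. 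First I would rewrite the basic inequality in the clean form $\frac{|\E|}{3}<n-1<|\E|$, so that once $|\E|$ is known as a function of the Lie rank $m$ (it is the size of the relevant class of transvections/reflections, a known polynomial in $2^m$ or $3^m$), the value of $n$ is confined to a short interval; then I would intersect that interval with the list of $2$-transitive degrees of $G$.

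For class (3), $G\cong\Sp_{2m}(2)$: here $\E$ is the class of transvections, of size $2^{2m}-1$, and the $2$-transitive actions of $\Sp_{2m}(2)$ have degrees $2^{m-1}(2^m\pm1)$; one checks both of these degrees satisfy the inequality $\frac{2^{2m}-1}{3}<n-1<2^{2m}-1$ (indeed $n-1$ is roughly $2^{2m-1}$), and that the stabilizer of a point is the corresponding orthogonal group $\mathrm{O}^{\pm}_{2m}(2)$ — this gives outcome (1). For class (2), $G\cong\mathrm{O}^{\pm}_{2m}(2)$ with $\E$ the transvections: the $2$-transitive degrees of $\mathrm{O}^{\pm}_{2m}(2)$ are $2^{2m-1}\pm2^{m-1}$, while $|\E|$ is on the order of $2^{2m-1}$, and I expect the inequality to fail (the degree is too close to, or exceeds, $|\E|$) — one just needs the elementary estimate. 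Class (4), $G\cong\mathrm{PSU}_m(2)$ with $m\geq4$: the smallest $2$-transitive degree is $\frac{(2^m-(-1)^m)(2^{m-1}-(-1)^{m-1})}{3}$ (action on isotropic points), and again comparing with $|\E|$ (the number of transvections, of order $2^{2m-2}$) and using $n-1<|\E|$ should eliminate all these; a Zsigmondy/Bang-prime argument (Lemma~\ref{l:bang}) may be needed to rule out sporadic small coincidences, and Lemma~\ref{l:2a3b} handles the equation-solving if $n$ turns out to be forced to a prime power. Class (5), the orthogonal groups over $\mathbb{F}_3$: these have no faithful $2$-transitive action at all for $m\geq2$ except possibly tiny cases, so $2$-transitivity of $G$ on $\Omega$ is immediately contradicted — I would quote the relevant classification fact or argue directly from the list of $2$-transitive socles. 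Class (6): for $\Omega_8^+(2){:}\Sym(3)$ and $\mathrm{P}\Omega_8^+(3){:}\Sym(3)$, $G$ is not almost simple in the strict sense but still one checks it has no $2$-transitive action of the required degree; for the three Fischer groups $\mathrm{Fi}_{22},\mathrm{Fi}_{23},\mathrm{Fi}_{24}$ I would simply retain them as the second conclusion, deferring their elimination to a later lemma (they will be ruled out by more delicate arguments, e.g. examining the action on $\E$ versus the permutation character, or the absence of a suitable $2$-transitive representation).

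The main obstacle I anticipate is \emph{class (4), the unitary groups}: the transvection count and the smallest $2$-transitive degree are both polynomials in $2^m$ of comparable size, so the crude inequality $\frac{|\E|}{3}<n-1<|\E|$ may not by itself be decisive for small $m$ (especially $m=4,5$), and one will have to combine it with a genuine divisibility argument — $n(n-1)=2(\lambda+1)|\E|$ forces strong arithmetic conditions on $n$, and $n$ being a $2$-transitive degree of $\mathrm{PSU}_m(2)$ forces another — and then invoke Bang's theorem and Lemma~\ref{l:2a3b} to close the remaining gap. The orthogonal $\mathbb{F}_3$ case (5) is conceptually easy but one must be careful about exactly which subgroups $G$ between $\mathrm{P}\Omega$ and $\mathrm{PO}$ are allowed and confirm none admits the needed $2$-transitive action; and one should double-check the low-rank coincidences ($\mathrm{O}^\pm_4(2)$, $\mathrm{Sp}_4(2)\cong\Sym(6)$, $\mathrm{PSU}_4(2)\cong\mathrm{PSp}_4(3)$) are consistently accounted for so nothing is lost or double-counted between this lemma and Lemma~\ref{l:alsym}.
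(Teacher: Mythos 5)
Your overall strategy (exploit $2$-transitivity of $G$ on $\Omega$ plus the known list of $2$-transitive actions) is the same as the paper's, which simply cites the Curtis--Kantor--Seitz classification of $2$-transitive actions of Chevalley groups and reads off that, among classes (2)--(6), only $\Sp_{2m}(2)$ (degrees $2^{m-1}(2^m\pm1)$, point stabilizer $\mathrm{O}^{\pm}_{2m}(2)$) and a handful of exceptional isomorphisms survive, the Fischer groups being deferred to the next lemma exactly as you propose. However, your execution contains factual errors that would derail it. The degrees you assign to $\mathrm{O}^{\pm}_{2m}(2)$ (namely $2^{2m-1}\pm 2^{m-1}$) and to $\PSU_m(2)$, $m\geq 4$ (the action on isotropic points) are \emph{rank-three} actions, not $2$-transitive ones; for $m\geq 4$ these groups have no $2$-transitive actions at all, and that is the actual reason classes (2) and (4) disappear. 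Moreover, even granting your premise, the inequality $\tfrac{|\E|}{3}<n-1<|\E|$ does not eliminate class (2): with $|\E|=2^{2m-1}-\epsilon 2^{m-1}$ non-singular vectors and $n=2^{2m-1}+\epsilon' 2^{m-1}$, the sign choice $\epsilon=\epsilon'=-$ satisfies both bounds, so your ``elementary estimate'' fails. The anticipated hard arithmetic in the unitary case is likewise a phantom; no Zsigmondy argument is needed here (the paper reserves Bang's theorem for the affine case).

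The second genuine gap is in class (5), which you describe as ``conceptually easy.'' The group $\mathrm{PSO}^{+}_{6}(3)\cong\PSL_4(3)$ lies between $\mathrm{P\Omega}^{+}_{6}(3)$ and $\mathrm{PO}^{+}_{6}(3)$ and \emph{does} admit $2$-transitive actions of degree $40$, so it passes every test in your plan. The paper excludes it by a separate argument: among the groups between $\mathrm{P\Omega}^{+}_{6}(3)$ and $\mathrm{PO}^{+}_{6}(3)$, only $\PSL_4(3)$ and $\mathrm{PGL}_4(3)$ are $2$-transitive, the latter does not occur as a subgroup of $\mathrm{PO}^{+}_{6}(3)$, and a $3$-transposition group generated by reflections (which have determinant $-1$) cannot equal $\mathrm{PSO}^{+}_{6}(3)$. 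Without some such argument your proof is incomplete. Finally, note that the small exceptional isomorphisms you flag ($\mathrm{O}^{-}_{4}(2)\cong\Sym(5)$, $\mathrm{O}^{+}_{6}(2)\cong\Sym(8)$, $\mathrm{P\Omega}^{-}_{4}(3)\cong\Alt(6)$) are dispatched in the paper by reduction to Lemma~\ref{l:alsym}, and the two groups $\Omega_8^{+}(2){:}\Sym(3)$ and $\mathrm{P\Omega}_8^{+}(3){:}\Sym(3)$ are excluded because their socles admit no $2$-transitive action; your plan handles these correctly in outline.
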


\begin{proof}
Observe first that, if $G$  is isomorphic to a group in one of the classes (2) -- (6) of Theorem~\ref{t: fischer}, then $G$ is either a Fischer group (and is listed at part (2) above), or is an almost simple Chevalley group of normal or twisted type. Thus we assume that we are in this second case and we derive part (1).

We use the fact that, by assumption, $G$ admits a 2-transitive action. We apply \cite[Main Theorem]{cks}, which lists all almost simple Chevalley groups of normal or twisted type admitting a 2-transitive action. We obtain immediately that either part (1) above holds, or (possibly) one of the following occurs:
\begin{enumerate}
\item[(i)] $G\cong \rm{O}^-_4(2)\cong\Sym(5)$ with $n=5$ or $6$;
  \item[(ii)] $G\cong \rm{O}^+_6(2)\cong\Sym(8)$ with $n=8$ or $15$;
\item[(iii)] $\Alt(6) \cong \rm{P\Omega}^-_4(3) \leq G\leq \rm{PO}^-_4(3)$ with $n=6$ or $10$;
\item[(iv)] $\PSL_4(3) \cong \rm{PSO}^+_6(3) \leq G \leq \rm{PO}^+_6(3)$ with $n=40$.
\end{enumerate}
 In cases (i) and (ii), $G$ is a symmetric group and Lemma~\ref{l:alsym} immediately excludes them. In case (iii),  \cite[p. 46]{aschbacher} implies that $G\cong \Sym(6)\cong\Sp_4(2)$. Now Lemma~\ref{l:alsym} implies that part (1) holds.
 
In case (iv), $G$ has a socle isomorphic to $\PSL_4(3)$ and \cite{cks} implies that $G$ has a 2-transitive action if and only if $G\cong \PSL_4(3)$ or $\rm{PGL}_4(3)$. However, \cite{atlas} implies that $\rm{PO}^+_6(3)$ is a degree 2 extension of $\PSL_4(3)$ that is not isomorphic to $\rm{PGL}_4(3)$, and we conclude that $G$ has a 2-transitive action if and only if $G\cong\rm{PSO}^+_6(3)$. Now, by definition, $G$ is generated by (the projective image of) reflections which, in particular, have determinant equal to $-1$; thus $G$ is not a quotient of $\rm{SO}^+_6(3)$ and so cannot be isomorphic to $\rm{PSO}^+_6(3)$.
\end{proof}

Finally, we exclude the Fischer groups, by appealing to information about the minimal indices of their proper subgroups (we do not use the fact that these groups have no 2-transitive actions).

\begin{Lem}\label{l:alfi}
$G$ is not isomorphic to one of the groups $\Fi_{22}, \Fi_{23}$ or $\Fi_{24}$.
\end{Lem}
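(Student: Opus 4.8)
The plan is to obtain a contradiction by pitting a \emph{lower} bound on $n$, coming from the permutation representation theory of the Fischer groups, against an \emph{upper} bound coming from the design parameters. The three ingredients are: the $2$-transitivity of $G$ on $\Omega$ (Corollary~\ref{c:properties}(b)); the count $|\E|=\tfrac{n(n-1)}{2(\lambda+1)}$ of $3$-transpositions (Lemma~\ref{l:ecount}); and the classical fact that for each of $\Fi_{22},\Fi_{23},\Fi_{24}$ the permutation action on the class of $3$-transpositions is already the smallest faithful transitive one, of degree $|\E|$ equal to $3510$, $31671$, $306936$ respectively.

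First I would record the lower bound. Since $G\leq\Sym(\Omega)$ is $2$-primitive on $\Omega$, it is faithful and transitive, so the point stabiliser $\pi_\infty(\De)$ is a core-free subgroup of $G$ of index $n$; hence $n$ is at least the minimal index of a core-free subgroup of $G$. For the simple groups $\Fi_{22}$, $\Fi_{23}$ and for $\Fi_{24}'$, every proper subgroup is core-free, so this minimal index is read off the maximal-subgroup lists in the ATLAS as $3510$, $31671$, $306936$; for $\Fi_{24}\cong\Fi_{24}'.2$ a short direct check (any core-free $H$ satisfies $H\cap\Fi_{24}'\lneq\Fi_{24}'$, whence $n=|G:H|\geq|\Fi_{24}':H\cap\Fi_{24}'|\geq 306936$) gives the same bound. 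In all three cases this minimal index equals $|\E|$, so $n\geq|\E|$.

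Next I would establish the upper bound $n\leq|\E|$. By Corollary~\ref{c:sizen}, $n=6\lambda-2s$ for some integer $s\geq 0$, so $n$ is even and $n<6(\lambda+1)$; together with the standing hypothesis $n>2\lambda+2=2(\lambda+1)$ this gives $2<\tfrac{n}{\lambda+1}<6$. Rewriting Lemma~\ref{l:ecount} as $2(\lambda+1)=\tfrac{n(n-1)}{|\E|}$ yields $\tfrac{n}{\lambda+1}=\tfrac{2|\E|}{n-1}$, so $2<\tfrac{2|\E|}{n-1}<6$, which rearranges to $\tfrac{|\E|}{3}+1<n\leq|\E|$.

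Combining the two bounds forces $n=|\E|$, and then $2(\lambda+1)=\tfrac{n(n-1)}{|\E|}=n-1$; thus $n-1$ is even, i.e.\ $n$ is odd, contradicting the evenness of $n$ already noted from Corollary~\ref{c:sizen}. This rules out all three Fischer groups. The one step needing care — and the only external input — is the claim that no faithful transitive action of $\Fi_{22},\Fi_{23},\Fi_{24}$ has degree below $|\E|$; this is standard subgroup-index data (from the ATLAS, or from Fischer's original work) and, crucially, is far weaker than the classification of $2$-transitive groups, so the argument remains independent of CFSG. Note that we never use that the Fischer groups have no $2$-transitive action — only that such an action, if it existed, would be forced to the single degree $n=|\E|$, which parity then excludes.
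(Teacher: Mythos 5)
Your proof is correct and follows essentially the same route as the paper: both arguments combine the fact that the minimal (faithful, primitive) permutation degree of each Fischer group equals $|\E|$ with the count $|\E|=\tfrac{n(n-1)}{2(\lambda+1)}$ from Lemma~\ref{l:ecount} and the bound $n>2\lambda+2$ to force $n=2\lambda+3$, contradicting the evenness of $n$ from Corollary~\ref{c:sizen}. The only cosmetic differences are that you phrase the lower bound via core-free subgroups (handling $\Fi_{24}=\Fi_{24}'.2$ explicitly) and carry along the unused inequality $n<6(\lambda+1)$.
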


\begin{proof}
Referring to \cite{atlas, linton_wilson} and \cite[(37.5) and (37.6)]{aschbacher} we see that, for each of the groups $\Fi_{22}, \Fi_{23}$ and $\Fi_{24}$,  the minimum degree of a primitive permutation representation is equal to $|\E|$. Thus Lemma~\ref{l:ecount} implies that
\[
\frac{n(n-1)}{2(\lambda+1)}\leq n,
\]
 and so $n-1\leq 2(\lambda+1)$. On the other hand, by assumption, $2(\lambda+1)< n$ and we conclude that $n=2\lambda+3$. This contradicts the fact that $n$ is even (Corollary~\ref{c:sizen}).
\end{proof}

The following corollary draws together the results of this subsection (it also relies on Proposition~\ref{p: small lambda}).

\begin{Cor}\label{c: no solvable normal}
If Hypotheses~$\ref{hyp4.1}$ hold, $n>2\lambda+2$, and 
$G$ does not contain a non-trivial normal solvable subgroup, then for some $m \geq 2$, $G \cong \Sp_{2m}(2)$, $n=2^{m-1} \cdot (2^m \mp 1)$, and $\pi_\infty(\De) \cong {\rm O}^{\pm}_{2m}(2)$.
\end{Cor}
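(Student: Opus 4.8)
The plan is to collect the three lemmas of this subsection and feed the hypotheses into Fischer's classification. Since Hypotheses~\ref{hyp4.1} hold and $n>2\lambda+2$, Corollary~\ref{c:properties} supplies everything we need about $G$: the pair $(G,\E)$ is a $3$-transposition group, $G$ is $2$-primitive (in particular $2$-transitive) on $\Omega$, $Z(G)=1$, and $\E$ is a single $G$-conjugacy class. Adjoining the standing hypothesis that $G$ has no non-trivial solvable normal subgroup, these are exactly the conditions of Theorem~\ref{t: fischer}, so up to $\Aut(G)$-conjugacy $G$ is one of the groups in classes $(1)$--$(6)$ listed there. I would also record at the outset that, by Lemma~\ref{l:lgrp}, $G$ is transitive on $\Omega$ with $\stab_G(\infty)=\pi_\infty(\De)$; hence pinning down $\pi_\infty(\De)$ amounts to pinning down the point stabilizer of the $2$-transitive action of $G$ on $\Omega$.

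Next I would dispose of classes $(2)$--$(6)$ first. For these, Lemma~\ref{l:allie} applies (its implicit hypothesis, that $G$ admits a $2$-transitive action, is precisely the $2$-primitivity from Corollary~\ref{c:properties}~(b)), and yields that either $G\cong\Sp_{2m}(2)$ for some $m\geq2$ with $n=2^{m-1}(2^m\pm1)$ and point stabilizer $\Oo^\pm_{2m}(2)$ --- which together with the previous paragraph is already the desired conclusion --- or $G$ is one of $\Fi_{22},\Fi_{23},\Fi_{24}$; and the Fischer groups are excluded by Lemma~\ref{l:alfi}. It then remains to handle class $(1)$: if $G\cong\Sym(m)$ with $m\geq5$, then Lemma~\ref{l:alsym} forces $m=6$ and $n=10$.

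It then remains to absorb this last possibility into the $\Sp_{2m}(2)$ family. For this one uses the exceptional isomorphism $\Sym(6)\cong\Sp_4(2)$ together with the numerical coincidence $10=2^{1}(2^{2}+1)$, so that $G$, with its $2$-transitive degree-$10$ action, is literally the $m=2$ member of the family appearing in part~(1) of Lemma~\ref{l:allie}; in particular the point stabilizer is $\Oo^+_4(2)$ (this is exactly how case~(iii) inside the proof of Lemma~\ref{l:allie} is resolved). Assembling the cases, in every case $G\cong\Sp_{2m}(2)$ with $m\geq2$, $n=2^{m-1}(2^m\pm1)$, and $\pi_\infty(\De)=\stab_G(\infty)\cong\Oo^\pm_{2m}(2)$ with signs paired as in Lemma~\ref{l:allie}, which is the assertion of the corollary. (The value $n=2\lambda+2$ that is excluded in the statement is exactly the Boolean case of Proposition~\ref{p: small lambda}, where $G=\L_\infty(\De)$ is elementary abelian and hence is itself a non-trivial solvable normal subgroup of $G$; so that case is already ruled out by the hypothesis on $G$, and nothing is lost.)

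There is no genuinely hard step here: once Lemmas~\ref{l:alsym}, \ref{l:allie}, \ref{l:alfi} and Corollary~\ref{c:properties} are in hand, the argument is pure bookkeeping. The one point that repays care --- and where a careless reader might think an exceptional configuration has been missed --- is the symmetric-group case: one must recognise that the output $(\Sym(6),\,n=10)$ of Lemma~\ref{l:alsym} is not a further exception but literally the smallest member of the $\Sp_{2m}(2)$ list, \emph{and} that under the identification $\Sym(6)\cong\Sp_4(2)$ the hole-stabilizer is the index-$10$ orthogonal subgroup $\Oo^+_4(2)$ rather than, say, the $3{+}3$-partition stabilizer. Everything else is a direct citation of the results already proved in this section.
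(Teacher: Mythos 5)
Your proof is correct and follows essentially the same route as the paper, which offers no separate argument for this corollary beyond the remark that it "draws together the results of this subsection": Corollary~\ref{c:properties} feeds Theorem~\ref{t: fischer}, class (1) is reduced to $\Sym(6)\cong\Sp_4(2)$ with $n=10$ by Lemma~\ref{l:alsym}, classes (2)--(6) are handled by Lemmas~\ref{l:allie} and~\ref{l:alfi}, and $\pi_\infty(\De)=\stab_G(\infty)$ comes from Lemma~\ref{l:lgrp}. Your identification of the degree-$10$ stabilizer as $\Oo^+_4(2)$ (i.e.\ $n=2^{m-1}(2^m+1)$ paired with $\Oo^+_{2m}(2)$) agrees with Lemma~\ref{l:allie} and Example~\ref{ex1}, which is the intended reading of the $\mp/\pm$ in the corollary's statement.
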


\subsection{Affine case}

In this subsection we assume that Hypotheses~$\ref{hyp4.1}$ hold, that $n>2\lambda+2$, and that $G$ contains a non-trivial solvable normal subgroup $N$.  (Here also, these are the hypotheses of Theorem B, plus $n\ne 2\lambda+2$, and an extra restriction on $G$.) 

Again, by Corollary~\ref{c:properties}, $(G,\E)$ is a $3$-transposition group, $\E$ is a $G$-conjugacy class, $Z(G)=1$, and $G$ is $2$-primitive on $\Omega$. Hence by \cite[Theorem  4.3B]{permutation}, $N$ is an elementary abelian $p$-group that acts regularly on $\Omega$.
Moreover Lemma~\ref{l:lgrp} implies that, for $\infty \in\Omega$, $\pi_\infty(\De)$ is the stabilizer of $\infty$ in the action of $G$ on $\Omega$. Hence we write $G_\infty:=\pi_\infty(\De)$ and observe that $G=N\rtimes G_\infty$. Observing that $N$ can be viewed as a vector space over a finite field of order $p$, we note that $G_\infty$ acts linearly on $N$ and so is isomorphic to a subgroup of $\textrm{GL}_d(p)$ where $|N|=p^d$. Furthermore, since $G$ is 2-transitive, $G_\infty$ acts transitively on the set of non-zero vectors in $N$.

\begin{Lem}\label{l:affsizen} 
$p=2$ and $n=2^d$ for some $d > 0$.
\end{Lem}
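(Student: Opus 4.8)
The plan is to combine the structural description of $N$ recorded immediately before the lemma with the parity constraint coming from the two-graph. Recall that in the running situation of this subsection $G$ is $2$-primitive on $\Omega$ and $N$ is a non-trivial solvable normal subgroup, so \cite[Theorem 4.3B]{permutation} already forces $N$ to be an elementary abelian $p$-group acting \emph{regularly} on $\Omega$. Regularity of this action immediately yields $n = |\Omega| = |N| = p^d$, and $d \geq 1$ since $N \neq 1$.

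It then remains only to identify the prime $p$. Since Hypotheses~\ref{hyp4.1} are in force, $\De$ is a supersimple $2-(n,4,\lambda)$ design whose set $\C$ of collinear triples forms a regular two-graph, so Corollary~\ref{c:sizen} applies and tells us that $n$ is even. As $n = p^d$ with $p$ prime, this forces $p = 2$, and hence $n = 2^d$ with $d > 0$, as required.

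There is essentially no obstacle here beyond checking that the cited inputs are genuinely available, which they are: $2$-primitivity, $Z(G)=1$ and the $3$-transposition structure come from Corollary~\ref{c:properties}, while the parity of $n$ comes from Hypotheses~\ref{hyp4.1} via Corollary~\ref{c:sizen}. (If one preferred, the same conclusion can be reached slightly more indirectly by noting that $N \leq O_p(G)$, so $O_p(G) \neq 1$, whence $p \in \{2,3\}$ by Proposition~\ref{p:results}, and then discarding $p=3$ because $3^d$ is odd; but the direct route above is shorter and uses less machinery.)
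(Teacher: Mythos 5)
Your proof is correct and follows essentially the same route as the paper: the regularity of the elementary abelian normal subgroup $N$ gives $n=p^d$, and the evenness of $n$ from Corollary~\ref{c:sizen} forces $p=2$. (The paper also notes an alternative justification via the primitivity of $G_\infty$ on $\Omega\setminus\{\infty\}$, but the parity argument you give is the one it leads with.)
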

\begin{proof} This follows immediately from the fact that $n$ is even (Lemma \ref{l:sizen}), or the fact that $G_\infty$ is primitive on $\Omega\setminus\{\infty\}$.
\end{proof}

\begin{Prop}\label{p:L0} There is a conjugacy class $\E_\infty$ of $G_\infty$ such that ($G_\infty,\E_\infty)$ is a 3-transposition group. Furthermore, $Z(G_\infty)=1$.
\end{Prop}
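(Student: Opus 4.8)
The plan is to realise $(G_\infty,\E_\infty)$ as an isomorphic copy of the quotient $3$-transposition group $(G/N,\,\E N/N)$, and then to deduce $Z(G_\infty)=1$ from the fact that $G_\infty$ acts primitively, but not regularly, on $\Omega\setminus\{\infty\}$. Throughout I would rely on the properties collected in Corollary~\ref{c:properties} (namely: $(G,\E)$ is a $3$-transposition group, $\E$ is a single $G$-conjugacy class, $Z(G)=1$, and $G$ is $2$-primitive on $\Omega$), together with the facts already established in this subsection: $p=2$ by Lemma~\ref{l:affsizen}, the subgroup $N$ is elementary abelian and acts regularly on $\Omega$, and $G=N\rtimes G_\infty$ with $G_\infty=\pi_\infty(\De)=\stab_G(\infty)$.

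First I would define $\E_\infty:=\rho(\E)$, where $\rho\colon G=N\rtimes G_\infty\to G_\infty$ is the projection with kernel $N$ --- equivalently, the quotient map $G\to G/N$ followed by the canonical isomorphism $G/N\cong G_\infty$. Since $N$ is abelian while $G$ is not (as $Z(G)=1$), we have $N\ne G$, so Lemma~\ref{l:fischer} tells us that $(G/N,\,\E N/N)$ is a $3$-transposition group; transporting it along $G/N\cong G_\infty$ shows $(G_\infty,\E_\infty)$ is a $3$-transposition group. Moreover, $\E$ is a single $G$-conjugacy class by Corollary~\ref{c:properties}(d), and the image of one conjugacy class under a surjective homomorphism is again one conjugacy class, so $\E_\infty$ is a single $G_\infty$-conjugacy class, as required. (For confidence one may note that $\E\cap N=\emptyset$: every $[a,b]\in\E$ fixes $n-(2\lambda+2)>0$ points of $\Omega$, while every non-identity element of the regular group $N$ is fixed-point-free; thus $\rho$ maps $\E$ to genuine involutions --- which is anyway forced by the conclusion of Lemma~\ref{l:fischer}.)

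For the centre, I would use that $G_\infty=\pi_\infty(\De)$ is primitive on $\Omega\setminus\{\infty\}$; this is Corollary~\ref{c:properties}(b) (being $2$-primitive, $G$ has primitive point stabiliser), or equivalently Theorem~A(c). Since $G_\infty=\langle\E_\infty\rangle$ is a non-trivial group generated by involutions, $|G_\infty|$ is even; but $n$ is even by Corollary~\ref{c:sizen}, so $|\Omega\setminus\{\infty\}|=n-1$ is odd, and hence $G_\infty$ cannot be regular. A primitive, non-regular permutation group has trivial centraliser in the ambient symmetric group by \cite[Theorem~4.2A]{permutation}, so $Z(G_\infty)\leq C_{\Sym(\Omega\setminus\{\infty\})}(G_\infty)=1$.

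I do not expect a real obstacle: the argument is essentially one application of Lemma~\ref{l:fischer} combined with the package in Corollary~\ref{c:properties}. The only point that requires care is the demand that $\E_\infty$ be a \emph{single} conjugacy class of $G_\infty$ rather than merely a union of such; this is exactly why it is better to push $\E$ through $\rho$ --- so that the image of a single class is visibly a single class --- rather than to take the more obvious candidate $\E\cap G_\infty$, which a priori might split into several $G_\infty$-classes.
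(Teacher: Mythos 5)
Your proposal is correct and follows essentially the same route as the paper: both obtain $(G_\infty,\E_\infty)$ by pushing $\E$ through the quotient $G/N\cong G_\infty$ via Lemma~\ref{l:fischer} and noting that the image of the single $G$-class $\E$ is a single $G_\infty$-class, and both kill the centre via \cite[Theorem~4.2A]{permutation}. The only (immaterial) difference is in the last step: the paper first rules out $G_\infty$ being abelian by showing that otherwise $\langle[a,b]\rangle$ would be a normal, hence transitive, subgroup of even order on the odd number $n-1$ of points, whereas you rule out regularity directly from the same parity clash --- both suffice for the cited theorem.
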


\begin{proof} 




We know that $G_\infty$ is a primitive subgroup of $\Sym(\Omega\backslash \{\infty\})$. Furthermore, since $n>2\lambda+2$ and $G$ is transitive, there exist 
$a, b \in \Omega$ such that $\infty \notin \overline{a,b}$. Suppose first that $G_\infty$ is abelian, so $\langle [a,b] \rangle \unlhd G_\infty$. Then the 
primitivity of $G_\infty$ implies that $\langle [a,b] \rangle$ acts transitively (and therefore regularly) on $\Omega\backslash \{\infty\}$. 
But, since $\langle [a,b] \rangle$ has even order, this contradicts the fact that $n-1$ is odd (Lemma~\ref{l:affsizen}). Thus $G_\infty$ is nonabelian and primitive, and so, 
by \cite[Theorem 4.2A]{permutation}, $Z(G_\infty) \leq C_{\Sym(\Omega\backslash\{\infty\})}(G_\infty)=1$.

Since $N$ is a proper subgroup of $G$, $G/N\cong G_\infty$ is a $3$-transposition group 
with respect to $\mathcal{E}N/N$ by Lemma \ref{l:fischer}. Let $\E_\infty$ be the set of $3$-transpositions for $G_\infty$ obtained as the image of $\mathcal{E}N/N$ under an isomorphism from $G/N$ to $G_\infty$. Then ($G_\infty,\E_\infty)$ is a 3-transposition group. Moreover, since $\E$ is a $G$-conjugacy class (by Corollary~\ref{c:properties}), it follows that $\E\,N/N$ is a $(G/N)$-conjugacy class, and hence $\E_\infty$ is a $G_\infty$-conjugacy class.  
\end{proof}

\begin{Prop}\label{p:L1} 
$G_\infty$ is almost simple, and lies in one of the classes (1)-(6) of Theorem \ref{t: fischer}.
\end{Prop}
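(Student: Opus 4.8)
The plan is to bring $G_\infty$ into the scope of Theorem~\ref{t: fischer}; since that theorem already produces ``almost simple, in one of the classes $(1)$--$(6)$'' as part of its conclusion, \emph{all} that remains is to verify that $G_\infty$ contains no non-trivial solvable normal subgroup. The $3$-transposition hypotheses are already in hand: by Proposition~\ref{p:L0}, $(G_\infty,\E_\infty)$ is a finite $3$-transposition group with $\E_\infty$ a $G_\infty$-conjugacy class and $Z(G_\infty)=1$, so Proposition~\ref{p:results} applies to it (and Theorem~\ref{t: fischer} will, once the solvable-radical condition is checked); and by Theorem~A(c) (Proposition~\ref{p:2prim}), $G_\infty=\pi_\infty(\De)$ is primitive on $\Omega\setminus\{\infty\}$.

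So suppose, for a contradiction, that $G_\infty$ has a non-trivial solvable normal subgroup, and let $M$ be a minimal normal subgroup of $G_\infty$ contained in it; then $M$ is elementary abelian, say a $p$-group. Since $G_\infty$ is primitive on $\Omega\setminus\{\infty\}$ and $M\ne 1$ is normal, $M$ is transitive there, and being abelian it acts regularly, so $|M|=n-1$. By Lemma~\ref{l:affsizen}, $n=2^d$, so $|M|=2^d-1$ is odd and hence $p$ is odd. As $M\le O_p(G_\infty)$ we have $O_p(G_\infty)\ne 1$, so Proposition~\ref{p:results} (which gives $O_p(G_\infty)=1$ for every prime $p>3$) forces $p=3$. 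Writing $|M|=3^k$ with $k\ge 1$ we obtain $3^k+1=2^d$, and Lemma~\ref{l:2a3b} then forces $k=1$, $d=2$, i.e.\ $n=4$. But $G$ is $2$-transitive on $\Omega$ by Corollary~\ref{c:properties}(b), hence non-trivial, so $\lambda\ge 1$ and $n=4=2\lambda+2$ violates the standing assumption $n>2\lambda+2$. This contradiction shows $G_\infty$ has no non-trivial solvable normal subgroup, and Theorem~\ref{t: fischer} completes the proof.

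There is no serious obstacle once this route is chosen; the one point to get right is the reduction step --- using primitivity of $G_\infty$ to force an abelian normal subgroup to be \emph{regular}, which converts the question into the number-theoretic impossibility handled by Lemma~\ref{l:2a3b} --- together with care over the boundary case $n=4$, which is exactly the configuration excluded by $n>2\lambda+2$ (and which is genuinely realised, by $\Sym(4)$ acting on $4$ points). Note that the argument also recovers $O_2(G_\infty)=1$ for free, since $n-1$ is odd. One could alternatively split into the cases $O_2(G_\infty)\ne1$ and $O_3(G_\infty)\ne1$ of Proposition~\ref{p:results} and treat each directly, but routing through the primitivity of $G_\infty$ is shorter and avoids a separate analysis of normal $2$- and $3$-subgroups and their fixed spaces on $N$.
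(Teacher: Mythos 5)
Your proof is correct. Its core mechanism is exactly the paper's: primitivity of $G_\infty$ on $\Omega\setminus\{\infty\}$ forces a putative abelian normal subgroup to be transitive, hence regular of order $n-1=2^d-1$ (odd, by Lemma~\ref{l:affsizen}), Proposition~\ref{p:results} then pins the prime down to $3$, and Lemma~\ref{l:2a3b} turns $3^k+1=2^d$ into the excluded boundary case $n=4$. Where you genuinely diverge is the endgame. The paper, having ruled out conclusions (i) and (ii) of Proposition~\ref{p:results}, passes to conclusion (iii): it notes that the nonabelian simple normal subgroup $Q(G_\infty)$ has even order and so cannot act regularly on the $n-1$ (odd) points, and then invokes \cite[Theorem 4.3B]{permutation} to deduce that the primitive group $G_\infty$ is almost simple, before finally applying Theorem~\ref{t: fischer} for the list of classes. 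You instead observe that eliminating a non-trivial solvable normal subgroup is already the full hypothesis of Theorem~\ref{t: fischer}, whose conclusion (as established in the paper's proof of that theorem via Cuypers' result) delivers almost simplicity together with membership in classes (1)--(6) in one stroke. Both routes are valid; yours is leaner in that it dispenses with the $Q(G_\infty)$-regularity argument and the appeal to the structure theory of primitive groups, at the modest cost of relying on almost simplicity being packaged inside the conclusion of Theorem~\ref{t: fischer} rather than exhibited directly from the permutation action. One cosmetic remark: your final contradiction is cleaner phrased as $n=4\leq 2\lambda+2$ (since $\lambda\geq1$) against the standing assumption $n>2\lambda+2$, rather than asserting $n=2\lambda+2$ outright.
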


\begin{proof} By Proposition \ref{p:L0}, $G_\infty$ satisfies the hypotheses of Proposition \ref{p:results} and one of the conclusions (i), (ii) or (iii) holds. Suppose that (i) or (ii) holds. Since $G_\infty$ is primitive on $\Omega\backslash \{\infty\}$, any nontrivial normal subgroup acts transitively, and so $|\Omega\backslash \{\infty\}|=n-1 \in \{2^b,3^b\}$ for some $b > 0$. However $n=2^d$ 
(Lemma \ref{l:affsizen}), and hence $2^d-1=3^b$. This implies that $(d,b)=(2,1)$ by Lemma \ref{l:2a3b}, contradicting the fact that $n>2\lambda+2 \geq 4$. Thus Proposition \ref{p:results}~(iii) holds so $Q(G_\infty)=\langle gh \mid g,h \in \E, |gh|=2 \rangle$ is a nonabelian simple normal subgroup of $G_\infty$. Moreover since $|Q(G_\infty)|$ is even and $n-1$ is odd (by Lemma \ref{l:affsizen}), $Q(G_\infty)$ is not regular on $\Omega \backslash \{\infty\}$. Thus by \cite[Theorem 4.3B]{permutation}, $G_\infty$ is almost simple, and finally, by Theorem \ref{t: fischer}, $G_\infty$ lies in one of the classes (1)-(6) of that result.
\end{proof}

To complete the proof of Theorem B, it remains to consider the possibilities for $G_\infty$. We use the fact that $G_\infty$ acts transitively on $N^*$, the set of non-zero vectors of a $d$-dimensional vector space over $\mathbb{F}_2$. All transitive linear groups were classified by Hering -- with the key results appearing in \cite{hering}, and a complete list given in Liebeck \cite[Appendix]{liebeck}. We avoid using the classification from \cite{liebeck} since it depends on the finite simple group classification. Instead we use the fact that $G_\infty$ is in one of the six classes of Theorem~\ref{t: fischer}, together with some of the results in Hering's paper \cite{hering2} (which do not depend on the simple group classification). First we consider the groups from cases (1) and (3) of Theorem~\ref{t: fischer} to identify the examples in Theorem B~(c).

\begin{Lem}\label{l: ortho sym}
If $G_\infty \cong \Sym(m)$ for some $m \geq 5$ then $m=6$ and $d=4$, and Theorem B~(c) holds with $m=2$.
\end{Lem}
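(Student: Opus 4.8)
The plan is to recognise $G_\infty$ as a transitive linear group over $\mathbb{F}_2$ and to apply Hering's classification-free results. Following the set-up preceding Proposition~\ref{p:L1}, I would identify $\Omega$ with the regular normal subgroup $N\cong(\mathbb{F}_2)^d$, so that $\infty$ is the zero vector, $G=N\rtimes G_\infty$, and $G_\infty\cong\Sym(m)$ acts faithfully and $\mathbb{F}_2$-linearly on $N$; by $2$-transitivity of $G$ (Corollary~\ref{c:properties}(b)) it acts transitively on the $n-1=2^d-1$ nonzero vectors. Thus $G_\infty$ is a non-solvable transitive linear group over $\mathbb{F}_2$, so by \cite{hering2} it normalises a subgroup lying in one of the families $\mathrm{SL}_a(q)$, $\mathrm{Sp}_a(q)$, $G_2(q)'$, or it is one of a short list of small exceptions. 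Running through these, using that $\Alt(m)$ is the only non-trivial proper normal subgroup of $\Sym(m)$ together with Lemma~\ref{l:2a3b} for the arithmetic, one finds that in every case $d=4$: the relevant normal subgroup is forced to be $\mathrm{SL}_2(4)\cong\Alt(5)$ or $\mathrm{Sp}_4(2)\cong\Sym(6)$, each acting on $(\mathbb{F}_2)^4$. Then $|\Sym(m)|$ divides $|\mathrm{GL}_4(2)|=20160$, which leaves $m\leq7$, and $m=7$ is excluded since $\Sym(7)$ is not isomorphic to a subgroup of $\mathrm{GL}_4(2)\cong\Alt(8)$. Hence $n=16$ and $m\in\{5,6\}$.

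It then remains to exclude $m=5$, which I expect to be the main obstacle, since — unlike the other symmetric groups — $\Sym(5)\cong\Sigma\mathrm{L}_2(4)$ genuinely does admit a transitive linear action over $\mathbb{F}_2$ (on $\mathbb{F}_4^2$), so Hering's theorem alone does not discard it. Here I would write each elementary move through $\infty$ as $[\infty,v]=v\,g_v$ with $g_v\in\E_\infty$ the corresponding $3$-transposition (\ie a transposition of $\Sym(m)$): from $[\infty,v]^2=1$ and $n>2\lambda+2$ one checks that $g_v$ is a non-trivial involution fixing $v$ with $v\in\Im(g_v+1)$, whence $|\supp([\infty,v])|=n-|\Fix(g_v)|=2\lambda+2$. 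In the $\Sym(5)$-module $\mathbb{F}_4^2$ the transpositions act as $\mathbb{F}_2$-semilinear involutions, whose fixed space has $\mathbb{F}_2$-dimension $2$, giving $2\lambda+2=16-4=12$, so $\lambda=5$ (whereas for $\Sym(6)\cong\Sp_4(2)$ the transpositions are transvections with $3$-dimensional fixed space, giving $\lambda=3$, consistent with $\De^a$ at $m=2$). One then rules out $\lambda=5$, $n=16$, $G\cong 2^4.\Sym(5)$: this can be attacked through Lemma~\ref{l:ecount} and Corollary~\ref{c:sizen} together with the constraints imposed by property $(\triangle)$ and the regular two-graph hypothesis, or more structurally by noting that a $3$-transposition group with non-trivial $2$-core, trivial centre and quotient $\Sym(5)\cong{\rm O}_4^-(2)$ must act on the natural orthogonal module, on which $\Sym(5)$ is \emph{intransitive}, contradicting $2$-transitivity of $G$.

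Thus $m=6$ and $d=4$. Using the exceptional isomorphism $\Sym(6)\cong\Sp_4(2)$, one obtains $G=\L_\infty(\De)=N\rtimes G_\infty\cong 2^4.\Sp_4(2)$, and by Lemma~\ref{l:lgrp} the hole-stabiliser $\pi_\infty(\De)=G_\infty\cong\Sp_4(2)$; since $n=16=2^{2\cdot2}$, this is precisely case (c) of Theorem~B with parameter $m=2$, realised by $\De^a$ (Example~\ref{ex2}, Lemma~\ref{l: symp properties}). The delicate point throughout is the elimination of $\Sym(5)$: one must feed in the extra design-theoretic information — the support size of an elementary move, hence $\lambda$, and the restrictions coming from $(\triangle)$ and the regular two-graph — since the purely linear data do not suffice.
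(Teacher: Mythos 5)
Your overall route coincides with the paper's: both reduce to Hering's classification-free results on transitive linear groups to force $d=4$ (the paper simply cites \cite[Theorem 5.12]{hering} for this and then asserts that $m=6$ follows ``by direct calculation''), and you correctly --- indeed more explicitly than the paper --- isolate the real issue, namely that $\Sym(5)\cong\Sigma\mathrm{L}_2(4)$ genuinely is a transitive linear group on $(\mathbb{F}_2)^4$. The problem is that your exclusion of $m=5$ does not go through as written. Your ``structural'' argument assumes that the $\Sym(5)$-module $N$ must be the natural ${\rm O}_4^-(2)$-module; but $\Sym(5)$ has a second faithful $4$-dimensional $\mathbb{F}_2$-module, namely $\mathbb{F}_4^2$ with $\Sym(5)$ acting as $\Sigma\mathrm{L}_2(4)$ (irreducible but not absolutely irreducible, with endomorphism ring $\mathbb{F}_4$, hence not the orthogonal module), and on this module $\Sym(5)$ \emph{is} transitive --- which is exactly why the case survives Hering in the first place. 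Nothing in your argument forces the module to be orthogonal, so this route is circular. Your ``numerical'' route also fails to produce a contradiction: with $n=16$ and $\lambda=5$ (which your fixed-space computation correctly yields), Lemma~\ref{l:ecount} gives $|\E|=120/6=20$ and Corollary~\ref{c:sizen} gives $s=3\lambda-8=7$; both are perfectly consistent, and you do not extract anything further from $(\triangle)$ or the regular two-graph condition.

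The missing step is available and cheap: by Corollary~\ref{c:properties}(b), $G$ is $2$-primitive, so $G_\infty$ must act \emph{primitively} on the $15$ nonzero vectors of $N$. In the $\Sigma\mathrm{L}_2(4)$-action the five one-dimensional $\mathbb{F}_4$-subspaces partition $N\setminus\{0\}$ into five blocks of size three, and this partition is preserved by the semilinear group (equivalently, the point stabilizer has order $8$, a Sylow $2$-subgroup of $\Sym(5)$, which lies inside a copy of $\Sym(4)$ and so is not maximal). Hence $\Sym(5)$ is transitive but imprimitive on $N^*$, contradicting $2$-primitivity; only $m=6$ survives, with $\Sym(6)\cong\Sp_4(2)$ acting on its natural module (point stabilizer of order $48$, maximal), and your final paragraph then correctly lands in Theorem~B(c) with parameter $m=2$. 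With this repair your argument is complete and in fact supplies the ``direct calculation'' that the paper leaves implicit.
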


\begin{proof}
We apply \cite[Theorem 5.12]{hering} which (amongst other things) asserts that if a finite symmetric group acts transitively on the set of non-zero vectors of a $d$-dimensional vector space over $\mathbb{F}_2$, then $d=4$. Now the fact that $m=6$ follows by direct calculation, and since $\Sym(6) \cong \Sp_{4}(2)$, Theorem B~(c) holds.
\end{proof}

\begin{Lem}\label{l:sp2maff}
If $G_\infty \cong \Sp_{2m}(2)$ for some $m \geq 2$ then $d=2m$, and Theorem B~(c) holds.
\end{Lem}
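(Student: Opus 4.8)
The plan is to show that $d = 2m$; the rest of the lemma then follows at once, since $n = |N| = 2^{2m}$ and $\L_\infty(\De) = G = N \rtimes G_\infty$ is a split extension of $G_\infty \cong \Sp_{2m}(2)$ by the elementary abelian group $N$ of order $2^{2m}$, so $\L_\infty(\De) \cong 2^{2m}.\Sp_{2m}(2)$, which is outcome~(c) of Theorem~B. Recall the standing data from the start of this subsection: $N$ is a faithful $d$-dimensional $\mathbb{F}_2$-module for $G_\infty$, and (because $G$ is $2$-transitive) $G_\infty$ acts transitively on the $2^d - 1$ nonzero vectors of $N$; note in particular that $d \geq 2m \geq 4$ will follow from the lower bound below, so that Zsigmondy's theorem applies in a nontrivial range.

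I would obtain the lower bound $d \geq 2m$ from the classical fact that the smallest faithful $\mathbb{F}_2\Sp_{2m}(2)$-module has dimension $2m$, realised by the natural module. For $m = 2$ this is in any case already contained in Lemma~\ref{l: ortho sym}, via the isomorphism $\Sp_4(2) \cong \Sym(6)$.

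For the upper bound $d \leq 2m$, transitivity on $N \setminus \{0\}$ forces $2^d - 1$ to divide $|G_\infty| = |\Sp_{2m}(2)| = 2^{m^2}\prod_{i=1}^{m}(2^{2i}-1)$. If $d \neq 6$, then by Lemma~\ref{l:bang} the number $2^d - 1$ has a primitive prime divisor $s$; as $s$ is odd it does not divide $2^{m^2}$, so it divides $2^{2i}-1$ for some $i$ with $1 \leq i \leq m$, and primitivity means that the multiplicative order of $2$ modulo $s$ is exactly $d$, whence $d \mid 2i \leq 2m$. In the remaining case $d = 6$ the number $63 = 3^2 \cdot 7$ must divide $|\Sp_{2m}(2)|$; since $7 \mid 2^{2i}-1$ only when $3 \mid i$, this forces $m \geq 3$, and together with $d \geq 2m$ we obtain $m = 3 = d/2$. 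Either way $d \leq 2m$, hence $d = 2m$ and Theorem~B~(c) holds.

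The one genuinely non-elementary ingredient is the lower bound $d \geq 2m$, i.e.\ that $\Sp_{2m}(2)$ admits no faithful $\mathbb{F}_2$-module of dimension smaller than $2m$; I expect this to be the main obstacle to a fully self-contained write-up, although it is classical and independent of CFSG. A cleaner alternative, matching the strategy used elsewhere in this section, is to feed the hypotheses directly into Hering's classification \cite{hering2} of finite transitive linear groups: this forces $N$ to be the natural symplectic module, so $d = 2m$, and has the additional benefit of making the consistency with Example~\ref{ex2} transparent.
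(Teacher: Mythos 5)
Your proof is correct, and your upper bound $d\leq 2m$ is essentially the argument the paper gives: both use transitivity on $N^*$ to get $(2^d-1)\mid|\Sp_{2m}(2)|$ and then a primitive prime divisor of $2^d-1$ (with an ad hoc fix at $d=6$; the paper simply takes $r=7$ and notes $7\nmid|\Sp_4(2)|$, while you derive $m\geq 3$ and close the case using the lower bound). Where you genuinely diverge is the lower bound $d\geq 2m$. The paper stays with Zsigmondy: since $G_\infty$ embeds in $\GL_d(2)$, a primitive prime divisor of $2^{2m}-1$ (which exists for $m\neq 3$) divides $|\GL_d(2)|=2^{\binom{d}{2}}\prod_{i=1}^d(2^i-1)$ and hence forces $2m\leq d$, with a direct order computation disposing of $m=3$. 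You instead cite the classical fact that the minimal faithful $\mathbb{F}_2$-module of $\Sp_{2m}(2)$ has dimension $2m$; this is true, CFSG-free, and is exactly the device the paper itself uses one lemma later for ${\rm O}_{2m}^\pm(2)$ via \cite[Proposition 5.4.13]{kleidmanliebeck}, so it is stylistically consistent, and it buys you uniformity (no $m=3$ exception) at the cost of a heavier external citation where the paper's divisibility argument is self-contained. Your suggested ``cleaner alternative'' of invoking Hering's classification of transitive linear groups should be treated with care: the paper explicitly avoids the complete list (Liebeck's appendix) because it depends on the classification of finite simple groups, and only uses those parts of Hering's work that do not; wholesale appeal to that classification would undermine the paper's stated CFSG-independence.
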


\begin{proof}
Since $G_\infty$ is transitive on $N^*$, we must have $(2^d-1) \big| |G_\infty|$. If $d \neq 6$, let $r$ be a primitive prime divisor of $2^d-1$ (as in Lemma~\ref{l:bang}) and if $d=6$ let $r:=7$. In particular, note that $r$ divides 
$$
|G_\infty|=2^{m^2} \cdot \prod_{i=1}^m (2^{2i}-1).
$$
If $d > 2m$ then  $r \nmid 2^{2i}-1$ for  each $1 \leq i \leq m$, contradicting the fact that $r$ divides $|G_\infty|$. Hence $d \leq 2m$. 

We prove the reverse inequality. First suppose that $m \neq 3$, 
and let $r$ be a primitive prime divisor of $2^{2m}-1$, which 
exists by Lemma~\ref{l:bang}. Then $r$ divides $|G_\infty|$. Since $|G_\infty|$ divides
$$
|\GL_d(2)|=2^{\binom{d}{2}} \cdot \prod_{i=1}^d (2^i-1)
$$ 
an analogous argument shows that $2m \leq d$ and we conclude that 
$d=2m$, as required. 
If $m=3$, we simply check that $|\Sp_6(2)|$ does not divide the order of $|\GL_d(2)|$ for $d<6$ and so $d=6=2m$. Thus Theorem B~(c) holds for $m\geq3$. If $m=2$ then $G\cong\Sym(6)$ and Theorem B(c) holds by Lemma~\ref{l: ortho sym}.
\end{proof}

Next we eliminate the groups in case (2) and a related group from case (6).

\begin{Lem}\label{l: ortho affine}
The group $G_\infty$ is not isomorphic to  ${\rm O}_{2m}^\pm(2)$ for any $m \geq 2$, and $G_\infty \ne \Omega_8^+(2):\Sym(3)$.
\end{Lem}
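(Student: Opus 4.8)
The plan is to rule out every $\mathbb{F}_2 G_\infty$-module on which $G_\infty$ could act transitively on nonzero vectors, by trapping the dimension of such a module between two Zsigmondy bounds and then using the orbit structure of the natural orthogonal module. Recall from the affine-case setup that $n=2^d$, that $N\cong\mathbb{F}_2^d$ is a regular elementary abelian normal subgroup with $G=N\rtimes G_\infty$, and that $G_\infty$ acts faithfully and linearly on $N$, transitively on its $2^d-1$ nonzero vectors; in particular $G_\infty\hookrightarrow\GL_d(2)$. A transitive linear group acts irreducibly (a proper nonzero invariant subspace contains the orbit of any of its nonzero vectors), so $N$ is a faithful irreducible $\mathbb{F}_2 G_\infty$-module. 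Exactly as in the proof of Lemma~\ref{l:sp2maff}, combining $2^d-1\mid|G_\infty|$ and $|G_\infty|\mid|\GL_d(2)|$ with Lemma~\ref{l:bang} — using that every odd prime divisor of $|{\rm O}_{2m}^\epsilon(2)|$ divides $2^j-1$ for some $j\leq2m$, and that for $\epsilon=-$ a primitive prime divisor of $2^{2m}-1$ divides $2^m+1$, hence $|{\rm O}_{2m}^-(2)|$ — gives $d\leq2m$, with $d=2m$ when $\epsilon=-$ and $d\in\{2m-2,2m-1,2m\}$ when $\epsilon=+$.

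Suppose first that $m\geq5$, so that the half-spin modules, of dimension $2^{m-1}>2m$, are too large to interfere. By the results of Hering~\cite{hering2}, which are independent of the finite simple group classification, the only faithful irreducible $\mathbb{F}_2$-module for ${\rm O}_{2m}^\epsilon(2)$ of dimension at most $2m$ is the natural orthogonal module, of dimension exactly $2m$. Hence $d=2m$ and $N$ is that module; but on it ${\rm O}_{2m}^\epsilon(2)$ has two nonempty orbits of nonzero vectors, namely the non-singular vectors (numbering $2^{m-1}(2^m-1)$ or $2^{m-1}(2^m+1)$ according as $\epsilon$ is $+$ or $-$) and the nonzero singular vectors. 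So $G_\infty$ is not transitive on $N^{*}$, a contradiction.

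It remains to deal with the finitely many small cases and with $\Omega_8^+(2):\Sym(3)$. If $(m,\epsilon)=(2,+)$ then ${\rm O}_4^+(2)$ is not almost simple, contradicting Proposition~\ref{p:L1}; if $(m,\epsilon)\in\{(2,-),(3,+)\}$ then ${\rm O}_4^-(2)\cong\Sym(5)$ and ${\rm O}_6^+(2)\cong\Sym(8)$, which are excluded by Lemma~\ref{l: ortho sym}; and for $(m,\epsilon)\in\{(3,-),(4,+),(4,-)\}$ one verifies directly — using Lemma~\ref{l:bang} to restrict the admissible dimensions $d$ with $2^d-1\mid|G_\infty|$, an order comparison to discard all such $d$ except possibly $d=2m$, and, where $d=2m$ survives, the short list of small faithful irreducible $\mathbb{F}_2$-modules together with the orbit count above — that no transitive action on nonzero vectors can occur. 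Finally, if $G_\infty\cong\Omega_8^+(2):\Sym(3)$, then $|G_\infty|=2^{13}\cdot3^6\cdot5^2\cdot7$; since the multiplicative orders of $2$ modulo $3$, $5$ and $7$ are $2$, $4$ and $3$, Lemma~\ref{l:bang} and $2^d-1\mid|G_\infty|$ force $d\leq6$, whereas the $3$-part of $|\GL_d(2)|$ is at most $3^4$ for $d\leq6$ while $3^6\mid|G_\infty|$ — a contradiction.

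The step I expect to be the main obstacle is the representation-theoretic input of the second paragraph: establishing — without recourse to the finite simple group classification — that ${\rm O}_{2m}^\epsilon(2)$ has no faithful irreducible $\mathbb{F}_2$-module of dimension at most $2m$ beyond the natural one. This is precisely the point at which Hering's paper~\cite{hering2} is indispensable, and some care is needed both in quoting the exact statement and in marshalling the low-dimensional exceptional isomorphisms so that the general argument goes through without gaps.
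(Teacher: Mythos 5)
Your argument for the main family is, at its core, the paper's: trap $d\leq 2m$ via $2^d-1\mid |G_\infty|$ and Zsigmondy, force $d=2m$ with $N$ the natural module via a minimal-dimension result, and contradict transitivity using the $G_\infty$-invariant partition of the nonzero vectors into singular and non-singular ones. Two points of comparison. First, the representation-theoretic input you flag as the crux is not what Hering's papers provide (they concern transitive linear groups); the paper takes it from \cite[Proposition 5.4.13]{kleidmanliebeck} (ultimately the Landazuri--Seitz bounds), which holds uniformly for all $m\geq 2$, so your separate treatment of $m\leq 4$ and the worry about half-spin modules evaporate once the correct source is quoted --- your ad hoc disposals (${\rm O}_4^+(2)$ not almost simple, ${\rm O}_4^-(2)\cong\Sym(5)$, ${\rm O}_6^+(2)\cong\Sym(8)$ via Lemma~\ref{l: ortho sym}) are correct but unnecessary, and your parenthetical refinements ``$d=2m$ when $\epsilon=-$'', ``$d\in\{2m-2,2m-1,2m\}$ when $\epsilon=+$'' do not actually follow from the upper-bound argument alone (harmless, since you never use them). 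Second, your treatment of $\Omega_8^+(2)\,{:}\,\Sym(3)$ genuinely differs from the paper's and is valid: you get $d\leq 6$ from the odd prime divisors $3,5,7$ of $|G_\infty|$ and then note that $3^6$ divides $|G_\infty|$ while the $3$-part of $|\GL_6(2)|$ is only $3^4$; the paper instead forces $d=8$ from the minimal module dimension and observes that the $\Omega_8^+(2)$-orbit of $119$ nonzero singular vectors cannot be one of equal-length orbits partitioning $255$ vectors. Both are sound; yours is arguably more elementary at that step.
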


\begin{proof}
Suppose that $G_\infty \cong  {\rm O}_{2m}^\pm(2)$ for some $m \geq 2$ and note that 
\[
|{\rm O}_{2m}^{\pm}(2)|=2 \cdot 2^{m(m-1)}(2^m\mp1)\prod_{i=1}^{m-1} (2^{2i}-1). 
\]
Exactly as for Lemma~\ref{l:sp2maff}, we use the fact that $2^d-1$ divides $|G_\infty|$ to conclude that $d\leq 2m$. Now \cite[Proposition 5.4.13]{kleidmanliebeck} asserts that the smallest module for $\rm{O}_{2m}^\pm(2)$ over a field of characteristic $2$ has dimension $2m$ and, moreover, it is the natural module. 
Since the sets of isotropic vectors and non-isotropic vectors are each invariant under the action of $\rm{O}_{2m}^\pm(2)$ on its natural module, and since each of these sets is non-empty, we conclude that $\rm{O}_{2m}^\pm(2)$ is not transitive on the set of non-zero vectors,
which is a contradiction. 

Now suppose that $G_\infty=\Omega_8^\pm(2)\,:\,\Sym(3)$. The arguments of the previous paragraph imply that $d=8$, so there are $|\Omega\setminus\{\infty\}|=2^d-1=255$ non-zero vectors. Also $G_\infty$ is transitive on $\Omega \backslash \{\infty\}$, and so its normal subgroup $\Omega^\pm_8(2)$ must have equal length orbits permuted transitively by $\Sym(3)$. However one of the $\Omega^\pm_8(2)$-orbits is the set of non-zero
singular vectors, of size 119, so this is not possible. 
\end{proof}

Now we consider case (4) of Theorem~\ref{t: fischer}. 

\begin{Lem}
$G_\infty$ is not isomorphic to $\rm{PSU}_m(2)$ for any $m \geq 4$.
\end{Lem}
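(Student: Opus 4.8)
The plan is to run the same Zsigmondy/subgroup-index argument that was used to eliminate the orthogonal groups in Lemma~\ref{l: ortho affine}, now for $G_\infty\cong\PSU_m(2)$ with $m\geq 4$. Since $G_\infty$ is transitive on the set $N^*$ of $2^d-1$ non-zero vectors of a $d$-dimensional $\mathbb{F}_2$-space, we have $(2^d-1)\mid |G_\infty|$, where
\[
|\PSU_m(2)|=\frac{1}{\gcd(3,2^m+1)}\cdot 2^{m(m-1)/2}\prod_{i=2}^{m}\bigl(2^i-(-1)^i\bigr).
\]
Picking a primitive prime divisor $r$ of $2^d-1$ (using Lemma~\ref{l:bang}, with the usual ad hoc substitute $r:=7$ in the exceptional case $d=6$), the fact that $r$ divides $|G_\infty|$ forces an upper bound on $d$: the prime-power factors $2^i\pm 1$ appearing in $|\PSU_m(2)|$ only involve $2^j-1$ for $j\leq 2m$ (since $2^i+1\mid 2^{2i}-1$), so $d\leq 2m$. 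So the first step is to establish this bound exactly as in Lemma~\ref{l:sp2maff}.

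The second step is to pin down $d$ from below using the minimal dimension of a faithful $\mathbb{F}_2$-representation of $\PSU_m(2)$. By \cite[Proposition 5.4.13]{kleidmanliebeck} (the same reference already used for the orthogonal groups), the smallest-dimensional module for $\PSU_m(2)$ in characteristic $2$ is the natural module of dimension $2m$ over $\mathbb{F}_4$, which has $\mathbb{F}_2$-dimension $2m$ — or, more to the point, any nontrivial $\mathbb{F}_2 G_\infty$-module has dimension at least $2m$. Combined with $d\leq 2m$ this gives $d=2m$, and $N$ must be (the $\mathbb{F}_2$-form of) the natural unitary module. The final step is then to derive a contradiction from transitivity on $N^*$: on the natural unitary module the Hermitian form $h$ partitions the non-zero vectors into isotropic vectors ($h(v,v)=0$) and non-isotropic vectors ($h(v,v)\neq 0$), both sets are non-empty, and both are invariant under $\PSU_m(2)$ (the scalars act trivially on these classes since $|\mathbb{F}_4^\times|=3$ is coprime to $2$, so passing to the projective group is harmless here). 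Hence $G_\infty$ cannot be transitive on $N^*$, the required contradiction.

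I expect the main obstacle to be a clean, CFSG-free justification of the minimal-dimension claim — i.e.\ that every nontrivial $\mathbb{F}_2 \PSU_m(2)$-module has dimension at least $2m$ — since one must be careful that \cite[Proposition 5.4.13]{kleidmanliebeck} (or a comparably elementary source) really gives this without invoking the simple group classification, and one must also handle the scalar subgroup carefully when passing between $\mathrm{SU}_m(2)$ and $\PSU_m(2)$. A secondary nuisance is the small cases $m=4,5$ and $d=6$: for $d=6$ one uses $r=7$ as in Lemma~\ref{l:sp2maff}, and one should double-check by direct order computation that $|\PSU_m(2)|\nmid |\GL_d(2)|$ for $d<2m$ when $m$ is small, exactly mirroring the $m=3$ check in that lemma. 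Once the dimension is forced to be $2m$, the isotropic/non-isotropic dichotomy closes the argument immediately, just as in the orthogonal case.
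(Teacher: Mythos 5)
There is a genuine gap: your lower bound ``$d\geq 2m$'' is false, and with it the identification of $N$ as the natural unitary module on which your final isotropic/non-isotropic argument rests. Concretely, for $m=4$ the exceptional isomorphism $\PSU_4(2)\cong\Omega_6^-(2)$ gives a faithful $6$-dimensional $\mathbb{F}_2$-module, while $2m=8$; so no reference can supply the claim that every nontrivial $\mathbb{F}_2\PSU_m(2)$-module has dimension at least $2m$. The failure is systematic for even $m$: since $2^m-(-1)^m=2^m-1$ when $m$ is even, the order $|\PSU_m(2)|$ contains no factor divisible by a primitive prime divisor of $2^{2m}-1$, so the very Zsigmondy argument you use for the upper bound actually forces $d\leq 2m-2$ (not merely $d\leq 2m$), which is incompatible with $d=2m$. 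Thus for even $m$ your two bounds cannot both be right, and the one that fails is the lower bound. (Your isotropy endgame is also never needed: in the cases that survive, divisibility alone already fails.)

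The paper's proof instead splits on the parity of $m$. For $m$ odd, the factor $2^m+1$ carries a primitive prime divisor of $2^{2m}-1$ and one gets $d=2m$ exactly as in the symplectic case; for $m$ even the same method pins $d=2m-2$ (with a direct order check for $m=4$, where $d=6$). It then invokes \cite[Proposition~5.4.11]{kleidmanliebeck} to see that a faithful characteristic-$2$ representation of $\PSU_m(2)$ of dimension $2m-2$ (for $m$ even) or $2m$ (for $m$ odd) exists only for $m=4$ and $m=5$ respectively, and finishes by checking that $2^6-1$ does not divide $|\PSU_4(2)|$ (the prime $7$ is missing) and $2^{10}-1$ does not divide $|\PSU_5(2)|$ (the prime $31$ is missing). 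If you want to salvage your plan, you must replace the uniform bound $d\geq 2m$ by this parity-sensitive determination of $d$ and then close the small cases by order divisibility rather than by geometry of the Hermitian form.
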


\begin{proof}
Suppose that $G_\infty \cong \rm{PSU}_m(2)$ for some $m \geq 4$, and note that 
$$
|G_\infty|=\frac{2^{\binom{m}{2}}}{\delta} \cdot \prod_{i=2}^m (2^i-(-1)^i)
$$
where $\delta=\gcd(m,3)$. First we determine the value of $d$ for $m$ odd, and even, separately.

\medskip
{\it Claim: If $m$ is odd then $d=2m$.}\quad Suppose that $m$ is odd and 
let $r$ be a primitive prime divisor of $2^{2m}-1$ (which exists, by Lemma~\ref{l:bang} since $m\geq4$). Observe that, since $2^{2m}-1=(2^m-1)(2^m+1)$, $r$ divides $2^m+1$ and hence $r$ divides $|G_\infty|$. Then, as in the proof of Lemma \ref{l:sp2maff}, the fact that $|G_\infty|$ divides $|{\rm GL}_d(2)|$ implies that $2m \leq d$.
Conversely, if $d > 2m$ and $r$ is now a primitive prime divisor of $2^d-1$, then, as in the proof of Lemma \ref{l:sp2maff},  we have $r \nmid 2^{2i}-1$ for each $i \leq 2m$ and hence $r$ does not divide $|G_\infty|$, contradicting the fact that $2^d-1 \mid |G_\infty|$. Hence $d=2m$. 

\medskip
{\it Claim: If $m$ is even then $d=2m-2$.}\quad 
Suppose first that $m$ is even and $m>4$. Then $|G_\infty|$ is divisible by a primitive prime divisor of $2^{2m-2}-1$ and we conclude (as in the previous paragraph) that $2m-2 \leq d$.  For the reverse inequality, observe that a primitive prime divisor $r$ of $2^d-1$ (exists and) divides $|G_\infty|$ and hence $d \leq 2m-2$. So $d=2m-2$ in this case.
If $m=4$, then we one can check directly that $|\PSU_4(2)|$ does not divide $|\GL_d(2)|$ for $d\leq 5$ and hence $d\geq6$. If $d>6$, then a primitive prime divisor of $2^d-1$ exists and divides $|G_\infty|$ which in turn forces $d\leq 6$, which is a contradiction. Thus $d=2m-2=6$ in this case also.

Now we consult the list of small dimensional characteristic $2$ representations of $G_\infty$ in \cite[Proposition 5.4.11]{kleidmanliebeck}. When $m$ is even, the only $(2m-2)$-dimensional characteristic $2$ representation occurs when $m=4$. Similarly, when $m$ is odd, the only $2m$-dimensional characteristic 2 representation occurs when $m=5$. In both cases, one can check directly that $2^d-1$ does not divide the order of $G_\infty$, completing the proof.
\end{proof}

We next eliminate, with a single exception, the last infinite family, namely case (5), together with one further group from case (6).

\begin{Lem}
Suppose that $\ep \in \{+,-\}$ and $m\geq2$, and that either
$\rm{P\Omega}^{\ep}_{2m}(3)\leq G_\infty \leq \rm{PO}^{\ep}_{2m}(3)$,
or $G_\infty = \rm{P\Omega}^{+}_{8}(3)\,:\,\Sym(3)$. Then $m=2, \ep=-$, $d=4$ and $G_\infty\cong \Sym(6)\cong\Sp_4(2)$, 
as in Theorem B(c).
\end{Lem}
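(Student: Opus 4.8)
The plan is to argue as in the proofs of Lemma~\ref{l:sp2maff} and the previous lemma, exploiting that $G_\infty$ is a transitive linear group: we have already noted that $G_\infty$ embeds in $\GL_d(2)$ and, being $2$-transitive on $\Omega=N$, acts transitively on the $2^d-1$ nonzero vectors of $N$, so that $2^d-1$ divides $|G_\infty|$. In each case under consideration $\rm{P\Omega}^{\ep}_{2m}(3)\leq G_\infty$, and $|G_\infty|$ is at most $6$ times $|\rm{PO}^{\ep}_{2m}(3)|$, for which we use the standard order formula for the projective orthogonal group over $\mathbb{F}_3$.

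First I would bound $d$ from above. Since $2^d-1$ is odd, it divides the odd part of $|G_\infty|$; extracting the $2$-part of $|\rm{PO}^{\ep}_{2m}(3)|$, an elementary $2$-adic computation shows it to be of size roughly $2^{3m}$, and this yields an upper bound for $d$ of the shape $2m^2\log_2 3 - 3m + c$ for a small absolute constant $c$. Next I would bound $d$ from below: $\rm{P\Omega}^{\ep}_{2m}(3)$ is a subgroup of $\GL_d(2)$, so $d$ is at least the minimal dimension of a faithful $\mathbb{F}_2$-representation of $\rm{P\Omega}^{\ep}_{2m}(3)$, a quantity (read from the tables of Landazuri--Seitz, or the relevant proposition of Kleidman--Liebeck~\cite{kleidmanliebeck}) that grows like a fixed positive power of $3^m$ and so exceeds the upper bound for every $m\geq3$. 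The triality group $\rm{P\Omega}^{+}_{8}(3)\,:\,\Sym(3)$ is dealt with in the same way, since any faithful $\mathbb{F}_2$-representation of it restricts to a faithful one of $\rm{P\Omega}^{+}_{8}(3)$. This forces $m=2$.

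For $m=2$ and $\ep=+$ we have $|\rm{PO}^{+}_{4}(3)|=2^6\cdot3^2$, with odd part $9$, so $2^d-1\mid9$ forces $d\leq2$, contradicting $d\geq3$ (which holds since $n=2^d>2\lambda+2\geq4$). For $m=2$ and $\ep=-$ we have $|\rm{PO}^{-}_{4}(3)|=2^4\cdot3^2\cdot5$, with odd part $45$, and $2^d-1\mid45$ together with $d\geq3$ forces $d=4$. Here $\rm{PO}^{-}_{4}(3)\cong\Sym(6)\cong\Sp_4(2)$, and its unique index-$2$ subgroup is $\rm{PSO}^{-}_{4}(3)=\rm{P\Omega}^{-}_{4}(3)\cong\Alt(6)$. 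By Proposition~\ref{p:L0} and Theorem~\ref{t: fischer}, $G_\infty=\langle\E_\infty\rangle$ where $\E_\infty$ is the $G_\infty$-class of projective images of reflections of the $\mathbb{F}_3$-formed space; a reflection has determinant $-1$ while every scalar matrix has determinant $1$, so the image of a reflection lies outside $\rm{PSO}^{-}_{4}(3)$. Hence $\E_\infty\not\subseteq\rm{P\Omega}^{-}_{4}(3)$, so $G_\infty\neq\rm{P\Omega}^{-}_{4}(3)$ and therefore $G_\infty=\rm{PO}^{-}_{4}(3)\cong\Sym(6)\cong\Sp_4(2)$. Then $d=4$ (as also follows from Lemma~\ref{l: ortho sym} or Lemma~\ref{l:sp2maff}), giving $m=2$, $\ep=-$, $d=4$ and the conclusion of Theorem~B(c).

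The main obstacle is the lower bound on $d$ for small $m$: to beat the order bound one needs the actual minimal faithful characteristic-$2$ representation degrees of $\rm{P\Omega}^{\ep}_{6}(3)\cong\PSL_4(3)$ or $\PSU_4(3)$, and of $\rm{P\Omega}^{+}_{8}(3)$, rather than mere asymptotics; with these in hand (and, if necessary, a primitive-prime-divisor check via Lemma~\ref{l:bang} on the finitely many remaining values of $d$), all $m\geq3$ are excluded and the rest is routine bookkeeping with group orders.
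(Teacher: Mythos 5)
Your proposal is correct and follows essentially the same route as the paper: both exclude $m\geq 3$ (and the triality group ${\rm P\Omega}^{+}_{8}(3)\,{:}\,\Sym(3)$) by playing the divisibility $2^d-1\mid |G_\infty|$ against the Landazuri--Seitz/Kleidman--Liebeck lower bounds for the minimal faithful $\mathbb{F}_2$-representation degree of ${\rm P\Omega}^{\ep}_{2m}(3)$, falling back on the actual degrees for the small cases exactly as the paper does (``by using the bounds \dots more carefully''). The only divergence is in the $m=2$ endgame, where the paper rules out $\ep=+$ via almost-simplicity of $G_\infty$ (Proposition~\ref{p:L1}) and identifies $G_\infty\cong\Sym(6)$ by citing Aschbacher's list, while you use the odd part of $|{\rm PO}^{+}_{4}(3)|$ and a determinant argument showing the reflections in $\E_\infty$ lie outside ${\rm PSO}^{-}_{4}(3)$ --- both are valid.
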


\begin{proof}
If $G_\infty = \rm{P\Omega}^{+}_{8}(3)\,:\,\Sym(3)$, then set $m=4$. 
First we use the results of \cite{landazuriseitz} as found, for instance, in \cite[Table 5.3.A]{kleidmanliebeck} to prove that $m<4$.
Suppose to the contrary that $m \geq 4$, and note that (for example, by \cite[Table 1 and Lemma 2.2]{praegerseressyalcinkaya}) $|\rm{O}^{\ep}_{2m}(3)|< 3^{2m^2-1}$. This is an upper bound for $|G_\infty|$ if $m>4$, while if $m=4$ then $G_\infty\leq \rm{P\Omega}^{+}_{8}(3)\,:\,\Sym(3)$ has order less than $3^{2m^2+1}=3^{33}$. On the other hand $2^{d} - 1$ divides $|G_\infty|$ and so $|G_\infty|> 2^{d-1}$. Moreover since $\rm{P\Omega}^{\ep}_{2m}(3)$ is contained in $\GL_d(2)$, we conclude 
from \cite[Table 5.3.A]{kleidmanliebeck} that $d=\dim(N) \geq 3^{2m-4}$.
From this we deduce that $2^{d-1}$ is greater than the upper bound for 
$|G_\infty|$, which is a contradiction. 

Thus $m\leq 3$. Indeed, by using the bounds in \cite[Table 5.3.A]{kleidmanliebeck} more carefully, we deduce that $m= 2$. 
Since $G_\infty$ is an almost simple group, by Proposition~\ref{p:L1}, we must have $\ep=-$, and so  $\rm{P\Omega}^-_{4}(3) \leq G_\infty \leq \rm{PO}^-_{4}(3)$.
Examining the list in \cite[P. 46]{aschbacher} we see that $G_\infty \cong \Sym(6)$ and the fact that $d=4$ follows from Lemma~\ref{l: ortho sym}. 
\end{proof}

Finally we show that the remaining groups in case (6) do not arise.

\begin{Lem}\label{l: fisch}
$G_\infty$ is not $\Fi_{22}, \Fi_{23}$ or $\Fi_{24}$.
\end{Lem}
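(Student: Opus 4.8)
The plan is to play the divisibility constraint $2^d-1 \mid |G_\infty|$ off against a lower bound on $d$ coming from the fact that $G_\infty$ carries a faithful $\mathbb{F}_2$-module of dimension $d$.

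Recall from the set-up of this subsection that $G_\infty$ embeds in $\GL_d(2)$, that $n=2^d$ by Lemma~\ref{l:affsizen}, and that $G_\infty$ acts transitively on the $2^d-1$ nonzero vectors of $N$ (equivalently, on $\Omega\setminus\{\infty\}$, a set of size $n-1=2^d-1$). First I would record the resulting upper bound on $d$: since $2^d-1$ divides $|G_\infty|$ we have $2^d \leq |G_\infty|+1$, so $d \leq \log_2(|G_\infty|+1)$; reading the orders off from \cite{atlas} this gives $d\leq 45$ if $G_\infty\cong\Fi_{22}$, $d\leq 61$ if $G_\infty\cong\Fi_{23}$, and $d\leq 81$ if $G_\infty\cong\Fi_{24}$. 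Next I would record the opposing lower bound: because $G_\infty$ acts faithfully and linearly on the $d$-dimensional $\mathbb{F}_2$-space $N$, $d$ is at least the minimal dimension of a faithful $\mathbb{F}_2$-representation of $G_\infty$. The $2$-modular character data for the Fischer groups (available from the modular ATLAS, or via the bounds collected alongside \cite{landazuriseitz} and \cite[Table~5.3.A]{kleidmanliebeck}) shows that this minimal dimension is $78$ for $\Fi_{22}$, $782$ for $\Fi_{23}$, and at least $3774$ for $\Fi_{24}$. In every case the lower bound strictly exceeds the upper bound, which is the desired contradiction, so $G_\infty$ is none of $\Fi_{22},\Fi_{23},\Fi_{24}$.

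I do not anticipate any genuine obstacle: once the minimal faithful $\mathbb{F}_2$-degrees are located the argument is a one-line numeric comparison. If one prefers to avoid modular representation theory altogether, an alternative is to sharpen the bound on $d$ by a primitive-prime-divisor argument exactly as in the proof of Lemma~\ref{l:sp2maff} --- every prime dividing $2^d-1$ must divide $|G_\infty|$, and since only small primes occur in $|\Fi_n|$ this forces $d$ into a short explicit list (for instance $d\in\{2,3,4,6,12\}$ when $G_\infty\cong\Fi_{22}$) --- and then to eliminate the few surviving values using the minimal faithful \emph{permutation} degrees of the Fischer groups, exactly as was done in Lemma~\ref{l:alfi}, together with the ATLAS list of their maximal subgroups. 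This route is longer and still appeals to character-table information for the sporadic groups, so the version above seems preferable.
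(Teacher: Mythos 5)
Your proposal is correct and is essentially the paper's own argument: the paper likewise combines the divisibility $2^d-1\mid |G_\infty|$ with the minimal faithful $\mathbb{F}_2$-representation degrees $78$, $782$, $3774$ (citing Jansen's tables) to get $2^d-1>|G_\infty|$, a contradiction. Your phrasing as a clash between an upper and a lower bound on $d$ is just a repackaging of the same one-line numeric comparison.
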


\begin{proof}
Suppose that $G_\infty$ is one of these groups.
The dimensions $d$ of minimal $\mathbb{F}_2$-representations for $\Fi_{22}, \Fi_{23}$ and $\Fi_{24}$ are $78$, $782$ and $3774$ respectively \cite{jansen}. In each case, $2^d-1 > |G_\infty|$, a contradiction.
\end{proof}

Now, on combining Proposition \ref{p:L1} with Lemmas \ref{l: ortho sym} -- \ref{l: fisch} we obtain:

\begin{Cor}\label{c: solvable normal}
If Hypotheses~\ref{hyp4.1} hold, $n> 2\lambda +2$, and $G$ contains a non-trivial solvable normal subgroup, then $G \cong 2^{2m}.\Sp_{2m}(2)$, $n=2^{2m}$, and the stabilizer of a point is isomorphic to $\Sp_{2m}(2)$, for some $m\geq2$.
\end{Cor}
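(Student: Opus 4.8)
The plan is to assemble the conclusion from the results already established in this subsection together with the affine-case set-up. Since $G$ contains a non-trivial solvable normal subgroup $N$, the discussion preceding Lemma~\ref{l:affsizen} applies: $N$ is elementary abelian and acts regularly on $\Omega$, we have $G=N\rtimes G_\infty$ where $G_\infty=\pi_\infty(\De)=\stab_G(\infty)$, and $G_\infty$ acts faithfully and linearly on $N$, transitively on the set $N^*$ of non-zero vectors. By Lemma~\ref{l:affsizen}, $p=2$ and $n=|N|=2^d$ with $d=\dim_{\mathbb{F}_2}N$, and by Proposition~\ref{p:L1}, $G_\infty$ is almost simple and lies in one of the six classes of Theorem~\ref{t: fischer}. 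So the task is purely one of sorting through those six classes.

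First I would dispose of the classes that cannot occur. Lemma~\ref{l: ortho affine} rules out class~(2), namely the groups ${\rm O}_{2m}^\pm(2)$, together with the group $\Omega_8^+(2):\Sym(3)$ from class~(6). The lemma on $\rm{PSU}_m(2)$ rules out class~(4). The lemma on orthogonal groups over $\mathbb{F}_3$ shows that class~(5), together with the remaining group $\rm{P\Omega}_8^+(3):\Sym(3)$ from class~(6), can occur only as $G_\infty\cong\Sym(6)\cong\Sp_4(2)$ with $d=4$. Finally Lemma~\ref{l: fisch} eliminates the Fischer groups $\Fi_{22},\Fi_{23},\Fi_{24}$, finishing class~(6). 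By Lemma~\ref{l: ortho sym}, class~(1) (the symmetric groups $\Sym(m)$) also reduces to $G_\infty\cong\Sym(6)\cong\Sp_4(2)$ with $d=4$. Hence the only surviving possibility is $G_\infty\cong\Sp_{2m}(2)$ for some $m\geq2$, arising from class~(3) and from the various reductions just listed.

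It then remains to pin down $d$ and reassemble $G$. Lemmas~\ref{l: ortho sym} and~\ref{l:sp2maff} show that whenever $G_\infty\cong\Sp_{2m}(2)$ (with $m\geq2$) acts transitively on the non-zero vectors of the $\mathbb{F}_2$-module $N$, that module has dimension $d=2m$. Consequently $N\cong(\mathbb{F}_2)^{2m}$, so $n=|N|=2^{2m}$; moreover $G=N\rtimes G_\infty$ has the shape $2^{2m}.\Sp_{2m}(2)$, and $\stab_G(\infty)=\pi_\infty(\De)=G_\infty\cong\Sp_{2m}(2)$. Since $n>2\lambda+2\geq4$ forces $m\geq2$, this is exactly the asserted statement. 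One should also record that the designs $\De^a$ of Example~\ref{ex2} realise this outcome (via Lemma~\ref{l: symp properties}), so the case is genuinely non-vacuous.

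I do not expect a substantive obstacle here: the proof is a synthesis of Proposition~\ref{p:L1} and Lemmas~\ref{l: ortho sym}--\ref{l: fisch}, with no new argument. The only care required is bookkeeping — checking that every member of each of the six Theorem~\ref{t: fischer} classes, \emph{including} the almost-simple extensions and the sporadic entries of class~(6), is accounted for by exactly one of the cited lemmas, and that the reductions of classes~(1) and~(5) really land inside the $\Sp_{2m}(2)$ family rather than leaving a stray example behind.
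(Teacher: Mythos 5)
Your proposal is correct and follows essentially the same route as the paper, which states this corollary as an immediate combination of Proposition~\ref{p:L1} with Lemmas~\ref{l: ortho sym}--\ref{l: fisch}; your sorting of the six Fischer classes and the identification $d=2m$, $n=2^{2m}$, $G=N\rtimes G_\infty\cong 2^{2m}.\Sp_{2m}(2)$ is exactly the intended bookkeeping.
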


\noindent
{\it Proof of Theorem B.}\quad The proof follows immediately from Proposition \ref{p: small lambda}, and Corollaries \ref{c: no solvable normal} and \ref{c: solvable normal}.

\section{Proof of Theorem C}\label{s: thmc} 

Throughout this section we assume that Hypotheses~\ref{hyp4.1} hold.  In particular, the assumptions of Theorem C hold.  

The proof we present in this case originated with the simple observation that, for the design $\De^a$ described in Example~\ref{ex2}, each maximal totally isotropic subspace of $\Omega^a=V$ coincides with (the set of points of) a maximal Boolean subdesign of $\De^a$. 
A similar property was seen to hold in $\De^+$ and this suggested that the theory of polar spaces may shed light on the geometry of designs satisfying Hypotheses~\ref{hyp4.1}. This turned out to be the case and led, eventually, to the proof that we now present.

In what follows we only need to consider polar spaces in which all lines are incident with exactly $3$ points. Such spaces were classified by Seidel~\cite{Seidel} (available on-line as a preprint, and also published in his `Selected works' \cite{Se}). We describe his result below using graph-theoretic language.
In that direction, we begin with some definitions: for $\infty\in\Omega$, we define $\G_{\De,\infty}=(V,E)$ as the graph with vertex set $V=\Omega\setminus\{\infty\}$, and edge set $E$ such that $\{a,b\}\in E$ if and only if $\{\infty, a, b\}\in\C$. This graph is called the \emph{derived graph} of the design $\De$.\footnote{We refer to $\G_{\De,\infty}$ as the derived graph {\it of the design} $\De$, but note that the definition of this graph refers to $\C$, rather than $\B$. Thus the definition could be extended to a more general setting including, in particular, all regular two graphs.}

\begin{Def}\label{d:tp} 
A graph $\G:=(V,E)$ satisfies the \emph{triangle property} 
if its edge set $E\ne\emptyset$ and, for each pair of adjacent vertices $u,v\in V$, there exists a vertex $w \in V$, adjacent to both $u$ and $v$, such that every vertex $x\in V \backslash \{u,v,w\}$ is adjacent to exactly one or exactly three vertices in the set $\{u,v, w\}$. We denote by $\mathcal{F}(u,v)$ the set of all vertices $w$ with this property, and if $|\mathcal{F}(u,v)|=1$, for all $u, v\in V$, then we say that $\G$ has the \emph{strong triangle property}. In this case we denote the unique vertex in $\mathcal{F}(u,v)$ by $f(u,v)$. 
\end{Def}

\begin{Lem}[{\cite[Lemma 4.2]{Seidel}}]\label{l:uniquef}
If a graph $\G$ has the triangle property and, further, if no vertex of $\G$ is adjacent to every other vertex, then $\G$ has the strong triangle property.
\end{Lem}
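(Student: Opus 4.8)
The plan is to show that the strong triangle property fails exactly when some vertex is adjacent to every other vertex, so assume $\G=(V,E)$ has the triangle property but is not strong; I must produce a vertex adjacent to all others. By hypothesis there are adjacent $u,v$ with $|\mathcal F(u,v)|\geq 2$, say $w_1,w_2\in\mathcal F(u,v)$ with $w_1\neq w_2$. The first observation to nail down is that $w_1$ and $w_2$ are themselves adjacent: apply the defining condition of $\mathcal F(u,v)$ with $w=w_1$ to the vertex $x=w_2$. Since $w_2$ is adjacent to $u$ and $v$ (as $w_2\in\mathcal F(u,v)$), $w_2$ is adjacent to an odd number — so one or three — of $\{u,v,w_1\}$; being already adjacent to the two vertices $u,v$, it must also be adjacent to $w_1$. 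Thus $\{u,v,w_1,w_2\}$ spans a clique.

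Next I would chase the parity/adjacency condition around this clique to force every other vertex into it. Fix an arbitrary vertex $x\in V\setminus\{u,v,w_1,w_2\}$. Applying the triangle property data for the pair $(u,v)$ with the two witnesses $w_1$ and $w_2$ gives: $x$ is adjacent to an odd number of $\{u,v,w_1\}$ and to an odd number of $\{u,v,w_2\}$. Writing $a,b$ for the adjacency indicators ($0/1$) of $x$ to $u,v$, these say $a+b+\chi(xw_1)$ and $a+b+\chi(xw_2)$ are both odd, hence $\chi(xw_1)=\chi(xw_2)$; call this common value $c$. So $x$'s adjacency to $w_1$ and to $w_2$ always agree, and $a+b+c$ is odd. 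The key step is then to run the same argument with a different base edge inside the clique — e.g. the pair $(u,w_1)$ with witnesses $v$ and $w_2$ (both of which lie in $\mathcal F(u,w_1)$, which needs a short separate verification using that the $4$-clique's vertices pairwise witness each other), obtaining a second linear parity constraint on $(a,b,c)$; combining the constraints should pin down $a=b=c=1$, i.e. $x$ is adjacent to all of $u,v,w_1,w_2$.

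I expect the main obstacle to be exactly that intermediate claim: that within the clique $\{u,v,w_1,w_2\}$ every vertex is a valid triangle-property witness for every edge — for instance that $v,w_2\in\mathcal F(u,w_1)$ — because Definition~\ref{d:tp} only hands us \emph{one} witness per edge a priori, not that all clique-completions work. This is really a local lemma that needs to be extracted from the triangle property by a parity argument of the kind above (feeding clique vertices into each other's witness conditions), and it is the place where one must be careful that the "one or three" alternative is genuinely being used rather than a weaker "odd" statement that would not propagate. Once that is in hand, the parity linear algebra over $\mathbb F_2$ forces $x\in\{u,v,w_1,w_2\}^{\text{common neighbourhood}}$ for all $x$; but then, since $x$ was arbitrary, say $u$ is adjacent to every vertex of $V\setminus\{u\}$, contradicting the hypothesis that no vertex is adjacent to every other vertex. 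Hence $\mathcal F(u,v)$ is a singleton for all adjacent $u,v$, and since $E\neq\emptyset$ is already assumed in the triangle property the "strong" property follows; for non-adjacent pairs there is nothing to check. A final remark: the statement is Seidel's, so I would keep the write-up short and defer the bookkeeping of cases to a table if it gets unwieldy, mirroring the style of Lemma~\ref{l:sympeq}.
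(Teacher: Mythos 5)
The paper itself gives no proof of this statement --- it is quoted directly from Seidel --- so your attempt can only be judged on its own terms, and on those terms it has a genuine gap, located exactly where you suspected. Your opening moves are correct: if $w_1\neq w_2$ both lie in $\mathcal{F}(u,v)$, then feeding $w_2$ into the witness condition for $w_1$ forces $w_1\sim w_2$, so $\{u,v,w_1,w_2\}$ is a clique, and for every outside vertex $x$ the two parity conditions give $\chi(xw_1)=\chi(xw_2)=:c$ together with $a+b+c\equiv 1 \pmod 2$. But this system alone has four solutions, $(1,1,1)$, $(1,0,0)$, $(0,1,0)$, $(0,0,1)$, so you do need a further constraint, and the one you propose cannot be obtained. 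The membership $v\in\mathcal{F}(u,w_1)$ is harmless but also useless: the condition it imposes on $x$ is again $a+b+c\equiv 1$, so it yields nothing new. The membership that would give new information, $w_2\in\mathcal{F}(u,w_1)$, imposes that $x$ be adjacent to an odd number of $\{u,w_1,w_2\}$, i.e.\ that $a+2c=a$ be odd for \emph{every} $x$; since $u$ is also adjacent to $v$, $w_1$ and $w_2$, this membership is equivalent to the assertion that $u$ is adjacent to every other vertex. That is precisely the statement you are trying to contradict, so it cannot be established as a preliminary ``local lemma'' by shuffling clique vertices into each other's witness conditions: in the bad configuration there is indeed a universal vertex, but there is no reason it should be $u$.

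The repair is to change base edge to $\{w_1,w_2\}$ rather than to another edge of the original clique. Since $w_1\sim w_2$, the triangle property supplies some $z\in\mathcal{F}(w_1,w_2)$. Every vertex $y\notin\{w_1,w_2,z\}$ is adjacent to exactly one or three of $\{w_1,w_2,z\}$; but $u$ and $v$ are adjacent to both $w_1$ and $w_2$, and every other outside vertex is adjacent to both or to neither of them by your relation $\chi(yw_1)=\chi(yw_2)$, so in all cases the count restricted to $\{w_1,w_2\}$ is even and can only become odd if $y\sim z$. Hence $z$ is adjacent to every other vertex of the graph, contradicting the hypothesis. With this substitution your argument closes; everything before it, including the observation that the complete graph is exactly the configuration the hypothesis excludes, is sound.
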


Our next result shows the relevance of the strong triangle property for us.\footnote{We asserted above that the totally isotropic subspaces of $\Omega^a=V$ coincide with (the set of points of) a maximal Boolean subdesign of $\De^a$. This observation easily implies that Proposition~\ref{p:red} holds for the designs $\De^a$; thus Proposition~\ref{p:red} can be thought of as a generalization of this observation.}

\begin{Prop}\label{p:red}
Suppose that Hypotheses~\ref{hyp4.1} hold and $ n > 2\lambda+2$. Let $\infty\in\Omega$. Then

\begin{itemize}
\item[(i)] $\G_{\De,\infty}$ has the strong triangle property; and
\item[(ii)] every line of $\D$ containing $\infty$ is of the form $\{\infty,a,b,f(a,b)\}$.
\end{itemize}
\end{Prop}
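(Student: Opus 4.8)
The plan is to show that the derived graph $\G_{\De,\infty}$ satisfies the triangle property of Definition~\ref{d:tp}, and then apply Lemma~\ref{l:uniquef} to upgrade this to the strong triangle property; the second assertion will fall out of the proof of the first. Throughout, I will translate statements about edges of $\G_{\De,\infty}$ back into statements about collinear triples through $\infty$: an edge $\{a,b\}$ means $\{\infty,a,b\}\in\C$, which (since $\De$ is supersimple and a $2$-design) is the same as saying there is a unique $c\in\Omega$ with $\{\infty,a,b,c\}\in\B$.

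First I would verify $E\ne\emptyset$: since $n>2\lambda+2$ there are lines through $\infty$, so some collinear triple through $\infty$ exists. Now fix an edge $\{u,v\}$ of $\G_{\De,\infty}$ and let $w$ be the unique fourth point of the line through $\infty,u,v$, so $\{\infty,u,v,w\}\in\B$. Then $w$ is adjacent to both $u$ and $v$ in $\G_{\De,\infty}$ (the triples $\{\infty,u,w\}$ and $\{\infty,v,w\}$ are collinear, being subsets of this line). The heart of the argument is to show that $w\in\F(u,v)$: for every $x\in V\setminus\{u,v,w\}$, the number of vertices of $\{u,v,w\}$ adjacent to $x$ is odd. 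Adjacency of $x$ to $u$ means $\{\infty,u,x\}\in\C$, etc. So I need: the number of collinear triples among $\{\infty,u,x\},\{\infty,v,x\},\{\infty,w,x\}$ is odd. Consider the $4$-subset $\{\infty,u,v,x\}$ of $\Omega$: since $(\Omega,\C)$ is a regular two-graph, the number of collinear triples in it is $0$, $2$, or $4$; we already know $\{\infty,u,v\}\in\C$, so the number of collinear triples among $\{\infty,u,x\}$, $\{\infty,v,x\}$, $\{u,v,x\}$ is odd. The analogous statements for $\{\infty,u,w,x\}$ and $\{\infty,v,w,x\}$ (again using that $\{\infty,u,w\},\{\infty,v,w\}\in\C$) tell me the parities of $\{\{\infty,u,x\},\{\infty,w,x\},\{u,w,x\}\}$ and of $\{\{\infty,v,x\},\{\infty,w,x\},\{v,w,x\}\}$. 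Finally, applying the two-graph condition to $\{u,v,w,x\}$ — and here I use \eqref{e:symdiff}, which forces $\{u,v,w\}$ itself to be collinear (it is the line $\{\infty,u,v,w\}$ minus $\infty$ together with... more precisely $\{u,v,w\}\subset\{\infty,u,v,w\}$, a line, so $\{u,v,w\}\in\C$) — gives the parity of $\{\{u,v,x\},\{u,w,x\},\{v,w,x\}\}$. Adding these four parity relations (over $\mathbb F_2$) and cancelling the six "non-$\infty$" triples, which each appear twice, I am left with: the sum of the indicator functions of $\{\infty,u,x\},\{\infty,v,x\},\{\infty,w,x\}$ is odd. This is exactly the triangle-property condition, so $w\in\F(u,v)$ and the triangle property holds.

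Next I would observe that no vertex of $\G_{\De,\infty}$ is adjacent to every other vertex: a vertex $a$ adjacent to all others would mean $\{\infty,a,x\}\in\C$ for every $x\ne\infty,a$, i.e.\ every pair $\{\infty,x\}$ (with $x\ne a$) lies on a line through $a$. Counting lines through the pair $\{\infty,a\}$ — there are $\lambda$ of them, covering $2\lambda$ points other than $\infty,a$ — this is impossible once $n-2>2\lambda$, i.e.\ $n>2\lambda+2$, which is our hypothesis. Therefore Lemma~\ref{l:uniquef} applies and $\G_{\De,\infty}$ has the strong triangle property, proving (i); moreover $\F(u,v)=\{w\}$, so $f(u,v)=w$ is precisely the fourth point of the line $\{\infty,u,v,w\}$, which is (ii).

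The main obstacle is the parity bookkeeping in the central step: one must be careful that the role of $\infty$ is genuinely different from that of $u,v,w$ (so that in the sum over the four two-graph relations the "$\infty$-free" triples cancel while the "$\infty$-containing" ones survive), and one must correctly invoke \eqref{e:symdiff}/supersimplicity to know that $\{u,v,w\}$ is itself a collinear triple so that the fourth two-graph relation is available in the form needed. A subtle point worth checking carefully is the case analysis when $x$ happens to be collinear with $\infty$ and two of $u,v,w$ but lies on a \emph{different} line — here supersimplicity ($|B_1\cap B_2|\le 2$) guarantees no degeneracy, but it should be spelled out. Everything else is routine: the two-graph axiom is applied as a black box four times, and the non-universal-vertex check is an elementary counting argument.
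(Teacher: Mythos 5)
There is a genuine gap at the heart of your argument: the parity bookkeeping in the central step does not close. Write $p_1,p_2,p_3$ for the indicators of $\{\infty,u,x\},\{\infty,v,x\},\{\infty,w,x\}\in\C$ and $q_1,q_2,q_3$ for those of $\{v,w,x\},\{u,w,x\},\{u,v,x\}\in\C$. Your four two-graph relations are $p_1+p_2+q_3=1$, $p_1+p_3+q_2=1$, $p_2+p_3+q_1=1$ and $q_1+q_2+q_3=1$ over $\mathbb{F}_2$. Each of the six triples appears in exactly \emph{two} of the four relations --- the $\infty$-containing ones just as much as the $\infty$-free ones --- so summing all four gives $0=0$, a tautology; in fact the fourth relation is the sum of the first three, and no $\mathbb{F}_2$-combination of these relations yields the target $p_1+p_2+p_3=1$ (each $q_i$ occurs in exactly one of the first three relations, so the $q$'s can only be eliminated by taking the empty combination). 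Concretely, $p_1=p_2=p_3=0$, $q_1=q_2=q_3=1$ satisfies all four relations while violating your conclusion, so the triangle property genuinely cannot be extracted from the two-graph axiom alone. Your only invocation of \eqref{e:symdiff} is to get $\{u,v,w\}\in\C$, but as you note mid-sentence that already follows from $\{u,v,w\}$ being a subset of the line $\{\infty,u,v,w\}$; so \eqref{e:symdiff} does no work in your argument, which is a sign something is missing.

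The missing ingredient is a substantive use of \eqref{e:symdiff}, and it is exactly what the paper's proof supplies: for $\{r,s,t\}=\{u,v,w\}$ one has $\{r,s,x\}\in\C$ if and only if $\{\infty,t,x\}\in\C$, because a line $\{r,s,x,d\}$ and the line $\{\infty,u,v,w\}$ meet in $\{r,s\}$, so their symmetric difference $\{\infty,t,x,d\}$ is a line (and conversely). This gives $q_i=p_i$ for each $i$, and then the single two-graph relation for $\{u,v,w,x\}$ (equivalently, your three $\infty$-relations summed) turns $q_1+q_2+q_3=1$ into $p_1+p_2+p_3=1$, which is the triangle property. The remainder of your proposal --- nonemptiness of $E$, the choice of $w$ as the fourth point of the line through $\infty,u,v$, the counting argument showing no vertex is adjacent to all others when $n>2\lambda+2$, the appeal to Lemma~\ref{l:uniquef}, and the deduction of (ii) from $\F(u,v)=\{w\}$ --- matches the paper's proof and is fine.
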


\begin{proof} 
First we prove the triangle property for $\G_{\De,\infty}$. Since $\infty$ lies in $2\lambda$ triples in $\C$, the edge set $E$ of  $\G_{\De,\infty}$ is non-empty.  Consider an edge $\{a,b\}\in E$, or equivalently  
$\{\infty,a,b\}\in\mathcal{C}$. Then there exists $c\in\Omega$ such that 
$\{\infty,a,b,c\}\in\B$, and therefore also
$$
\{\infty,a,c\},\{\infty,b,c\},\{a,b,c\}\in\mathcal{C}.
$$ 
Thus $c$ is adjacent to both $a$ and $b$ in $\G_{\De,\infty}$.   
Let  $x\in \Omega\setminus \{\infty, a,b,c\}=V \setminus \{a,b,c\}$,
and consider $\{a,b,c,x\}$.  Since $(\Omega,\C)$ is a regular two-graph and $\{a,b,c\}\in\mathcal{C}$, there are exactly one or three pairs $\{ r,s\}\subset\{a,b,c\}$ such that $\{r,s,x\}\in\C$.

\medskip\noindent
\emph{Claim. $\{ r,s,x\}\in\C$ if and only if $\{t,x\}\in E$, where $\{r,s,t\}=\{a,b,c\}$.}\quad  We prove this for the pair $\{a,b\}$, the proofs for the other pairs being identical. The triple $\{a,b,x\}\in\C$ if and only if there exists $d$ such that $\{a,b,x,d\}\in\B$. Using  property (\eqref{e:symdiff}) and the fact that $\{\infty,a,b,c\}\in\B$, we see that this holds if and only if  there exists $d$ such that$\{\infty,c,x,d\}\in\B$. The latter property is equivalent to the condition $\{\infty,c,x\}\in\C$, which in turn holds if and only if $\{c,x\}\in E$. This proves the claim. 

\medskip
Since there are exactly one or three pairs $\{ r,s\}\subset\{a,b,c\}$ such that $\{r,s,x\}\in\C$, it follows from the claim that $x$ is adjacent in $\G_{\De,\infty}$ to exactly one or three vertices in $\{a,b,c\}$. Thus $\G_{\De,\infty}$ has the triangle property.
Now since $n > 2\lambda+2$, for each vertex $v$ of $\G_{\De,\infty}$, there exists a vertex $u \notin \overline{\infty,v}$, that is, a vertex $u$ of $C_\infty$ which is not adjacent to $v$. Therefore, by 
Lemma~\ref{l:uniquef}, $\G_{\De,\infty}$ has the strong triangle property, and part (i) is proved.   

For part (ii), consider a line $B=\{\infty, a,b,c\}\in\B$ containing $\infty$. The arguments above show that the vertex $c$ has the property of Definition~\ref{d:tp} relative to $\{a,b\}$ and so $c\in\mathcal{F}(a,b)$. Since $\G_{\De,\infty}$ has the strong triangle property, this means that $c=f(a,b)$.   
\end{proof}

For the next part we need an alternative definition of the designs $\De^\ep$ from Example~\ref{ex1}: let $V$ be a $2m$-dimensional vector space over $\Fb_2$ and let $Q^{\ep'}:V\to \Fb_2$ be a non-degenerate quadratic form of type $\ep'$ (for $\ep'=\pm$) which polarizes to the 
non-degenerate alternating form $\varphi$ on $V$. Write $\ep=(1-\ep'1)/2$ and define
\begin{align*}
 \Omega^\ep &= \{ v\in V \mid Q^{\ep'}(v)=0\}; \\
 \B^\ep &= \{ \{v_1,v_2,v_3,v_4\} \mid v_1,v_2,v_3,v_4\in\Omega^\ep, \sum\limits_{i=1}^4 v_i=\textbf{0}\}.
\end{align*}
Now we define $\De^\ep=(\Omega^\ep, \B^\ep)$. The fact that this definition is consistent with the definition given in Example~\ref{ex1} is (almost) immediate for $\ep'=+$; for $\ep'=-$ it is an exercise.

We are now ready to state Seidel's classification result. We discussed it above in terms of polar spaces, although the statement we use concerns regular two-graphs whose derived graphs have the strong triangle property.

\begin{Thm}[Seidel's Classification Theorem]\label{t: seidel}
Suppose that a graph $\G=(V,E)$ satisfies the triangle property. Then $\G$ is one of the following:
\begin{enumerate}
 \item an edgeless graph, i.e. $E=\emptyset$;
 \item a complete graph;
 \item $\G_{\De^a, \textbf{0}}$, the derived graph of a design $\De^a$ at the vertex $\textbf{0}$;
 \item $\G_{\De^\ep, \textbf{0}}$, the derived graph of a design $\De^\ep$ at the vertex $\textbf{0}$, for some $\ep\in\mathbb{F}_2$.
\end{enumerate}
Conversely, all of the listed graphs satisfy the triangle property.
\end{Thm}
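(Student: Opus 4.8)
The statement has two halves, and the content lies in the forward implication. The converse --- that each of the four graphs has the triangle property --- is a routine verification: it is immediate for the edgeless and the complete graphs, while for $\G_{\De^a,\textbf{0}}$ and $\G_{\De^\ep,\textbf{0}}$ it is essentially the computation already carried out in the proof of Lemma~\ref{l: symp properties}. Indeed, in the coordinate models recalled just before the theorem the vertices are respectively the non-zero vectors of $(V,\varphi)$ and the non-zero vectors $v$ with $Q^{\ep'}(v)=0$, with $u$ adjacent to $v$ exactly when $\varphi(u,v)=0$; given an edge $\{u,v\}$ one takes $w:=u+v$ (again a vertex) as witness and checks, using $\varphi(x,u)+\varphi(x,v)=\varphi(x,u+v)$, that an external vertex $x$ is adjacent to exactly one or exactly three of $u,v,u+v$.

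For the forward implication, let $\G=(V,E)$ have the triangle property. If $E=\emptyset$ we are in case (1) and if $\G$ is complete we are in case (2); so assume neither. The first step is to reduce to the strong triangle property: a short argument shows that a non-complete graph with the triangle property cannot possess a vertex adjacent to every other vertex, so Lemma~\ref{l:uniquef} applies and $\G$ has the strong triangle property, with $f(u,v)$ the unique witness of an edge $\{u,v\}$. The second step is to regard the triples $\ell(u,v):=\{u,v,f(u,v)\}$, over all edges $\{u,v\}$, as the \emph{lines} of a point--line geometry on $V$. Uniqueness of the witness forces $f(u,f(u,v))=v$, so any two points of $\ell(u,v)$ have the third as their common $f$-value; hence every line has exactly three points, collinearity of distinct points coincides with adjacency in $\G$, and the triangle property says precisely that a point off a line is collinear with exactly one, or with all, of the points of that line --- the Buekenhout--Shult ``one-or-all'' axiom. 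Since $\G$ has no vertex adjacent to all others the resulting polar space is non-degenerate, and it has rank at least $2$ because $E\ne\emptyset$.

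The third step is to classify finite non-degenerate polar spaces in which every line has exactly three points. Since a line is a projective line of size three, the underlying field must be $\mathbb{F}_2$; the classical polar spaces over $\mathbb{F}_2$ are exactly the symplectic spaces --- whose points are the non-zero vectors of a non-degenerate alternating space $(V,\varphi)$ with adjacency $\varphi(u,v)=0$, which is $\G_{\De^a,\textbf{0}}$, so case (3) holds --- and the orthogonal spaces ${\rm O}^{\pm}_{2m}(2)$ --- whose points are the non-zero singular vectors of a space $(V,Q^{\ep'})$, again with adjacency $\varphi(u,v)=0$, which is $\G_{\De^\ep,\textbf{0}}$, so case (4) holds. (The parabolic geometry ${\rm O}_{2m+1}(2)$ is classically isomorphic to $\Sp_{2m}(2)$ and yields nothing new; the rank-$2$ case amounts to the fact that a generalized quadrangle with three points per line has order $(2,t)$ for $t\in\{1,2,4\}$, all classical and already on the list.)

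The step I expect to be the main obstacle is this last one: having available the full classification of finite polar spaces with three-point lines, and in particular the absence of sporadic examples. To keep the argument self-contained, as elsewhere in this paper, one would instead coordinatise directly, in the spirit of Proposition~\ref{p: small lambda}: adjoin a point $\infty$ to $V$, form the $\mathbb{F}_2$-vector space $\widehat V$ obtained from the free $\mathbb{F}_2$-module on $V\cup\{\infty\}$ by imposing $x+x=\infty$ and $u+v+f(u,v)=\infty$ over all edges $\{u,v\}$, prove that $\widehat V$ is finite-dimensional and that $V$ embeds in it with adjacency given by the vanishing of an induced alternating form, and finally determine whether $V$ fills all of $\widehat V$ or is the zero set of a quadratic form refining that alternating form. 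Showing the embedding is injective and that no vertex is lost is where the strong triangle property does its real work, and it is there that the symplectic/orthogonal dichotomy emerges.
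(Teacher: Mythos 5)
First, a point of comparison: the paper does not prove this statement at all --- it is Seidel's theorem, imported verbatim from \cite{Seidel} (with Shult's earlier special case in \cite{Sh}), and the intended ``proof'' is a citation. So any complete argument you supply is already doing more than the paper; the question is whether your sketch actually closes. It does not, for two concrete reasons. The first is your opening reduction: you assert that ``a short argument shows that a non-complete graph with the triangle property cannot possess a vertex adjacent to every other vertex.'' This is false as stated. The windmill graph (take $k\geq 2$ triangles $\{v,a_i,b_i\}$ glued at the common vertex $v$) satisfies Definition~\ref{d:tp}: for the edge $\{a_i,b_i\}$ the unique common neighbour $v$ is a valid witness, and for the edge $\{v,a_i\}$ the witness $b_i$ works, since every external vertex is adjacent to exactly one of the three (namely $v$). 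This graph is neither complete nor edgeless, yet $v$ dominates it, so Lemma~\ref{l:uniquef} does not apply and your reduction to the strong triangle property breaks. (This also shows the theorem needs the hypothesis ``no vertex is adjacent to all other vertices,'' which Seidel's original statement carries and which, in the paper's application, is supplied by $n>2\lambda+2$ inside the proof of Proposition~\ref{p:red}; you cannot manufacture it from non-completeness alone.)

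The second and larger gap is the final step. Having built the point--line geometry with three points per line, you invoke ``the classification of finite non-degenerate polar spaces in which every line has exactly three points'' --- but that classification \emph{is} the theorem being proved (Shult--Seidel), so as written the argument is circular, and the Buekenhout--Shult--Tits classification of higher-rank polar spaces is a far heavier hammer than what is needed here in any case. You recognise this and sketch the correct repair --- coordinatise directly by forming the $\mathbb{F}_2$-space generated by $V\cup\{\infty\}$ modulo the relations $u+v+f(u,v)=\infty$, recover the alternating form from adjacency, and separate the symplectic case from the orthogonal one via a quadratic refinement --- but this is precisely the content of Seidel's proof, and none of it is executed: finite-dimensionality, injectivity of the embedding, non-degeneracy of the induced form, and the $\Sp$ versus ${\rm O}^{\pm}$ dichotomy are all left as declarations of intent. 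Your converse verification and the translation of the triangle property into the ``one-or-all'' axiom are fine, but the forward direction as it stands is a proof outline with its decisive steps missing, one of which (the dominating-vertex reduction) is not merely missing but wrong.
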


We are almost ready to derive Theorem C from Seidel's classification; we need one lemma first.

\begin{Lem}\label{l: reconstruct}
 Suppose that $\De_1$ and $\De_2$ are two designs satisfying Hypotheses~\ref{hyp4.1} with $n> 2\lambda+2$. Let $\infty_1$ (resp. $\infty_2$) be a point in $\De_1$ (resp. $\De_2$). If $\G_{\De_1, \infty_1}$ and $\G_{\De_2,\infty_2}$ are isomorphic as graphs, then $\De_1$ and $\De_2$ are isomorphic as designs.
\end{Lem}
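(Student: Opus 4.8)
The plan is to reconstruct the line set $\B_i$ of each $\De_i$ purely from the derived graph $\G_{\De_i,\infty_i}$, so that an isomorphism of derived graphs yields an isomorphism of designs. By Proposition~\ref{p:red}, for a design $\De$ satisfying Hypotheses~\ref{hyp4.1} with $n>2\lambda+2$ and a point $\infty$, the graph $\G=\G_{\De,\infty}$ has the strong triangle property, and every line of $\De$ through $\infty$ has the form $\{\infty,a,b,f(a,b)\}$ where $f(a,b)$ is the unique vertex of $\mathcal{F}(a,b)$. Thus the lines through $\infty$ are completely determined by the graph $\G$ (the function $f$ is defined purely graph-theoretically). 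The remaining task is to recover the lines \emph{not} through $\infty$ from $\G$ as well.

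First I would show that the lines not through $\infty$ are also graph-theoretically determined. Let $B=\{a,b,c,d\}\in\B$ with $\infty\notin B$. Since $(\Omega,\C)$ is a regular two-graph and $\{a,b,c\},\{a,b,d\},\{a,c,d\},\{b,c,d\}$ are all collinear, an analysis of the $4$-subsets $\{\infty,a,b,c\}$, etc., together with property \eqref{e:symdiff}, should pin down exactly which edges among $\{a,b,c,d\}$ lie in $E$ and how $\infty$ relates to the line. Concretely: for each pair, say $\{a,b\}$, consider whether $\infty\in\overline{a,b}$. If $\infty\in\overline{a,b}$ then $\{\infty,a,b,f(a,b)\}\in\B$ and, by \eqref{e:symdiff} applied to this line and $B$ (which meet in $\{a,b\}$), the symmetric difference $\{\infty,f(a,b),c,d\}$ is a line through $\infty$, forcing $f(a,b)\in\{c,d\}$ — but $f(a,b)$ is a vertex of $\G$, distinct from $\infty$, so this would make $B$ share three points with a line through $\infty$, which the supersimple condition forbids unless the configuration is consistent. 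Working through the (few) cases using the regular two-graph condition on $\{\infty,a,b,c\},\{\infty,a,b,d\},\{\infty,a,c,d\},\{\infty,b,c,d\}$ shows that $B$ is recognisable: a $4$-set $\{a,b,c,d\}\subseteq V$ is a line of $\De$ not through $\infty$ precisely when a certain explicit combinatorial condition on the edges within $\{a,b,c,d\}$ (governed by $f$) holds. This identifies $\B_i$ entirely in terms of $\G_{\De_i,\infty_i}$.

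Having shown that $\B_i$ — both the lines through $\infty_i$ and those not through $\infty_i$ — is a function of $\G_{\De_i,\infty_i}$ alone, the conclusion is immediate: if $\sigma:\G_{\De_1,\infty_1}\to\G_{\De_2,\infty_2}$ is a graph isomorphism, extend it to $\hat\sigma:\Omega_1\to\Omega_2$ by $\hat\sigma(\infty_1)=\infty_2$. Then $\hat\sigma$ carries $f$-triples to $f$-triples, hence lines through $\infty_1$ to lines through $\infty_2$, and carries the combinatorial pattern defining the other lines to the same pattern, hence $\B_1$ to $\B_2$. So $\hat\sigma$ is a design isomorphism.

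The main obstacle I anticipate is the case analysis in the second paragraph: verifying that a $4$-subset of $V$ not through $\infty$ is a line if and only if the edges inside it follow the prescribed $f$-governed pattern. This requires carefully combining \eqref{e:symdiff}, the supersimple (pairwise intersection $\leq 2$) condition, the regular two-graph condition, and the strong triangle property, checking several sub-configurations. It is bookkeeping rather than deep, but it is where an error would most easily creep in, so I would organise it as a short table (parallel to Table~\ref{t:sympeq}) listing, for a putative line $\{a,b,c,d\}$, the collinearity status of each triple $\{\infty,x,y\}$ and confirming consistency with \eqref{e:symdiff} in each row.
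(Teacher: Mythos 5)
Your overall strategy coincides with the paper's: extend the graph isomorphism by $\infty_1\mapsto\infty_2$, handle lines through $\infty$ via Proposition~\ref{p:red}, and recover the remaining lines using the regular two-graph property together with \eqref{e:symdiff}. However, two points need fixing. First, the deduction ``the symmetric difference $\{\infty,f(a,b),c,d\}$ is a line through $\infty$, forcing $f(a,b)\in\{c,d\}$'' is backwards: since $B$ and $\ell_1:=\{\infty,a,b,f(a,b)\}$ are distinct lines, supersimplicity forces $f(a,b)\notin\{c,d\}$, so $|B\cap\ell_1|=2$; then \eqref{e:symdiff} makes $\{\infty,f(a,b),c,d\}$ a line through $\infty$, and Proposition~\ref{p:red}(ii) gives the correct conclusion, namely $d=f(f(a,b),c)$. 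That single identity is the entire ``combinatorial condition'' you are looking for; no table is required.

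Second, your plan to prove a full ``if and only if'' characterisation of the lines avoiding $\infty$ is more than is needed, and the converse direction is precisely where your sketch is silent. The paper sidesteps it: having shown $d=f_1(f_1(a,b),c)$ in $\De_1$, one transports the two lines $\{\infty_1,a,b,f_1(a,b)\}$ and $\{\infty_1,f_1(a,b),c,d\}$ to the lines $\{\infty_2,a',b',f_2(a',b')\}$ and $\{\infty_2,f_2(a',b'),c',d'\}$ of $\De_2$ (using the already-established preservation of lines through $\infty$, which also gives $d'=f_2(f_2(a',b'),c')$), and then applies \eqref{e:symdiff} \emph{in} $\De_2$ to these two lines, which meet in $\{\infty_2,f_2(a',b')\}$: their symmetric difference $\{a',b',c',d'\}=\phi(\ell)$ is therefore a line of $\De_2$. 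This replaces your anticipated case analysis with a single application of \eqref{e:symdiff} on each side. With these corrections your argument becomes the paper's proof.
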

\begin{proof}
Let $\De_i=(\Omega_i,\B_i)$, and $\G_i:= \G_{\De_i, \infty_i}$, for $i=1,2$.
Let $\phi: \G_1\rightarrow \G_2$ be a graph isomorphism, and extend $\phi$ to a bijection $\Omega_1\rightarrow\Omega_2$ by defining $\phi:\infty_1\mapsto \infty_2$. It is sufficient to show that the image under $\phi$ of each line in $\B_1$ is a line in $\B_2$.
We begin by considering the lines containing $\infty_i$. By Proposition~\ref{p:red}, the graphs $\G_i$ have the strong triangle property, and every line of $\De_i$ containing $\infty_i$ is of the form $\{\infty_i,a,b,f_i(a,b)\}$ for $a,b$ vertices of $\G_i$ (where we write $f_i$ for the function $f$ on $\G_i$). Let $\ell=\{\infty_1,a,b,f_1(a,b)\}\in\B_1$ and let $a'=\phi(a), b'=\phi(b)$. By the definition of $f_1(a,b)$, it follows that $\phi(f_1(a,b))=f_2(a',b')$, and hence $\phi(\ell)=\{\infty_2,a',b',f_2(a',b')\}$ is a line of $\De_2$.

Now consider a line $\ell:=\{a,b,c,d\}\in\B_1$ which does not contain $\infty_1$. Then $\{a,b,c\}$ is a collinear triple from $\De_1$, and applying the two-graph property to the 4-subset $\{\infty_1,a,b,c\}$, we see that $\infty_1$ is collinear with at least one of $\{a.b\}, \{b,c\}, \{a,c\}$. Without loss of generality we may assume that $\{\infty_1,a,b\}$ is collinear so we have a second line $\ell_1:=\{\infty_1,a,b,f_1(a,b)\}\in\B_1$. Moreover, by the symmetric difference property \eqref{e:symdiff}, $\ell_1':=\{\infty_1,f_1(a,b),c,d\}$ is also a line in $\B_1$, and so by the argument of the previous paragraph, we have $d=f_1(f_1(a,b),c)$. 
Let $a'=\phi(a), b'=\phi(b), c'=\phi(c)$ and $d'=\phi(d)$. 
Applying the argument of the previous paragraph again, we see that the images under $\phi$ of $\ell_1$ and $\ell_1'$ are lines of $\B_2$ and are $\{\infty_2, a', b', f_2(a',b')\}$ and $\{\infty_2, f_2(a',b'),c',d'\}$ respectively, with $d'=f_2(f_2(a',b'),c')$. Then, by the symmetric difference property \eqref{e:symdiff} for $\De_2$, the 4-subset $\{a',b',c',d'\}=\phi(\ell)$ is also a line of $\De_2$.   This completes the proof. 
\end{proof}

\begin{proof}[Proof of Theorem C]
We assume that Hypotheses~\ref{hyp4.1} hold. If $n=2\lambda+2$, then, since $\De$ is supersimple, we conclude that $\De$ is a $3-(n,4,1)$ design. Thus  $\G_{\De,\infty}$ is the complete graph and so satisfies the triangle property.
On the other hand, if $n > 2 \lambda + 2$, then, by Proposition~\ref{p:red},
$\G_{\De,\infty}$ satisfies the triangle property. Thus this conclusion holds in all cases. 

We now apply Seidel's Classification Theorem~\ref{t: seidel} to conclude that $\G_{\De,\infty}$ is of one of the five listed types. Case (1) can be discarded immediately as it would imply that $\De$ contained no lines. 

Case (2), on the other hand, implies that $\De$ is a $3-(n,4,1)$ design and, by counting as before, we conclude that $n=2\lambda+2$. Now Proposition~\ref{p: small lambda} implies that $\De=\De^b$, a Boolean quadruple system. 

Finally, for cases (3), (4) and (5), Lemma~\ref{l: reconstruct} implies that $\De$ is either $\De^a$ or $\De^\ep$.

\end{proof}

\bibliographystyle{amsalpha}
\bibliography{refs2}

\newcommand{\etalchar}[1]{$^{#1}$}
\providecommand{\bysame}{\leavevmode\hbox to3em{\hrulefill}\thinspace}
\providecommand{\MR}{\relax\ifhmode\unskip\space\fi MR }
\providecommand{\MRhref}[2]{%
  \href{http://www.ams.org/mathscinet-getitem?mr=#1}{#2}
}
\providecommand{\href}[2]{#2}
\begin{thebibliography}{CCN{\etalchar{+}}85}

\bibitem[Asc97]{aschbacher}
M.~Aschbacher, \emph{{$3$-transposition groups.}}, Cambridge: Cambridge
  University Press, 1997.

\bibitem[Ban86]{bang}
A.~S. Bang, \emph{Talteoretiske unders{\o}lgelser}, Tidskrifft Math. \textbf{5}
  (1886), no.~4, 130--137.

\bibitem[{Ban}69]{bannai}
E.~{Bannai}, \emph{{Multiply transitive permutation representations of finite
  symmetric groups}}, {J. Fac. Sci., Univ. Tokyo, Sect. I} \textbf{16} (1969),
  287--296.

\bibitem[CCN{\etalchar{+}}85]{atlas}
J.~H. Conway, R.~T. Curtis, S.~P. Norton, R.~A. Parker, and R.~A. Wilson,
  \emph{Atlas of finite groups}, Oxford University Press, 1985.

\bibitem[CH92]{cuypers}
H.~{Cuypers} and J.~I. {Hall}, \emph{{The classification of $3$-transposition
  groups with trivial center}}, {Groups, combinatorics and geometry.
  Proceedings of the L.M.S. Durham symposium, held July 5-15, 1990 in Durham,
  UK}, Cambridge: Cambridge University Press, 1992, pp.~121--138.

\bibitem[CKS76]{cks}
C.~W. {Curtis}, W.~M. {Kantor}, and G.~M. {Seitz}, \emph{{The 2-transitive
  permutation representations of the finite Chevalley groups}}, {Trans. Am.
  Math. Soc.} \textbf{218} (1976), 1--59.

\bibitem[Con97]{Co1}
J.~H. Conway, \emph{{$M_{13}$}}, Surveys in combinatorics, 1997 ({L}ondon),
  London Math. Soc. Lecture Note Ser., vol. 241, Cambridge Univ. Press,
  Cambridge, 1997, pp.~1--11.

\bibitem[DM96]{permutation}
J.~D. Dixon and B.~Mortimer, \emph{Permutation groups}, Graduate Texts in
  Mathematics, vol. 163, Springer-Verlag, 1996.

\bibitem[{Fis}71]{fischer}
Bernd {Fischer}, \emph{{Finite groups generated by 3-transpositions. I}},
  {Invent. Math.} \textbf{13} (1971), 232--246.

\bibitem[GGNS]{conwaygroupoids1}
N.~Gill, N.~I. Gillespie, A.~Nixon, and J.~Semeraro, \emph{Conway groupoids},
  2015, \url{http://arxiv.org/abs/1405.1701}.

\bibitem[GGS]{conwaygroupoids}
N.~Gill, N.~I. Gillespie, and J.~Semeraro, \emph{Conway groupoids and
  completely transitive codes}, 2015, \url{http://arxiv.org/abs/1410.4785}.

\bibitem[{Her}74]{hering}
C.~{Hering}, \emph{{Transitive linear groups and linear groups which contain
  irreducible subgroups of prime order}}, {Geom. Dedicata} \textbf{2} (1974),
  425--460.

\bibitem[{Her}85]{hering2}
\bysame, \emph{{Transitive linear groups and linear groups which contain
  irreducible subgroups of prime order. II}}, {J. Algebra} \textbf{93} (1985),
  151--164.

\bibitem[{Jan}05]{jansen}
C.~{Jansen}, \emph{{The minimal degrees of faithful representations of the
  sporadic simple groups and their covering groups}}, {LMS J. Comput. Math.}
  \textbf{8} (2005), 122--144.

\bibitem[KL90]{kleidmanliebeck}
P.~Kleidman and M.~W. Liebeck, \emph{The subgroup structure of the finite
  classical groups}, London Mathematical Society Lecture Note Series, vol. 129,
  Cambridge University Press, Cambridge, 1990.

\bibitem[{Lie}87]{liebeck}
M.~W. {Liebeck}, \emph{{The affine permutation groups of rank three}}, {Proc.
  Lond. Math. Soc. (3)} \textbf{54} (1987), 477--516.

\bibitem[LS74]{landazuriseitz}
V.~{Landazuri} and G.~M. {Seitz}, \emph{{On the minimal degrees of projective
  representations of the finite Chevalley groups}}, {J. Algebra} \textbf{32}
  (1974), 418--443.

\bibitem[LW91]{linton_wilson}
S.~A. {Linton} and R.~A. {Wilson}, \emph{{The maximal subgroups of the Fischer
  groups $Fi\sb{24}$ and $Fi'\sb{24}$}}, {Proc. Lond. Math. Soc. (3)}
  \textbf{63} (1991), no.~1, 113--164.

\bibitem[PSY15]{praegerseressyalcinkaya}
C.~E. Praeger, {\'A}.~Seress, and {\c{S}}.~Yal{\c{c}}{\i}nkaya,
  \emph{Generation of finite classical groups by pairs of elements with large
  fixed point spaces}, J. Algebra \textbf{421} (2015), 56--101.

\bibitem[Sei73]{Seidel}
J.~J. Seidel, \emph{{On two-graphs and Shult's characterization of symplectic
  and orthogonal geometries over GF (2)}}, {T.H.-Report 73-WSK-02. Eindhoven,
  Netherlands: Technological University, Dept. of Mathematics. 25 p. (1973).},
  1973.

\bibitem[Sei91]{Se}
\bysame, \emph{Geometry and combinatorics}, Academic Press, Inc., Boston, MA,
  1991, Selected works of J. J. Seidel, Edited and with a preface by D. G.
  Corneil and R. Mathon.

\bibitem[Shu72]{Sh}
E.~Shult, \emph{Characterizations of certain classes of graphs}, J.
  Combinatorial Theory Ser. B \textbf{13} (1972), 142--167.

\bibitem[{Tay}77]{taylor}
D.~E. {Taylor}, \emph{{Regular 2-graphs}}, {Proc. Lond. Math. Soc. (3)}
  \textbf{35} (1977), 257--274.

\bibitem[{Tay}92]{taylor2}
\bysame, \emph{Two-graphs and doubly transitive groups}, J. Combin. Theory Ser.
  A \textbf{61} (1992), no.~1, 113--122.

\bibitem[Zsi92]{zsig}
K.~Zsigmondy, \emph{Zur {T}heorie der {P}otenzreste}, Monatsh.\ Math.\ Phys.
  \textbf{3} (1892), no.~1, 265--284.

\end{thebibliography}

\end{document}